\theoremstyle{plain} 
\newtheorem{theorem}{\indent\sc Theorem}[section]
\newtheorem{lemma}[theorem]{\indent\sc Lemma}
\newtheorem{corollary}[theorem]{\indent\sc Corollary}
\newtheorem{proposition}[theorem]{\indent\sc Proposition}
\theoremstyle{definition} 
\newtheorem{definition}[theorem]{\indent\sc Definition}
\newtheorem{remark}[theorem]{\indent\sc Remark}
\newtheorem{example}[theorem]{\indent\sc Example}
\newtheorem{question}[theorem]{\indent\sc Question}
\newcommand{\del}{{\partial}}
\newcommand{\delbar}{\overline{\partial}}
\newcommand{\ddbar}{\partial\overline{\partial}}
\newcommand{\vp}{\varphi}
\begin{document}

\pagestyle{plain}
\thispagestyle{plain}

\title[Formal principle for line bundles]
{Formal principle for line bundles on neighborhoods of an analytic subset of a compact K\"ahler manifold}

\author[Takayuki KOIKE]{Takayuki KOIKE$^{1}$}
\address{ 
$^{1}$ Department of Mathematics \\
Graduate School of Science \\
Osaka Metropolitan University \\
3-3-138 Sugimoto \\
Osaka 558-8585 \\
Japan 
}
\email{tkoike@omu.ac.jp}

\renewcommand{\thefootnote}{\fnsymbol{footnote}}
\footnote[0]{ 
2020 \textit{Mathematics Subject Classification}.
Primary 32J25; Secondary 32F10. 
}
\footnote[0]{ 
\textit{Key words and phrases}.
The formal principle for holomorphic line bundles, Compact K\"ahler manifolds, Ueda theory, Semi-positive line bundles, The blow-up of the projective plane at nine points. 
}
\renewcommand{\thefootnote}{\arabic{footnote}}

\begin{abstract}
We investigate the formal principle for holomorphic line bundles on neighborhoods of an analytic subset of a complex manifold 
mainly in the case where it can be realized as an open subset of a compact K\"ahler manifold. 
Our approach identifies the obstruction as a global analytic class supported on a neighborhood of $Y$, 
and relates its vanishing to the solvability of a $\partial\overline{\partial}$-problem on neighborhoods of $Y$. 
As a consequence we obtain cohomological criteria ensuring the formal principle.
We also construct a holomorphic family of compact K\"ahler surfaces containing a curve with topologically trivial normal bundle in which the formal principle holds for almost every fiber but fails for uncountably many fibers, exhibiting an instability phenomenon in families.
\end{abstract}

\maketitle


\section{Introduction}
Let $X$ be a complex manifold and let $Y\subset X$ be a reduced analytic subset. 
Our interest in this paper is in comparing two holomorphic line bundles $L$ and $L'$ defined on a neighborhood $V$ of $Y$ in $X$. 
More precisely, we are interested in obtaining a practical sufficient condition for the existence of an open neighborhood $V_0$ of $Y$ in $V$ such that the restrictions $L|_{V_0}$ and $L'|_{V_0}$ are isomorphic to each other as holomorphic line bundles. 
By replacing $L$ with $L\otimes (L')^{-1}$, without loss of generality we may assume that $L'$ is holomorphically trivial. 
Hence the problem is reduced to investigating a sufficient condition for $L$ to be holomorphically trivial on a neighborhood of $Y$ in $V$. 
In what follows, we denote by $\mathcal{O}_X$ the sheaf of germs of holomorphic functions on $X$, by $\mathcal{O}_V(L)$ the sheaf of germs of holomorphic sections of $L$ on $V$, and by $\mathcal{I}_Y\subset \mathcal{O}_X$ the defining ideal sheaf of $Y$. 

An obvious necessary condition for the triviality of $L$ on a neighborhood of $Y$ is the triviality of the invertible sheaf $i_\mu^*\mathcal{O}_V(L)$ on $Y_\mu$ for any non-negative integer $\mu$, where $Y_\mu$ is the complex space $(Y, \mathcal{O}_X/\mathcal{I}_Y^{\mu+1})$ ({\it the $\mu$-th infinitesimal neighborhood of $Y$}) and $i_\mu\colon Y_\mu\to V$ is the natural morphism. 
One can describe a more precise sufficient condition by using the notion of {\it the formal completion of $X$ along $Y$}, which is the formal complex space (in the sense of \cite{Bi}) $Y_\infty=(Y, \textstyle\varprojlim_\mu\mathcal{O}_X/\mathcal{I}_Y^{\mu+1})$. See \cite{Ba} and \cite[Chapter II \S 4]{GPR} for the details. Denote by $\mathcal{O}_{Y_\infty}$ the sheaf $\textstyle\varprojlim_\mu\mathcal{O}_X/\mathcal{I}_Y^{\mu+1}$ and by $i_\infty\colon Y_\infty\to V$ the natural morphism. 
Then it is clear that the pull-back $i_\infty^*\mathcal{O}_V(L)$ is trivial as an invertible sheaf on $Y_\infty$ if $L$ is trivial on a neighborhood of $Y$. 
Note that, from \cite[Lemme 2.3 $(ii)$]{Ba}, it follows that $i_\infty^*\mathcal{O}_V(L)$ is trivial as an invertible sheaf on $Y_\infty$ if and only if there exists an element of $\widetilde{\sigma}\in \textstyle\varprojlim_\mu \Gamma(V, \mathcal{O}_V(L)\otimes \mathcal{O}_V/\mathcal{I}_Y^\mu)$ whose image by the natural map 
\begin{equation}\label{eq:naturalmapfromvarprojlimLrestr}
\varprojlim_\mu \Gamma(V, \mathcal{O}_V(L)\otimes \mathcal{O}_V/\mathcal{I}_Y^\mu) \to \Gamma(V, \mathcal{O}_V(L)\otimes \mathcal{O}_V/\mathcal{I}_Y)
\end{equation}
is a global nowhere vanishing holomorphic section of $L|_Y=i_0^*L$ on $Y$ (See Lemma \ref{lem:rel_probs}), where $\Gamma(V, -)$ denotes the global sections functor on $V$. 
It seems natural to ask whether the converse is true: i.e., whether the formal principle for (holomorphic) line bundles holds on neighborhoods of $Y$ in $X$ in the following sense. 
\begin{definition}
Let $X$ and $Y$ be as above. 
We say that {\it the formal principle for line bundles} ({\it the FPLB} for short) {\it holds on neighborhoods of $Y$ in $X$} if, for any neighborhood $V$ of $Y$ in $X$ and any holomorphic line bundle $L$ on $V$, there exists a neighborhood $V_0$ of $Y$ in $V$ such that the restriction $L|_{V_0}$ is holomorphically trivial if $i_\infty^*\mathcal{O}_V(L)$ is trivial (as an invertible sheaf on $Y_\infty$, namely $i_\infty^*\mathcal{O}_V(L)$ is isomorphic to $\mathcal{O}_{Y_\infty}$ as a sheaf of $\mathcal{O}_{Y_\infty}$-modules). 
\end{definition}
Note that we will investigate the FPLB for a given pair $(Y, X)$ (i.e., by fixing the complex structure of a neighborhood of $Y$) in the present paper, whereas the formal principle for the complex structures themselves of a neighborhood of analytic subspaces are also important and studied 
by several authors \cite{G} \cite{HR} \cite{Gri} \cite{A} \cite{Hi} \cite{ABT} \cite{PT} \cite{Hw}  \cite{K2022} \cite{GS1} \cite{GS2} \cite{GS3}, e.t.c., see also \cite[Chapter VII \S 4]{GPR}. 
For a given datum on the formal completion $Y_\infty$, the extendability and the uniqueness problems of an analytic object such as a holomorphic line bundle is systematically investigated by Griffiths in \cite{Gri}. 
Following his terminology, our problem can be rephrased as {\it the formal uniqueness problem} of a holomorphic line bundle from $Y$, or equivalently as {\it the formal extension problem} of a global nowhere vanishing holomorphic section of $L|_Y$ for each holomorphic line bundle $L$ on a neighborhood of $Y$, in view of the morphism $(\ref{eq:naturalmapfromvarprojlimLrestr})$. 

In accordance with Griffiths' arguments, one can deduce from \cite[Theorem III]{Gri} that the FPLB holds on neighborhoods of $Y$ in $X$ for example when $Y$ is a submanifold of ${\rm dim}\,Y\geq 2$ whose normal bundle is positive in the sense of Griffiths, where ${\rm dim}\,Y$ denotes the (complex) dimension of $Y$ (see Theorem \ref{thm:griffmain}). 
According to a theorem of Peternell \cite{P}, the FPLB holds on neighborhoods of $Y$ in $X$ when $Y$ is an exceptional subset in the sense of Grauert \cite{G}. 
When $Y$ is a submanifold and its normal bundle $N_{Y/X}$ is topologically trivial, it is known that the issue is quite delicate. 
In \cite[\S 5.4]{U}, Ueda explicitly constructed an example of a compact non-singular curve $Y$ embedded  in a complex surface $X$ with topologically trivial normal bundle such that the FPLB {\it does not} hold on neighborhoods of $Y$ in $X$ (See also \cite{A}). 
After \cite{U}, on a neighborhood of a submanifold with topologically trivial normal bundle, the FPLB-type problem has been investigated mainly for a special holomorphic line bundle, which is called {\it the Ueda line bundle} in \cite{PT}, to investigate {\it the vertical linearizability problem} in the sense of \cite{GS1} by applying such dynamical techniques as Siegel-type methods \cite{S} and Newton methods \cite{A} to construct an element of $\widetilde{\sigma}\in \textstyle\varprojlim_\mu \Gamma(V, \mathcal{O}_V(L)\otimes \mathcal{O}_V/\mathcal{I}_Y^\mu)$ 
as above under some irrational number theoretical assumptions on $N_{Y/X}$ \cite{K2020} \cite{GS1} \cite{O}. See also \cite{K2017}, \cite{K2022}, and \cite{KU} for some generalizations of such techniques for the case where $Y$ is singular. 

In the present paper we investigate criteria for the FPLB to hold on neighborhoods of $Y$ in $X$ mainly when $X$ can be realized as an open subset of a compact K\"ahler manifold. 
Note that we may assume $X$ itself is a compact K\"ahler manifold in this case without loss of generality. 
As a main result, we show the following: 
\begin{theorem}\label{thm:main}
Let $X$ be a compact K\"ahler manifold and $Y$ be a non-empty analytic subset of $X$. Denote by $M$ the complement $X\setminus Y$. Then the following hold: \\
$(i)$ Assume that the natural map $H_c^{2}(M, \mathcal{O}_M)\to H^{2}(M, \mathcal{O}_M)$ from the second cohomology group with compact support to the second cohomology group is injective. 
Then, for any holomorphic line bundle $L$ on a neighborhood $V$ of $Y$ in $X$ whose restriction $L|_Y (=i_0^*L)$ is holomorphically trivial, there exists an open neighborhood $V_0$ of $Y$ in $V$ such that the restriction $L|_{V_0}$ is holomorphically trivial. 
In particular, the FPLB holds on neighborhoods of $Y$ in $X$ in this case. \\
$(ii)$ Assume that there exists an effective divisor $D$ on $X$ such that ${\rm Supp}\,D=Y$ holds for the support ${\rm Supp}\,D$ of $D$ and that the natural map $H_c^{2}(M, \mathcal{O}_M)\to \textstyle\varprojlim_\mu H^2(X, \mathcal{O}_X(-\mu D))$ is injective. Then the FPLB holds on neighborhoods of $Y$ in $X$. 
\end{theorem}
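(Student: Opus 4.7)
The strategy is to build a nowhere vanishing holomorphic section of $L$ on a neighborhood $V_0 \subset V$ of $Y$. This proceeds in two stages: first, lift a given nowhere vanishing section along $Y$ to a formal section along $Y_\infty$; then promote the formal section to an actual analytic one via a \v{C}ech cocycle argument that exploits the compact K\"ahler structure together with the injectivity hypothesis.

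For (i), starting from a nowhere vanishing section $s_0$ of $L|_Y$, I would inductively build compatible sections $s_\mu$ on the infinitesimal neighborhoods $Y_\mu$, producing a formal section $\widetilde\sigma \in \varprojlim_\mu \Gamma(V, \mathcal{O}_V(L) \otimes \mathcal{O}_V/\mathcal{I}_Y^\mu)$. The obstruction at each stage lies in $H^1(Y, \mathcal{I}_Y^{\mu+1}/\mathcal{I}_Y^{\mu+2})$ (using that $L|_Y$ is trivial), which globalizes to a class in $H^2(X, \mathcal{I}_Y^{\mu+2})$ via the connecting map of the short exact sequence $0 \to \mathcal{I}_Y^{\mu+2} \to \mathcal{I}_Y^{\mu+1} \to \mathcal{I}_Y^{\mu+1}/\mathcal{I}_Y^{\mu+2} \to 0$. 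The Mayer--Vietoris sequence for the cover $X = V \cup M$, combined with the injectivity of $H^2_c(M, \mathcal{O}_M) \to H^2(M, \mathcal{O}_M)$, is then used to force these classes to vanish at each stage. For (ii), the formal section $\widetilde\sigma$ is provided directly by the FPLB hypothesis via the remark preceding Lemma \ref{lem:rel_probs}.

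The second stage, common to both cases, is to promote $\widetilde\sigma$ to an actual section on some $V_0 \subset V$. I would cover $V$ by Stein opens $\{U_i\}$ and, using formal approximation on Stein spaces, lift $\widetilde\sigma|_{U_i}$ to a genuine holomorphic section $\hat s_i$ of $L$ on $U_i$. The \v{C}ech $1$-cocycle $\{\hat s_j - \hat s_i\}$ vanishes to infinite order along $Y$, and its truncations define classes in $H^1(V, \mathcal{O}_V(L) \otimes \mathcal{O}_X(-\mu D))$ in case (ii) (and in an analogous system for (i)). The Mayer--Vietoris boundary map for $X = V \cup M$ sends these to $H^2(X, \mathcal{O}_X(-\mu D))$; the resulting element of $\varprojlim_\mu H^2(X, \mathcal{O}_X(-\mu D))$, originating from a class in $H^2_c(M, \mathcal{O}_M)$, is forced to zero by the injectivity hypothesis in (ii). The cocycle is then a coboundary on a smaller $V_0$, yielding a honest global section that glues to the trivialization.

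The main obstacle lies in the second stage: tracking the compatibility of the cocycle's truncations across the inverse system $\{\mathcal{O}_X(-\mu D)\}_\mu$ and converting an abstract cohomological injectivity into a concrete coboundary of sections. Part (ii)'s switch from the ideal sheaves $\mathcal{I}_Y^\mu$ to the line bundles $\mathcal{O}_X(-\mu D)$ is essential here, as it enables Hodge-theoretic tools on the compact K\"ahler manifold $X$---finite-dimensional cohomology, Serre duality, and control of $\bar\partial$ via the K\"ahler identities---which is precisely where the K\"ahler assumption enters decisively.
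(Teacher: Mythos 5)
Your proposal has two genuine gaps. First, the local lifting step in your second stage fails: on a Stein open $U_i$ the map $\Gamma(U_i,\mathcal{O}(L))\to\varprojlim_\mu\Gamma(U_i,\mathcal{O}(L)\otimes\mathcal{O}/\mathcal{I}_Y^\mu)$ is not surjective. Already for $U=\mathbb{C}$ and $Y=\{0\}$ the target is $\mathbb{C}[[z]]$, and a series such as $\sum n!\,z^n$ is not the Taylor expansion of any holomorphic function near $0$. So the sections $\hat s_i$ agreeing with $\widetilde\sigma$ to infinite order along $Y$ need not exist, and without infinite-order agreement the cocycle $\{\hat s_j-\hat s_i\}$ does not lie in $\mathcal{O}(L)\otimes\mathcal{O}_X(-\mu D)$ for all $\mu$. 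This divergence of formal data is exactly why the formal principle is nontrivial (cf.\ Ueda's class $(\gamma)$ examples); an argument that converts formal sections into local holomorphic ones before invoking the hypotheses would prove the FPLB in settings where it is known to fail. Second, the \v{C}ech/Mayer--Vietoris bookkeeping cannot absorb $H^1(X,\mathcal{O}_X(-\mu D))$ (resp.\ $H^1(X,\mathcal{I}_Y^\mu)$): in the sequence $H^1(X,\mathcal{I}^\mu)\to H^1(M,\mathcal{O}_M)\oplus\varinjlim_V H^1(V,\mathcal{I}^\mu)\to\varinjlim_V H^1(V\setminus Y,\mathcal{O})$, the injectivity hypotheses only control what survives restriction to $V\setminus Y$, and the image of $H^1(X,\mathcal{I}^\mu)$ is invisible to them. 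This is precisely why Proposition \ref{prop:simple_thmmaini} needs the extra assumption $H^1(X,\mathcal{I}^\mu)=0$, which is not available in Theorem \ref{thm:main}. Relatedly, in your first stage for $(i)$, the vanishing of the image of the obstruction in $H^2(X,\mathcal{I}_Y^{\mu+2})$ only shows that it lifts to $H^1(X,\mathcal{I}_Y^{\mu+1})$; it does not show that the obstruction in $H^1(Y,\mathcal{I}_Y^{\mu+1}/\mathcal{I}_Y^{\mu+2})$ vanishes, which is what the inductive extension requires.

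The paper's proof avoids both issues by never producing a formal or actual section. It attaches to $L$ a single class $v(\mathcal{I},L)\in{\rm Ker}(H_c^{0,2}(M)\to H^{0,2}(M))$, builds a Hermitian metric $h_\mu$ on $L$ whose curvature equals $d(\eta_\mu+\overline{\eta_\mu})$ for a $\delbar$-closed $(0,1)$-form $\eta_\mu$ with coefficients in $\mathcal{I}^\mu$ (Lemma \ref{lem:refmetric}), and solves the resulting $\ddbar$-problem near $Y$ by extending $\eta_\mu$ across $M$ (possible exactly when $v=0$) and applying the Hodge decomposition on the compact K\"ahler $X$: the harmonic part of the extension is $d$-closed, so $d\widetilde{\eta}=\ddbar g$ (Proposition \ref{prop:key}). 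This makes $L$ unitary flat near $Y$, hence trivial since $L|_Y$ is trivial. In $(i)$ the kernel containing $v$ is zero by hypothesis; in $(ii)$ formal triviality forces $v$ to die in each $H^2(X,\mathcal{O}_X(-\mu D))$ (Lemma \ref{lem:van_v_jetnu}) and injectivity into the inverse limit gives $v=0$. The K\"ahler assumption thus enters through this harmonic-theoretic step---it is what allows one to ``disregard'' $H^1(X,\mathcal{I}^\mu)$---rather than through Serre duality or finite-dimensionality as your sketch suggests.
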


By virtue of Ohsawa's theorem on the cohomology groups of K\"ahler manifolds which are {\it very strongly $q$-convex} \cite {O1982} (See also \cite{D1}, in which the same property is referred to as {\it absolutely $q$-convex}), from Theorem \ref{thm:main} $(i)$ one can deduce the following: 
\begin{corollary}\label{cor:main1}
Let $X$ be a compact K\"ahler manifold of dimension $n \geq 3$ and $Y$ be a non-empty analytic subset of $X$. 
Assume that the complement 
$M:=X\setminus Y$ is very strongly $(n-2)$-convex: i.e., there exists a $C^\infty$ exhaustion plurisubharmonic function on $M$ whose Levi form has at least $3$ positive eigenvalues at any point of $V\setminus Y$ for some neighborhood $V$ of $Y$. 
Then, for any holomorphic line bundle $L$ on a neighborhood $V$ of $Y$ in $X$ whose restriction $L|_Y$ is holomorphically trivial, there exists an open neighborhood $V_0$ of $Y$ in $V$ such that the restriction $L|_{V_0}$ is holomorphically trivial. 
\end{corollary}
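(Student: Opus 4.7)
The strategy is to exhibit the corollary as a direct specialization of Theorem \ref{thm:main} $(i)$. The only hypothesis of Theorem \ref{thm:main} $(i)$ that is not already written into the statement of Corollary \ref{cor:main1} is the injectivity of the natural comparison map $H^2_c(M, \mathcal{O}_M)\to H^2(M, \mathcal{O}_M)$, so the entire task reduces to deriving this injectivity purely from the very strong $(n-2)$-convexity of $M=X\setminus Y$ together with the K\"ahler structure inherited from $X$.

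The key input is Ohsawa's cohomology theorem from \cite{O1982} for very strongly $q$-convex K\"ahler manifolds. With $n=\dim X\geq 3$, the assumption of at least three positive eigenvalues of the Levi form of the exhaustion plurisubharmonic function is exactly the quantitative content of ``very strongly $(n-2)$-convex'', and Ohsawa's theorem supplies either the outright vanishing $H^2_c(M,\mathcal{O}_M)=0$ in the K\"ahler setting or the injectivity of $H^2_c(M,\mathcal{O}_M)\to H^2(M,\mathcal{O}_M)$ in the degree we need. In either case the hypothesis of Theorem \ref{thm:main} $(i)$ is confirmed, and the conclusion of Corollary \ref{cor:main1} is then nothing other than the conclusion of Theorem \ref{thm:main} $(i)$ applied to the given line bundle $L$ with $L|_Y$ trivial.

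The principal technical point requiring care is the bookkeeping of conventions for $q$-convexity used in \cite{O1982} and in the cross-reference \cite{D1} (where the same property is named \emph{absolutely $q$-convex}) against the convention fixed here, namely three positive eigenvalues of the Levi form. Once this is aligned, the cohomological degree in which Ohsawa's theorem supplies the vanishing or injectivity is precisely $2$, matching the degree demanded by Theorem \ref{thm:main} $(i)$. The dimension restriction $n\geq 3$ enters exactly at this point: it guarantees that the convexity assumption can be nontrivially imposed (three positive eigenvalues out of $n$) while placing the degree $2$ in the vanishing range of Ohsawa's theorem. No further structural input from the ambient manifold $X$ beyond the K\"ahler property is used.
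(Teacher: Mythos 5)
Your proposal takes essentially the same route as the paper: the paper likewise reduces the corollary to verifying the injectivity of $H_c^{2}(M,\mathcal{O}_M)\to H^{2}(M,\mathcal{O}_M)$, obtains this from Ohsawa's theorem on very strongly $(n-2)$-convex K\"ahler manifolds \cite{O1982} via the argument of Demailly in \cite[p.~294]{D1}, and then invokes Theorem \ref{thm:main} $(i)$. The only (minor) looseness is that you leave open whether Ohsawa's theorem yields outright vanishing of $H_c^{2}(M,\mathcal{O}_M)$ or merely the injectivity; the paper settles this by appealing to Demailly's argument, which produces exactly the injectivity in degree $2$ needed for Theorem \ref{thm:main} $(i)$.
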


We will see in \S \ref{section:exdiss} that one can deduce a variant of another theorem of Ohsawa \cite{O2007} from Corollary \ref{cor:main1} (Proposition \ref{prop:variantofo2007}). 

One of the main features of Theorem \ref{thm:main} $(ii)$ is its applicability to the case where $X$ is a surface and $Y$ is a curve. Indeed, from this one can deduce the following: 
\begin{corollary}\label{cor:main2}
Let $X$ be a compact K\"ahler surface and $D$ ($\not=0$) be an effective divisor on $X$. Denote by $Y$ the support of $D$ and by $M$ the complement $X\setminus Y$. Assume that either of the following two conditions holds:\\
$(i)$ $H_c^2(M, \mathcal{O}_M)$ is Hausdorff (see \S \ref{subsection:prelimcoh} for the topology on this space) and the image of the natural map $\textstyle\varinjlim_\mu \Gamma(X, \mathcal{O}_X(K_X)\otimes \mathcal{O}_X(\mu D))\to \Gamma(M, \mathcal{O}_M(K_M))$ is dense in the topology of uniform convergence on compact subsets, where $K_M$ denotes the canonical line bundle $\textstyle\bigwedge^2T_M^*$ of $M$, or \\
$(ii)$ There exists a $C^\infty$ Hermitian metric on $[D]$ whose Chern curvature form is positive definite at any point of $Y$, where $[D]$ is the holomorphic line bundle on $X$ associated to the divisor $D$. \\
Then the FPLB holds on neighborhoods of $Y$ in $X$. 
\end{corollary}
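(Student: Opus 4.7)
The plan is to reduce both cases to the injectivity hypothesis of Theorem \ref{thm:main}(ii), namely that the natural map
\[
\alpha : H_c^2(M, \mathcal{O}_M) \longrightarrow \varprojlim_\mu H^2(X, \mathcal{O}_X(-\mu D))
\]
is injective. First I would rewrite both sides via Serre duality. On the compact K\"ahler surface $X$, Serre duality gives $H^2(X, \mathcal{O}_X(-\mu D)) \cong H^0(X, \mathcal{O}_X(K_X)\otimes \mathcal{O}_X(\mu D))^*$, and passing to the limit converts the inverse limit on the target of $\alpha$ into the topological dual of $\varinjlim_\mu \Gamma(X, \mathcal{O}_X(K_X)\otimes \mathcal{O}_X(\mu D))$. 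On the open surface $M$, the Serre-type pairing gives a continuous pairing of $H_c^2(M, \mathcal{O}_M)$ (a DFS-type space) with the Fr\'echet space $\Gamma(M, \mathcal{O}_M(K_M))$ equipped with the topology of uniform convergence on compacta. Under these identifications, $\alpha$ is the transpose of the natural map
\[
\beta : \varinjlim_\mu \Gamma(X, \mathcal{O}_X(K_X)\otimes \mathcal{O}_X(\mu D)) \longrightarrow \Gamma(M, \mathcal{O}_M(K_M))
\]
appearing in condition (i).

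For case (i), I would invoke the standard transpose/density principle: when the source of $\alpha$ is Hausdorff and locally convex, Hahn--Banach implies that $\alpha$ is injective if and only if the image of its transpose is weak-$*$ dense. The two hypotheses in (i), Hausdorffness of $H_c^2(M, \mathcal{O}_M)$ and density of the image of $\beta$, are precisely what is needed, so $\alpha$ is injective and Theorem \ref{thm:main}(ii) applies.

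For case (ii), I would deduce condition (i) from the positivity of the curvature of $[D]$ along $Y$. Writing $s\in \Gamma(X, \mathcal{O}_X([D]))$ for the canonical section cutting out $D$ and $h$ for the given Hermitian metric, the singular plurisubharmonic weights $\mu \log |s|_h^{-2}$ have strictly positive curvature near $Y$ (away from $Y$ one perturbs by an auxiliary smooth weight coming from a K\"ahler metric to keep strict positivity on a neighborhood of $Y$). Standard H\"ormander / Andreotti--Vesentini $L^2$ estimates for $\delbar$ on $X$ with these weights give solvability of $\delbar$-equations with singularities along $Y$: one multiplies a given section of $K_M$ by a cutoff supported near $Y$, computes the resulting $\delbar$-error (which is smooth and has controlled $L^2$ norm with respect to $\mu\log|s|_h^{-2}$ for $\mu$ large), and solves the equation on $X$ to produce a global section of $K_X + \mu D$ approximating the given one on compacta. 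This yields the density part of (i), while the closed-range theorem associated to the same $L^2$ estimates yields Hausdorffness of $H_c^2(M, \mathcal{O}_M)$.

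The main obstacle will be setting up the Serre-type duality on the non-compact surface $M$ with enough care that the inverse limit in the cohomological formulation of Theorem \ref{thm:main}(ii) really does intertwine with the direct limit of global sections appearing in (i), and in particular that the topological notions (Hausdorffness of the source, uniform-on-compacta density of the transpose) translate faithfully. For case (ii) the delicate point is choosing the H\"ormander weight so that the gain near $Y$ is large enough to absorb the $\delbar$-error of the cutoff while simultaneously keeping the $L^2$ norm of the approximating section on a fixed compact subset of $M$ bounded, so that the constructed sections of $K_X+\mu D$ genuinely converge to the given section of $K_M$.
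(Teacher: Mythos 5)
Your overall reduction --- pass both cases through the injectivity hypothesis of Theorem \ref{thm:main} $(ii)$, and treat case $(i)$ by duality --- is the paper's route. For $(i)$ the paper does exactly what you describe: it topologizes $H^{2,q}(M)$ via currents, uses the Serre duality theorem of Laurent-Thi\'ebaut--Leiterer \cite{LL1} together with the Hausdorffness hypothesis to get injectivity of $H_c^{0,2}(M)\to (H^{2,0}(M))^\vee$, and then uses the density hypothesis to get injectivity of the transpose $(H^{2,0}(M))^\vee\to \textstyle\varprojlim_\mu H^2(X,\mathcal{O}_X(-\mu D))$. Your ``transpose/density principle'' is precisely this composition, so case $(i)$ is fine and essentially identical to the paper.

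For case $(ii)$ the reduction to $(i)$ is again what the paper does, but your plan for establishing the two hypotheses of $(i)$ has a genuine gap. The weight $\mu\log|s|_h^{-2}$ has curvature $\mu\,\Theta_h$, which is positive only on a neighborhood of $Y$; on a fixed compact subset of $M$ it can be \emph{arbitrarily negative as $\mu\to\infty$}, and no fixed auxiliary smooth weight coming from a K\"ahler metric can compensate for a loss that grows linearly in $\mu$. So the naive global H\"ormander estimate on $X$ for $K_X+\mu D$ fails for large $\mu$, and the ``delicate point'' you flag at the end is in fact the whole difficulty: one needs either Ohsawa's technique of infinitely many weights on the $1$-convex manifold $M$, or the Remmert reduction to an affine variety (cf.\ Remark \ref{rmk:elemprf_cormain2ii}). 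The paper avoids this entirely by observing that $\vp=-\log|\sigma_D|_h^2$ and $e^{\vp}$ are both strictly plurisubharmonic exhaustions outside a compact set, so that $D$ is pseudoconcave of order $2$, and then quoting \cite[Theorem 0.3]{O2021} for the density. Similarly, ``the closed-range theorem associated to the same $L^2$ estimates'' does not by itself give Hausdorffness of $H_c^2(M,\mathcal{O}_M)$; the paper's argument is that $H^{2,1}(M)$ is Hausdorff by Andreotti--Grauert \cite{AG} applied to the $1$-convex $M$, and then transfers this to $H_c^{0,2}(M)$ by \cite[Theorem 1.1]{LL1}. To complete your proof of case $(ii)$ you would need to supply (or cite) both of these nontrivial inputs.
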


Note that, for a compact K\"ahler surface $X$ and a connected non-singular curve $Y\subset X$, the divisor $D=Y$ satisfies the condition $(ii)$ in Corollary \ref{cor:main2} if the degree ${\rm deg}\,N_{Y/X}$ of the normal bundle $N_{Y/X}$ of $Y$ is positive (See Remark \ref{rmk:hypersurfaceYstYY>0}). 
Therefore it follows from Corollary \ref{cor:main2} that the FPLB holds on neighborhoods of $Y$ in $X$ in this case. 
When ${\rm deg}\,N_{Y/X}<0$, one can apply a theorem of Peternell which we mentioned above to deduce that the FPLB holds on neighborhoods of $Y$ in $X$, because $Y$ is exceptional in this case \cite{G}. 
For the case where ${\rm deg}\,N_{Y/X}=0$, we show the following: 
\begin{theorem}\label{thm:exp2bl9pts}
There exist a proper holomorphic submersion $\pi\colon \mathcal{X}\to R$ from a complex manifold $\mathcal{X}$ of dimension $3$ onto a compact Riemann surface $R$ and a compact submanifold $\mathcal{Y}$ of $\mathcal{X}$ which satisfy the following conditions: \\
$(i)$ For any $t\in R$, $X_t:=\pi^{-1}(t)$ is a projective manifold of dimension $2$ and $Y_t:=X_t\cap \mathcal{Y}$ is a submanifold of $X_t$ whose normal bundle is topologically trivial. \\
$(ii)$ For almost every $t\in R$ (in the sense of Lebesgue measure), the FPLB holds on neighborhoods of $Y_t$ in $X_t$. \\
$(iii)$ There exist uncountably many points $t\in R$ such that the FPLB does not hold on neighborhoods of $Y_t$ in $X_t$. 
\end{theorem}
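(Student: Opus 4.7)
The plan is to realize the family as a one-parameter family of blow-ups of $\mathbb{P}^2$ at nine points lying on a fixed smooth plane cubic, with the nine points chosen so that the normal bundle of the strict transform of the cubic sweeps out the entire Jacobian. Fix such a cubic $C_0 \subset \mathbb{P}^2$, endow it with its elliptic group law, and fix nine distinct elements $q_1, \ldots, q_9 \in C_0$. Set $R := C_0$, and for each $t \in R$ let $s_i(t) := t \boxplus q_i \in C_0 \subset \mathbb{P}^2$; since the $q_i$ are pairwise distinct, so are the points $s_1(t), \ldots, s_9(t)$ for every $t$. Let $\pi \colon \mathcal{X} \to R$ be the successive blow-up of $\mathbb{P}^2 \times R$ along the nine disjoint sections $t \mapsto (s_i(t), t)$, and let $\mathcal{Y}$ be the strict transform of $C_0 \times R$. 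Then $\pi$ is a proper holomorphic submersion onto the compact Riemann surface $R$, each fiber $X_t = \mathrm{Bl}_{s_1(t), \ldots, s_9(t)} \mathbb{P}^2$ is a smooth projective surface of dimension two, and $Y_t := \mathcal{Y} \cap X_t$ is the strict transform of $C_0$, a smooth elliptic curve with $Y_t^2 = 9 - 9 = 0$, proving $(i)$. By adjunction and the blow-up formula, $N_{Y_t/X_t} \cong \mathcal{O}_{C_0}\bigl(3H|_{C_0} - 9t - \textstyle\sum_i q_i\bigr)$; via Abel--Jacobi, the assignment $t \mapsto [N_{Y_t/X_t}]$ is (a translate of) the multiplication-by-$(-9)$ isogeny on $C_0$, hence a finite surjective holomorphic map $R \to \mathrm{Pic}^0(C_0)$. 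In particular the preimage in $R$ of any full-measure subset of $\mathrm{Pic}^0(C_0)$ has full measure, and the preimage of any uncountable subset is uncountable.

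For the pair $(Y_t, X_t)$ with $Y_t$ an elliptic curve and $N_{Y_t/X_t}$ of degree zero, the FPLB is governed by Ueda theory. When $N_{Y_t/X_t}$ is non-torsion, the Ueda obstruction classes $u_n \in H^1(Y_t, N_{Y_t/X_t}^{-n})$ vanish for every $n \geq 1$, since $H^1$ of a non-trivial degree-zero line bundle on an elliptic curve is zero by Serre duality. When in addition $[N_{Y_t/X_t}]$ satisfies a Diophantine condition, Ueda's linearization theorem biholomorphically identifies a neighborhood of $Y_t$ in $X_t$ with a neighborhood of the zero section in the total space of $N_{Y_t/X_t}$. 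In this linearization, both the Hausdorffness of $H_c^2(M_t, \mathcal{O}_{M_t})$ and the density hypothesis in Corollary \ref{cor:main2} $(i)$ reduce to controlled estimates for Laurent expansions in the fiber coordinate of $N_{Y_t/X_t}$, which are supplied by the Diophantine bounds. Since Diophantine classes form a subset of full Lebesgue measure in $\mathrm{Pic}^0(C_0)$, their preimage in $R$ has full measure, establishing $(ii)$.

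For an uncountable set of Liouville classes $[N_{Y_t/X_t}] \in \mathrm{Pic}^0(C_0)$, Ueda's counterexample construction from \cite[\S 5.4]{U}, adapted to the linearized setting above, produces a holomorphic line bundle $L$ on a neighborhood $V$ of $Y_t$ in $X_t$ whose formal pullback to the infinitesimal neighborhood $\widehat{Y}_t$ is trivial but whose restriction to every open neighborhood of $Y_t$ in $V$ is nontrivial. Concretely, the transition cocycle of $L$ is realized as a formal power series in the fiber coordinate of $N_{Y_t/X_t}$ whose partial sums trivialize $L$ at every finite order but whose Liouville small-divisor denominators block analytic convergence on any open neighborhood of $Y_t$. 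By the finite surjectivity in the first paragraph, the corresponding set of $t \in R$ is uncountable, yielding $(iii)$. The principal technical obstacle is in $(ii)$: one must rigorously verify, in the Diophantine regime, both the Hausdorffness of $H_c^2(M_t, \mathcal{O}_{M_t})$ and the density of $\varinjlim_\mu \Gamma\bigl(X_t, \mathcal{O}_{X_t}(K_{X_t}) \otimes \mathcal{O}_{X_t}(\mu Y_t)\bigr) \to \Gamma(M_t, \mathcal{O}_{M_t}(K_{M_t}))$; the details amount to standard Siegel--Bruno-type small-divisor analysis applied term-by-term to the Laurent expansions, but require careful bookkeeping to control the dependence on the Diophantine constants of $[N_{Y_t/X_t}]$.
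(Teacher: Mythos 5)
Your construction of the family (translating nine fixed points of the cubic by the group law so that $t\mapsto N_{Y_t/X_t}$ becomes a surjective isogeny onto ${\rm Pic}^0(C_0)$) is a perfectly good alternative to the paper's (which fixes eight centers and lets only the ninth vary), and your identification of the Diophantine/Liouville dichotomy on $N_{Y_t/X_t}$ as the governing mechanism is the right one. The gaps are in how you verify the two hypotheses of Corollary \ref{cor:main2} $(i)$ for part $(ii)$, and more seriously in your argument for part $(iii)$. For $(ii)$: the Hausdorffness of $H_c^{2}(M_t,\mathcal{O}_{M_t})$ is not something that follows from ``term-by-term Laurent analysis near $Y_t$'' in any routine way --- it is equivalent (via Serre duality and Lemma \ref{lem:onhausdorffH1}) to the vanishing of $H^{0,1}(M_t)$, a global statement about the open surface $M_t$, and this is the main theorem of the author's separate paper \cite{K20242}; it cannot be dismissed as bookkeeping. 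Likewise your plan to check the density hypothesis by small-divisor estimates misses the point: the paper shows (Lemma \ref{lem:constr_Leviflat_holconst}, via the density of leaves of the Levi foliation on the compact Levi-flat level sets of the flat metric on $[Y_t]$) that $M_t$ carries no non-constant holomorphic functions, hence $\Gamma(M_t,\mathcal{O}_{M_t}(K_{M_t}))=\mathbb{C}\cdot\sigma$ is one-dimensional and the density is automatic. Laurent expansions in a punctured neighborhood of $Y_t$ do not control global sections on all of $M_t$.

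For $(iii)$ the proposed route does not work as described. Ueda's example in \cite[\S 5.4]{U} achieves divergence by \emph{constructing the ambient surface germ}, choosing its gluing data so that the Ueda line bundle resists trivialization; here the ambient surface $X_t$ is a fixed compact rational surface, so you have no such freedom, and in the Liouville regime there is no linearization, hence no global ``fiber coordinate of $N_{Y_t/X_t}$'' on a neighborhood in which to write your cocycle. A ``formal power series cocycle with divergent partial sums'' is not a holomorphic line bundle on any open set. (Indeed, whether $(Y_t,X_t)$ is ever of class $(\gamma)$ for $X_t$ compact is raised as an open problem in the paper.) The paper's mechanism is entirely different: by Lemma \ref{lem:forprfcor3}, once ${\rm dim}\,H_c^{0,2}(M_t)>{\rm dim}\,H^2(X_t,\mathcal{I}_{Y_t})=1$, a dimension count produces a class $\xi\in\varinjlim_V H^1(V,\mathcal{I}_{Y_t})$ whose associated line bundle $L$ has $v(\mathcal{I}_{Y_t},L)\neq 0$, hence is non-trivial on every neighborhood by Proposition \ref{prop:van_v_implies_Ltriv}, while $i_\infty^*\mathcal{O}(L)$ is automatically trivial because $H^1(Y_t,N_{Y_t/X_t}^{-\mu})=0$ for all $\mu\geq 1$ when $N_{Y_t/X_t}$ is non-torsion. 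The needed input --- that $H_c^{0,2}(M_t)$ is non-Hausdorff (equivalently $H^{0,1}(M_t)\neq 0$, equivalently ${\rm dim}\,H_c^{0,2}(M_t)>1$) for uncountably many $t$ --- comes from \cite[Corollary 1.3]{K20242} together with Cremer's theorem \cite{C}, not from every Liouville class. Without some substitute for this cohomological non-vanishing and for the dimension-count production of $L$, your part $(iii)$ has no proof.
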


In \S \ref{subsection:curvsurf}, we investigate the FPLB problem on neighborhoods of an elliptic curve in the projective plane blown up at nine general points of a smooth cubic, and give a characterization in terms of complex analytical and geometrical properties of the complement of $Y$ and the anticanonical line bundle 
(Proposition \ref{prop:cor3}). 
Theorem \ref{thm:exp2bl9pts} will be shown by applying this characterization and our previous results in \cite{K20242} on this example. 
See also \cite[\S 3]{KUprojK3} and \cite[Lemma 4.6]{K20242} for the FPLB problem on neighborhoods of a compact non-singular curve with topologically trivial normal bundle. 

We prove Theorem \ref{thm:main} by showing the unitary flatness of a given line bundle $L$ on a neighborhood of $Y$ under the assumptions as in the theorem. 
As $L|_Y$ is holomorphically trivial, it follows from a simple topological argument that there exists an open neighborhood $V_0$ of $Y$ such that $L|_{V_0}$ is topologically trivial (by taking $V_0$ such that $Y$ is a deformation retract of $V_0$, see \S \ref{subsection:prelimcoh}). 
Therefore, in view of an argument for proving the unitary flatness of a topologically trivial holomorphic line bundle on a complex manifold on which the $\ddbar$-lemma holds in the sense of \cite{KT} (see \S \ref{subsection:kasiwarathm}), the problem is reduced to solving the $\ddbar$-problem on a neighborhood of the boundary of the complement $M$ of $Y$. We will define an obstruction class $v(\mathcal{I}_Y, L)\in {\rm Ker}(H_c^2(M, \mathcal{O}_M)\to H^2(M, \mathcal{O}_M))$ to the solvability of this problem (\S \ref{section:def_obstr_class_vL}) and show the triviality of $L$ on a neighborhood of $Y$ when it vanishes (Proposition \ref{prop:van_v_implies_Ltriv}). 

The organization of the paper is as follows. 
\S 2 is a preliminary section. Here we prepare some notation and fundamental tools which are used throughout this paper. 
Based on \cite{Gri}, we make some formal (purely sheaf-theoretic) observations on the obstructions and give some fundamental sufficient conditions for the triviality of a given holomorphic line bundle on a neighborhood of $Y$ in \S 3. 
In \S 4.1, we first give a proof of (a part of) Theorem \ref{thm:main} $(i)$ under an additional assumption 
in order to make our strategy in the proof of Theorem \ref{thm:main} clear. 
Based on an observation in this section, we give the definition of the class $v(\mathcal{I}_Y, L)$ in \S 4.2. 
The proof of Theorem \ref{thm:main} and its corollaries will be given in \S 5. 
In \S 6, we will make further discussions by comparing our results with some known results. 
We also give some examples and prove Theorem \ref{thm:exp2bl9pts} in this section. 
%
\vskip3mm
{\bf Acknowledgment. } 
The author would like to thank Prof. Takato Uehara, Prof. Yohsuke Matsuzawa, Dr. Satoshi Ogawa, and Dr. Jinichiro Tanaka for discussions. 
He is supported by the Grant-in-Aid for Scientific Research C (23K03119) from JSPS. 

\section{Preliminaries}\label{section:prelim}

\subsection{Notation}

In the present paper, any complex manifold will be assumed to be paracompact and Hausdorff. 
To denote a complex manifold, in what follows we choose to use the letter $\Omega$ when the compactness assumption is not needed, and keep the letter $X$ for compact manifolds. 

For a complex manifold $\Omega$ of dimension $n$ and a holomorphic line bundle $L$ on $\Omega$, we fix our notational conventions as follows. 
\begin{itemize}
\item For a function $f\colon \Omega\to \mathbb{C}$, ${\rm Re}\,f$ (resp. ${\rm Im}\,f$) denotes the real part (resp. the imaginary part) of $f$ ($\mathbb{C}$ denotes the field of complex numbers). 
\item $T_\Omega:=T^{1, 0} \Omega$ (resp. $T_\Omega^*:=T^*\Omega$) denotes the holomorphic tangent (resp. cotangent) bundle of $\Omega$. 
\item $K_\Omega:=\textstyle\bigwedge^nT_\Omega^*$ denotes the canonical line bundle of $\Omega$. 
\item For a submanifold $Y\subset \Omega$, $N_{Y/\Omega}$ denotes the (holomorphic) normal bundle of $Y$ in $\Omega$. 
\item For a divisor $D$ of $\Omega$, $[D]$ denotes the holomorphic line bundle on $\Omega$ associated to $D$ (See \cite[Chapter I \S 1]{GH} for example). 
\item $L^{-1}$ denotes the dual line bundle of $L$. 
\item $\mathcal{O}_\Omega$ (resp. $\mathcal{O}_\Omega^*$) denotes the sheaf of germs of holomorphic functions (resp. nowhere vanishing holomorphic functions) on $\Omega$. 
\item $\mathcal{O}_\Omega(L)$ denotes the sheaf of germs of holomorphic sections of $L$. 
\item $\mathcal{A}_\Omega^{p, q}$ denotes the sheaf of germs of $C^\infty$ $(p, q)$-forms on $\Omega$. 
\item For an ideal sheaf $\mathcal{J}$ of $\mathcal{O}_\Omega$, $\mathcal{J}\mathcal{A}_\Omega^{p, q}$ denotes the image sheaf of the natural morphism $\mathcal{J} \otimes_{\mathcal{O}_\Omega}\mathcal{A}_\Omega^{p, q}\to \mathcal{A}_\Omega^{p, q}$. 
\item For a $C^\infty$ Hermitian metric $h$ on $L$, $\Theta_h$ denotes the Chern curvature form of $h$. 
\item $\delbar$ denotes the complex exterior derivative of type $(0, 1)$. 
\end{itemize}

\subsection{Some fundamental facts on cohomologies}\label{subsection:prelimcoh}

Let $Z$ be a complex manifold or a (reduced) analytic subset of a complex manifold, and $\mathcal{F}$ be a sheaf of abelian groups on $Z$. 
We denote by 
$H^q(Z, \mathcal{F})$ the $q$-th cohomology group with values in $\mathcal{F}$, which is the $q$-th derived functor of the functor $\Gamma(Z, -)$ of global sections evaluated on $\mathcal{F}$. 
The $q$-th cohomology group with values in $\mathcal{F}$ with compact support, which is defined as the $q$-th derived functor of the the functor $\Gamma_c(Z, -)$ of global sections with compact support evaluated on $\mathcal{F}$, will be denoted by $H_c^q(Z, \mathcal{F})$. 
As $Z$ is paracompact, $H^q(Z, \mathcal{F})$ can be identified with the $q$-th \v{C}ech cohomology group
\[
\check{H}^q(Z, \mathcal{F}) = \varinjlim_\mathcal{U}\check{H}^q(\mathcal{U}, \mathcal{F}), 
\]
where $\mathcal{U}$ runs over all the open coverings of $Z$ and $\check{H}^q(\mathcal{U}, \mathcal{F}) = \check{Z}^q(\mathcal{U}, \mathcal{F})/\check{B}^q(\mathcal{U}, \mathcal{F})$ (Here $\check{Z}^q(\mathcal{U}, \mathcal{F})$ (resp. $\check{B}^q(\mathcal{U}, \mathcal{F})$ denotes the group of $q$-th \v{C}ech cocycles (resp. coboundaries), see \cite[Chapter III \S 2]{GPR} for example). 
When $Z$ is a complex manifold and $\mathcal{F}=\mathcal{O}_Z$, we often compute the cohomologies by using the soft resolution
\begin{equation}\label{resolution:soft_dolbeault}
0 \to \mathcal{O}_Z \to \mathcal{A}_Z^{0, 0} \to \mathcal{A}_Z^{0, 1} \to \mathcal{A}_Z^{0, 2} \to \cdots, 
\end{equation}
where the morphism $\mathcal{A}_Z^{0, q} \to \mathcal{A}_Z^{0, q+1}$ is the one defined by the complex exterior derivative $\delbar$ of type $(0, 1)$ (Recall that, as $Z$ is locally compact and $\sigma$-compact, any soft sheaf is both $\Gamma(Z, -)$-acyclic and $\Gamma_c(Z, -)$-acyclic. See \cite[\S IV Theorem 2.2]{I} and \cite[\S III Theorem 2.7]{I} for example). In other words, we often identify 
$H^q(Z, \mathcal{O}_Z)$ with the $q$-th Dolbeault cohomology group $H^{0, q}(Z)$ and 
$H_c^q(Z, \mathcal{O}_Z)$ with the group 
\[
H_c^{0, q}(Z) := \begin{cases}
{\rm Ker}(\delbar\colon \Gamma_c(Z, \mathcal{A}_Z^{0, 0})\to \Gamma_c(Z, \mathcal{A}_Z^{0, 1}))=\Gamma_c(Z, \mathcal{O}_Z) & (q=0) \\
\frac{{\rm Ker}(\delbar\colon \Gamma_c(Z, \mathcal{A}_Z^{0, q})\to \Gamma_c(Z, \mathcal{A}_Z^{0, q+1}))}{{\rm Image}(\delbar\colon \Gamma_c(Z, \mathcal{A}_Z^{0, q-1})\to \Gamma_c(Z, \mathcal{A}_Z^{0, q}))} & (q>0)
\end{cases}. 
\]
We topologize each $\Gamma_c(Z, \mathcal{A}_Z^{0, q})$ and its subspaces by the topology of the uniform convergence of all derivatives with uniformly bounded supports, and $H_c^{0, q}(Z)$ by the quotient topology. 

Let $\mathcal{J}\subset \mathcal{O}_Z$ be an ideal sheaf which is flat. Then, by applying the same argument to the soft resolution 
\[
0 \to \mathcal{J} \to \mathcal{J}\mathcal{A}_Z^{0, 0} \to \mathcal{J}\mathcal{A}_Z^{0, 1} \to \mathcal{J}\mathcal{A}_Z^{0, 2} \to \cdots  
\]
obtained by applying the (exact) functor $\mathcal{J}\otimes_{\mathcal{O}_Z}-$ to the sequence $(\ref{resolution:soft_dolbeault})$, one has
\begin{equation}\label{resolution:soft_dolbeault_general}
H^{q}(Z, \mathcal{J}) \cong \begin{cases}
\Gamma(Z, \mathcal{J}) & (q=0) \\
\frac{{\rm Ker}(\delbar\colon \Gamma(Z, \mathcal{J}\mathcal{A}_Z^{0, q})\to \Gamma(Z, \mathcal{J}\mathcal{A}_Z^{0, q+1}))}{{\rm Image}(\delbar\colon \Gamma(Z, \mathcal{J}\mathcal{A}_Z^{0, q-1})\to \Gamma(Z, \mathcal{J}\mathcal{A}_Z^{0, q}))} & (q>0)
\end{cases}. 
\end{equation}

Let $G$ be an abelian group such as the group $\mathbb{Z}$ of integers, $\mathbb{R}$ of real numbers, $\mathbb{C}$ of complex numbers, and ${\rm U}(1):=\{t\in\mathbb{C}\mid |t|=1\}$. 
We denote by $\underline{G}_Z$ the sheaf of germs of $G$-valued locally constant functions on $Z$, where $Z$ is a complex manifold or an analytic subset of a complex manifold. 
As it is known that $Z$ is triangulable \cite{L}, we can identify $H^q(Z, \underline{G}_Z) (\cong \check{H}^q(Z, \underline{G}_Z))$ with the singular cohomology group $H^q(Z, G)$ \cite[Chapter IX Theorem 9.3]{ES}. 
For a complex manifold $\Omega$ and an analytic subset $Y\subset \Omega$, it again follows from \cite{L} that, for any neighborhood $V$ of $Y$ in $\Omega$, there exists an open neighborhood $V_0$ of $Y$ in $V$ such that 
\begin{equation}\label{isom:singularsheafcoh}
H^q(Y, \underline{G}_Y)
\cong H^q(Y, G)
\cong H^q(V_0, G)
\cong H^q(V_0, \underline{G}_{V_0})
\end{equation}
holds (Consider $V_0$ such that $Y$ is a deformation retract of $V_0$, see \cite[Proposition A.5]{H}). 

Again let $\Omega$ be a complex manifold and $Y\subset \Omega$ be an analytic subset. Denote by $i\colon Y\to \Omega$ the inclusion. 
For a sheaf $\mathcal{F}$ on $V$, $\mathcal{F}|_Y$ denotes the (set-theoretical) restriction $i^{-1}\mathcal{F}$: i.e., $\mathcal{F}|_Y$ is the sheaf on $Y$ such that the stalk $(\mathcal{F}|_Y)_y$ coincides with $\mathcal{F}_y$ for any $y\in Y$. 
Note that the functor $\mathcal{F}\mapsto \mathcal{F}|_Y$ between the categories of sheaves on $\Omega$ and $Y$ is exact, see \cite[\S II.4]{I} for example. 
In what follows, the group $\textstyle\varinjlim_{V}H^q(V, \mathcal{F})$ plays an important role, where $V$ runs over open neighborhoods of $Y$ in $\Omega$ unless otherwise described. 
For this group, the following is fundamental. 
\begin{lemma}\label{lem:prelim_limHq}
Let $\Omega$ be a complex manifold, $Y\subset \Omega$ be an analytic subset, and $\mathcal{F}$ be a sheaf on $\Omega$. Then the natural morphism 
\[
\varinjlim_{V}H^q(V, \mathcal{F}) \to H^q(Y, \mathcal{F}|_Y)
\]
is isomorphic for any $q$. 
When $\mathcal{F}=\underline{G}_{\Omega}$ for an abelian group $G$, the natural morphism 
\[
\varinjlim_{V}H^q(V, \underline{G}_V) \to H^q(Y, \underline{G}_Y)
\]
is isomorphic for any $q$. 
\end{lemma}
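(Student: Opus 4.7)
The plan is to resolve $\mathcal{F}$ by a soft resolution on $\Omega$, restrict it to $Y$ via the inverse image functor $i^{-1}$, and interchange cohomology with the filtered direct limit over open neighborhoods of $Y$ in $\Omega$. Throughout, I will use that $\Omega$ is paracompact Hausdorff and that $Y\subset \Omega$ is a closed subspace (since $Y$ is analytic).

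First, I would fix a soft resolution $0 \to \mathcal{F} \to \mathcal{S}^\bullet$ on $\Omega$, whose existence is guaranteed by paracompactness (e.g.\ the Godement canonical resolution). Since the restriction functor $i^{-1}$ is exact and since the restriction of a soft sheaf to a closed subspace of a paracompact Hausdorff space is again soft (see \cite[\S II]{I}), the complex $\mathcal{S}^\bullet|_Y$ is a soft resolution of $\mathcal{F}|_Y$ on $Y$. Therefore, both $H^q(V,\mathcal{F})$ for every open neighborhood $V$ of $Y$ and $H^q(Y,\mathcal{F}|_Y)$ can be computed as the cohomologies of the corresponding global-section complexes $\Gamma(V,\mathcal{S}^\bullet)$ and $\Gamma(Y,\mathcal{S}^\bullet|_Y)$ respectively.

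Next I would invoke the standard fact that for any soft sheaf $\mathcal{S}$ on a paracompact Hausdorff space $\Omega$ and any closed subset $Y\subset \Omega$, the canonical map
\[
\varinjlim_{V} \Gamma(V, \mathcal{S}) \;\longrightarrow\; \Gamma(Y, \mathcal{S}|_Y)
\]
is an isomorphism, where $V$ runs over open neighborhoods of $Y$ (see again \cite[\S II]{I}); surjectivity rests on patching using softness, and injectivity follows because a section of $\mathcal{S}$ over $V$ that restricts to zero on $Y$ already vanishes on some smaller neighborhood. Applying this levelwise to the resolution $\mathcal{S}^\bullet$ and using that filtered direct limits of abelian groups are exact, hence commute with the cohomology of a complex, one obtains
\[
\varinjlim_{V} H^q(V, \mathcal{F}) = \varinjlim_{V} H^q\bigl(\Gamma(V, \mathcal{S}^\bullet)\bigr) = H^q\bigl( \varinjlim_{V} \Gamma(V, \mathcal{S}^\bullet)\bigr) = H^q\bigl(\Gamma(Y, \mathcal{S}^\bullet|_Y)\bigr) = H^q(Y, \mathcal{F}|_Y),
\]
which establishes the first assertion.

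For the second assertion, I would simply note that $i^{-1}\underline{G}_\Omega$ is canonically isomorphic to $\underline{G}_Y$: both are sheafifications of the constant presheaf with value $G$ on $Y$, and the natural map induced by restricting locally constant $G$-valued functions is a stalkwise isomorphism. Likewise $\underline{G}_\Omega|_V=\underline{G}_V$ for every open $V\subset\Omega$. Hence the second statement is a direct specialization of the first applied to $\mathcal{F}=\underline{G}_\Omega$. The only nontrivial ingredient is the extension-and-patching assertion for soft sheaves on a closed subset of a paracompact space, where paracompactness of $\Omega$ is essentially used; the rest of the argument is a formal manipulation of the derived functor formalism.
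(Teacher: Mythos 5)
Your argument is correct. For the first assertion the paper gives no proof at all — it simply cites \cite[\S IV Proposition 2.4]{I} — and what you have written is essentially the standard proof of that cited tautness statement: a soft (Godement) resolution, exactness of $i^{-1}$ together with preservation of softness under restriction to a closed subspace of a paracompact space, the isomorphism $\varinjlim_V\Gamma(V,\mathcal{S})\to\Gamma(Y,\mathcal{S}|_Y)$ at the level of the resolving sheaves, and exactness of filtered colimits of abelian groups. So there is no real divergence there, only more detail than the paper supplies; all the standard facts you invoke do hold in this setting ($Y$ is closed, and open subsets of a paracompact manifold are again paracompact, so softness restricts and soft sheaves are acyclic on every space in sight). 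The one genuine difference is in the second assertion: the paper deduces it from triangulability and the existence of neighborhoods $V_0$ of which $Y$ is a deformation retract, passing through singular cohomology via $(\ref{isom:singularsheafcoh})$, whereas you observe that $\underline{G}_\Omega|_V=\underline{G}_V$ and $i^{-1}\underline{G}_\Omega=\underline{G}_Y$, so that the second statement is literally the first one specialized to $\mathcal{F}=\underline{G}_\Omega$. Your route is more economical and avoids the topological input, which the paper really needs only for the separate purpose of identifying $H^q(Y,\underline{G}_Y)$ with singular cohomology; for the lemma as stated your specialization argument suffices.
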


\begin{proof}
Refer to \cite[\S IV Proposition 2.4]{I} for the former half. The latter half follows by considering open neighborhoods $V_0$ such that $(\ref{isom:singularsheafcoh})$ holds. 
\end{proof}

For a non-compact complex manifold $M$, we often use the exact sequence
\begin{align}\label{exseq:long_ex_seq_H_cHlimH}
0 &\to H^{0, 0}(M) \to \varinjlim_K H^{0, 0}(M\setminus K)
\to H_c^{0, 1}(M) \to H^{0, 1}(M) \to \varinjlim_K H^{0, 1}(M\setminus K)\\
&\to H_c^{0, 2}(M) \to H^{0, 2}(M) \to \varinjlim_K H^{0, 2}(M\setminus K)\to \cdots, \nonumber
\end{align}
which is obtained by considering the short exact sequence of chain complexes
\[
0\to \Gamma_c(M, \mathcal{A}_M^{0, \bullet}) \to \Gamma(M, \mathcal{A}_M^{0, \bullet}) \to \varinjlim_K\Gamma(M\setminus K, \mathcal{A}_M^{0, \bullet}) \to 0, 
\]
where $K$ runs over all the compact subsets of $M$. 

\begin{remark}\label{rmk:explicit_repre_renketu}
Take $a \in \textstyle\varinjlim_K H^{0, q}(M\setminus K)$. 
By construction of the long exact sequence, the following holds for the image $b \in H_c^{0, q+1}(M)$ of $a$ by the connecting morphism in the sequence ($\ref{exseq:long_ex_seq_H_cHlimH}$): For a compact subset $K\subset M$ and a $\delbar$-closed $(0, q)$-form $\eta$ which represents $a$, $b$ coincides with the class $[\delbar \widetilde{\eta}]$ represented by $\delbar \widetilde{\eta}$, where $\widetilde{\eta}$ is a $(0, q)$-form on $M$ such that $\widetilde{\eta}=\eta$ holds on $M\setminus \widehat{K}$ for some compact subset $\widehat{K}\subset M$ with $K\subset \widehat{K}$. 
\end{remark}

\subsection{Unitary flatness of holomorphic line bundles and the $\ddbar$-Lemma}\label{subsection:kasiwarathm}

Let $\Omega$ be a complex manifold. 
In accordance with \cite{KT}, we say that ``{\it the $\ddbar$-lemma holds on $\Omega$}" if the following holds: 
for any given $d$-exact $C^\infty$ $(1, 1)$-form $\theta$ on $\Omega$, there exists a $C^\infty$ function $f$ on $\Omega$ such that $\theta=\ddbar f$ holds. 
It is well-known that the $\ddbar$-lemma holds on $\Omega$ when $\Omega$ is a compact K\"ahler manifold, see \cite[Chapter 1 \S 2]{GH} or \cite[Proposition 1.7.24]{Kob} for example. 

Let $Z$ be a complex manifold or an analytic subset of a complex manifold. 
We say that a holomorphic line bundle $L$ on $Z$ is {\it unitary flat} if there exist an open covering $\{U_j\}$ of $Z$ and a local trivialisation $e_j$ of $L$ on each $U_j$ such that $|e_j/e_k|\equiv 1$ holds on each $U_j\cap U_k$: i.e., $L$ is in the image of the morphism $I\colon \check{H}^1(Z, \underline{{\rm U}(1)}_Z)\to {\rm Pic}(Z)=\check{H}^1(Z, \mathcal{O}_Z^*)$ induced by the natural injection $\underline{{\rm U}(1)}_Z\to \mathcal{O}_Z^*$. 
When $Z$ is a complex manifold, it is well-known that this condition is equivalent to 
the existence of a {\it flat metric} $h$: i.e., an Hermitian metric $h$ such that $\Theta_h$ is identically zero (See \cite[Proposition 1.4.21]{Kob} for example). 
For any holomorphic line bundle $L$ on $Z$, it follows from the following lemma that the unitary flat structure on $L$ is unique (if exists) when $Z$ is compact. 

\begin{lemma}\label{lem:inj_unitaryflattohol} 
The morphism $I$ is injective if $Z$ is compact. 
\end{lemma}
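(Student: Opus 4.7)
The plan is to take any class $\alpha \in \check{H}^1(Z, \underline{{\rm U}(1)}_Z)$ with $I(\alpha) = 0$ in ${\rm Pic}(Z)$ and exhibit it explicitly as a coboundary. After possibly refining the cover, $\alpha$ is represented by a cocycle $\{u_{jk}\}$ on an open covering $\{U_j\}$ of $Z$ with $u_{jk}$ locally constant and ${\rm U}(1)$-valued, and the triviality of $I(\alpha)$ gives holomorphic nowhere vanishing functions $f_j \in \Gamma(U_j, \mathcal{O}_Z^*)$ with $u_{jk} = f_j \cdot f_k^{-1}$ on each $U_j \cap U_k$.

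The key observation is that the condition $|u_{jk}| \equiv 1$ forces $|f_j| = |f_k|$ on overlaps, so the local functions $|f_j|$ patch into a globally defined continuous positive function $\varphi$ on $Z$. The next step is to invoke the maximum modulus principle for holomorphic functions on the compact reduced complex space $Z$: since $\varphi$ attains a maximum somewhere on $Z$, on some $U_j$ the modulus $|f_j|$ attains an interior maximum, and hence $f_j$ is locally constant in a neighborhood of each such point; a standard openness-and-closedness argument on each connected component of $Z$ then propagates this to show that $\varphi$ is locally constant on all of $Z$.

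Once $\varphi$ is locally constant, the functions $g_j := f_j \cdot (\varphi|_{U_j})^{-1}$ are holomorphic on $U_j$ with $|g_j| \equiv 1$, and such functions are necessarily locally constant with values in ${\rm U}(1)$ (since $g_j \bar{g}_j = 1$ forces $\bar{g}_j = g_j^{-1}$ to be holomorphic, so $g_j$ is both holomorphic and antiholomorphic). Consequently $\{g_j\}$ defines a $0$-cochain in $\underline{{\rm U}(1)}_Z$, and the relation $u_{jk} = g_j \cdot g_k^{-1}$ exhibits $\alpha$ as its \v{C}ech coboundary, giving $\alpha = 0$ in $\check{H}^1(Z, \underline{{\rm U}(1)}_Z)$ as desired.

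The main obstacle is the rigorous use of the maximum modulus principle on a reduced complex space $Z$ that may be singular along some analytic subset; this is addressed either by reducing to the smooth case via the normalization of each irreducible component of $Z$ (on which a pullback of a holomorphic function remains holomorphic), or by directly citing the maximum principle for reduced complex spaces from the standard literature. Apart from this point, the remaining steps are routine sheaf-theoretic manipulations, and no use is made of the additional structure beyond compactness and the sheaf-level injection $\underline{{\rm U}(1)}_Z \hookrightarrow \mathcal{O}_Z^*$.
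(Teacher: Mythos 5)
Your proof is correct, and it runs on the same analytic engine as the paper's — compactness plus the maximum principle — but by a genuinely different (multiplicative) route. The paper first reduces, via the commutative diagrams relating the exponential sequences for $\underline{{\rm U}(1)}_Z\to\mathcal{O}_Z^*$ and $\underline{\mathbb{R}}_Z\to\mathcal{O}_Z$ (following Ueda), to the injectivity of $H^1(Z,\underline{\mathbb{R}}_Z)\to H^1(Z,\mathcal{O}_Z)$; it then writes $-f_j+f_k=r_{jk}$ with $r_{jk}$ real, observes that $\{{\rm Im}\,f_j\}$ glues to a global pluriharmonic function which is locally constant by the maximum principle, and finishes with the open mapping theorem. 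You skip that reduction entirely and argue directly on the ${\rm U}(1)$-cocycle: your $\varphi=|f_j|$ is the multiplicative counterpart of the paper's ${\rm Im}\,f_j$ (indeed $\log\varphi$ is pluriharmonic), and your observation that a holomorphic function of constant modulus is locally constant plays the role of the paper's appeal to the open mapping theorem. Your version is slightly more self-contained, since it avoids the diagram chase through the real exponential sequence. In both arguments the only delicate point is the validity of the maximum principle (and of ``constant modulus implies locally constant'') on a reduced, possibly singular and reducible compact complex space $Z$; you flag this correctly, and either passing to normalizations of the irreducible components or working on the dense regular locus settles it. One small bookkeeping point worth making explicit: the open-and-closed argument for the maximum set must be run on each connected component of $Z$ separately (each is compact, so $\varphi$ attains its maximum there), which is precisely what is needed to conclude that $\varphi$ is locally constant rather than globally constant.
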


\begin{proof}
Although this lemma can be shown in the same manner as in the case where $Z$ is a manifold (see \cite[Proposition 1 $(2)$]{U}), 
we here briefly describe the proof for the reader's convenience. 
By considering the commutative diagrams as in \cite[p. 585]{U}, the proof is reduced to showing the injectivity of the natural map $H^1(Z, \underline{\mathbb{R}}_Z)\to H^1(Z, \mathcal{O}_Z)$. 
Take an open covering $\{U_j\}$ of $Z$ and an element $\{(U_{jk}, r_{jk})\}\in\check{Z}^1(\{U_j\}, \underline{\mathbb{R}}_Z)$ such that $[\{(U_{jk}, r_{jk})\}]=0\in \check{H}^1(\{U_j\}, \mathcal{O}_Z)$ holds. 
By taking refinement if necessary, one can take an element $f_j\in \Gamma(U_j, \mathcal{O}_Z)$ for each $j$ such that $-f_j+f_k=r_{jk}$ holds on each $U_j\cap U_k$. As $\{(U_j, {\rm Im}\,f_j)\}$ defines a global pluriharmonic function on a compact space $Z$, it follows from the maximal principle that each ${\rm Im}\,f_j$ is locally constant. Therefore we infer from the open mapping theorem that each $f_j$ is also locally constant, from which the assertion follows because $\delta(\{(U_j, {\rm Re}\,f_j)\})=\{(U_{jk}, r_{jk})\}$. 
\end{proof}

When $Z$ is a compact K\"ahler manifold, it is known that the image of $I$ coincides with the kernel of the first Chern-class map $c_1^{\mathbb{R}}\colon H^1(Z, \mathcal{O}_Z^*)\to H^2(Z, \mathbb{R})$ (a theorem of Kashiwara, see \cite[\S 1]{U} for example). 
Note that the inclusion ${\rm Image}\,I\subset {\rm Ker}\,c_1^{\mathbb{R}}$ is clear by the fact that, for any holomorphic line bundle $L$ and a $C^\infty$ Hermitian metric $h$ on $L$, $\textstyle\frac{\sqrt{-1}}{2\pi}\Theta_h$ represents the (de Rham) cohomology class $c_1^{\mathbb{R}}(L)$. 
The other inclusion can be deduced form the $\ddbar$-lemma as follows: 
\begin{proposition}\label{prop:kasiwara}
Let $Z$ be a complex manifold. 
Assume that the $\ddbar$-lemma holds on $Z$. 
Then, for any holomorphic line bundle $L$ on $Z$, $L$ is unitary flat if $c_1^{\mathbb{R}}(L)=0$. 
\end{proposition}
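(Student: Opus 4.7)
The plan is to construct a flat Hermitian metric on $L$ by perturbing an arbitrary $C^\infty$ Hermitian metric $h_0$ on $L$ by a real-valued conformal factor $e^{-\vp}$. Since admitting a flat Hermitian metric is equivalent to unitary flatness (as recalled earlier in the subsection), this would yield the desired conclusion.

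The starting point is that, for any $C^\infty$ Hermitian metric $h_0$ on $L$, the form $\frac{\sqrt{-1}}{2\pi}\Theta_{h_0}$ represents $c_1^{\mathbb{R}}(L)=0$, so $\Theta_{h_0}$ is a $d$-exact $C^\infty$ $(1,1)$-form on $Z$. Applying the $\ddbar$-lemma, which holds on $Z$ by hypothesis, yields a $C^\infty$ function $f\colon Z\to\mathbb{C}$ with $\ddbar f=\Theta_{h_0}$. This is the only step that uses the hypothesis on $Z$.

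The potential $f$ produced by the $\ddbar$-lemma is a priori complex-valued, whereas the conformal factor must be real for $h:=e^{-\vp}h_0$ to be a genuine Hermitian metric; upgrading $f$ to a real-valued potential is the only nontrivial point. I would exploit the fact that $\Theta_{h_0}$ is pure imaginary (a direct consequence of the local formula $\Theta_{h_0}=-\ddbar\log|e|_{h_0}^2$ for a local holomorphic frame $e$), i.e.\ $\overline{\Theta_{h_0}}=-\Theta_{h_0}$. Taking complex conjugates in $\ddbar f=\Theta_{h_0}$ and using the standard identity $\overline{\ddbar f}=-\ddbar\bar f$, one obtains $\ddbar\bar f=\ddbar f$, so $\vp:=-{\rm Re}\,f$ is a real $C^\infty$ function satisfying $\ddbar\vp=-\Theta_{h_0}$. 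Setting $h:=e^{-\vp}h_0$ then yields a bona fide Hermitian metric whose curvature is $\Theta_h=\Theta_{h_0}+\ddbar\vp=0$, so $h$ is flat and $L$ is unitary flat.
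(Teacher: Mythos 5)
Your proposal is correct and follows essentially the same route as the paper: fix a metric $h_0$, use that $\frac{\sqrt{-1}}{2\pi}\Theta_{h_0}$ represents $c_1^{\mathbb{R}}(L)=0$ to apply the $\ddbar$-lemma, extract a real potential via the conjugation symmetry of the curvature, and rescale $h_0$ to a flat metric. The only cosmetic difference is that the paper applies the lemma to the real form $\frac{\sqrt{-1}}{2\pi}\Theta_{h_0}$ and replaces $f$ by $(f+\overline{f})/2$, while you apply it to the pure-imaginary $\Theta_{h_0}$ and take $-{\rm Re}\,f$; these are the same reality argument.
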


\begin{proof}
Let $L$ be a holomorphic line bundle on $Z$ with $c_1^{\mathbb{R}}(L)=0$. 
Take a $C^\infty$ Hermitian metric $h_0$ on $L$. 
As $\theta := \textstyle\frac{\sqrt{-1}}{2\pi}\Theta_{h_0}$ represents the class $c_1^{\mathbb{R}}(L)$, it follows from the assumption that $\theta=\sqrt{-1}\ddbar f$ holds for a $C^\infty$ function $f$ on $Z$. 
As $\overline{\theta}=\theta$, we may assume that $f$ is $\mathbb{R}$-valued by replacing $f$ with $(f+\overline{f})/2$. 
As $h := h_0 \cdot \exp(2\pi f)$ is a flat metric on $L$, the assertion follows. 
\end{proof}


\section{Obstruction classes and some sufficient conditions for the FPLB}\label{section:formal_seiri}

Let $\Omega$ be a complex manifold, $\mathcal{I}\subset\mathcal{O}_\Omega$ be a coherent ideal sheaf which is non-trivial (i.e., $\mathcal{I}\not=0, \mathcal{O}_\Omega$), and $Y$ be the analytic subset defined by $Y={\rm Supp}\,\mathcal{O}_\Omega/\mathcal{I}$. 
As a slight generalization of $Y_\mu$'s as in \S 1, here we consider the complex space $Y_\mu^{\mathcal{I}}:=(Y, \mathcal{O}_\Omega/\mathcal{I}^{\mu+1})$ and regard it as 
{\it the $\mu$-th infinitesimal neighborhood of $Y$ with respect to $\mathcal{I}$}. 
The natural morphism $Y_\mu^{\mathcal{I}}\to \Omega$ will be denoted by $i_\mu^{\mathcal{I}}$. 
Note that $i_\mu^{\mathcal{I}}\colon Y_\mu^{\mathcal{I}}\to \Omega$ coincides with $i_\mu\colon Y_\mu\to \Omega$ if $\mathcal{I}$ is the defining ideal sheaf $\mathcal{I}_Y$ of $Y$. 
It is known that the formal complex space $Y_\infty^{\mathcal{I}}:=(Y,\  \textstyle\varprojlim_\mu\mathcal{O}_{\Omega}/\mathcal{I}^{\mu+1})$ coincides with 
the formal completion $Y_\infty$ of $\Omega$ along $Y$ \cite[Lemme 2.1]{Ba}. 

In this section, for $\Omega, \mathcal{I}$, and $Y$, we make some formal (purely sheaf-theoretic) observation on the obstructions and give some fundamental sufficient conditions for the triviality of a given holomorphic line bundle on a neighborhood of $Y$, which are nothing but a slight generalizations of the results and observations in \cite[I \S 3, 4, 5]{Gri}, which is for the case where $Y$ is non-singular and $\mathcal{I}=\mathcal{I}_Y$. Especially, the arguments in \S \ref{subsection:coh_van_obst_FPLB} are based on \cite[I \S 5 (a)]{Gri}. See also \cite[Chapter II \S 4, Chapter VII]{GPR} and \cite[\S 1]{P}. 

\subsection{The FPLB Problem in terms of $\mathcal{I}$}

First let us rephrase the the triviality of $i_\infty^*\mathcal{O}_V(L)$ in terms of  the map $(\ref{eq:naturalmapfromvarprojlimLrestr})$ as follows. 

\begin{lemma}\label{lem:rel_probs}
Let $\Omega, \mathcal{I}$, and $Y$ be as above. 
For any holomorphic line bundle $L$ on a neighborhood $V$ of $Y$ in $\Omega$, the following are equivalent: \\
$(i)$ $i_\infty^*\mathcal{O}_V(L)$ is trivial (as an invertible sheaf on $Y_\infty$). \\
$(ii)$ There exists an element $\widetilde{\sigma}\in \textstyle\varprojlim_\mu \Gamma(V, \mathcal{O}_V(L)\otimes \mathcal{O}_V/\mathcal{I}^\mu)$ whose image by the natural map 
\[
\varprojlim_\mu \Gamma(V, \mathcal{O}_V(L)\otimes \mathcal{O}_V/\mathcal{I}^\mu) \to \Gamma(V, \mathcal{O}_V(L)\otimes \mathcal{O}_V/\sqrt{\mathcal{I}})
\]
is a global nowhere vanishing holomorphic section of $L|_Y$. 
\end{lemma}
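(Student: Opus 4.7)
The plan is to reformulate both conditions in terms of the global-section module $\Gamma(Y_\infty, i_\infty^*\mathcal{O}_V(L))$, and then reduce the equivalence to a pointwise statement via Nakayama's lemma. First I would establish the identification
\[
\Gamma\bigl(Y_\infty, i_\infty^*\mathcal{O}_V(L)\bigr) \;\cong\; \varprojlim_\mu \Gamma\bigl(V, \mathcal{O}_V(L)\otimes \mathcal{O}_V/\mathcal{I}^\mu\bigr),
\]
which rests on two standard observations: the global-section functor is a right adjoint and so commutes with inverse limits, and each sheaf $\mathcal{O}_V(L)\otimes \mathcal{O}_V/\mathcal{I}^\mu$ is supported on $Y$, so its global sections on $V$ and on $Y$ coincide. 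Under this identification, the triviality of the invertible sheaf $i_\infty^*\mathcal{O}_V(L)$ on $Y_\infty$ amounts to the existence of a global section whose germ at each point of $Y$ generates the corresponding stalk.

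Next I would isolate the key local fact. For $y\in Y$, since $y\in\mathrm{Supp}\,\mathcal{O}_\Omega/\mathcal{I}$ we have $\mathcal{I}_y\subset\mathfrak{m}_y$, so the stalk $(\mathcal{O}_V/\mathcal{I}^\mu)_y$ is a local ring with maximal ideal $\mathfrak{m}_y/\mathcal{I}^\mu_y$ and residue field $\mathbb{C}$. After choosing a local trivialization $e$ of $L$ near $y$, the $\mu$-th component $\sigma_\mu$ of an element $\widetilde{\sigma}$ of the inverse limit can be written as $f_\mu\cdot e$, and $\sigma_\mu$ generates $(\mathcal{O}_V(L)/\mathcal{I}^\mu \mathcal{O}_V(L))_y$ as a module if and only if $f_\mu$ is a unit in $(\mathcal{O}_V/\mathcal{I}^\mu)_y$; by Nakayama this is in turn equivalent to $f_\mu(y)\neq 0$, i.e. to the image of $\sigma_\mu$ in the fiber $L_y$ being nonzero. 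Importantly, this nonvanishing condition depends only on the reduction to the reduced space $Y$, so it is insensitive to whether we test modulo $\mathcal{I}$ or modulo $\sqrt{\mathcal{I}}$.

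With these two ingredients, both implications become essentially formal. For $(ii)\Rightarrow(i)$, the nowhere-vanishing hypothesis on the image in $L|_Y$ makes $\widetilde{\sigma}_y$ a generator of the stalk of $i_\infty^*\mathcal{O}_V(L)$ at every $y\in Y$, so $\widetilde{\sigma}$ trivializes $i_\infty^*\mathcal{O}_V(L)$. Conversely, for $(i)\Rightarrow(ii)$, a nowhere-vanishing global generator of the trivial invertible sheaf corresponds under the identification above to an element $\widetilde{\sigma}$ of the inverse limit whose image in $L|_Y$ has nonzero value at every point of $Y$. I do not expect any deep obstacle here; the only minor subtleties are the harmless discrepancies between $\mathcal{I}^\mu$ and $\mathcal{I}^{\mu+1}$ in the indexing (immaterial for an inverse limit) and between $\mathcal{I}$ and $\sqrt{\mathcal{I}}$ in the target (immaterial since ``nowhere vanishing'' is a pointwise condition, insensitive to nilpotent thickenings).
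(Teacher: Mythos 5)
Your proposal is correct and follows essentially the same route as the paper: identify the global sections of the formal pullback with the inverse limit $\varprojlim_\mu \Gamma(V, \mathcal{O}_V(L)\otimes \mathcal{O}_V/\mathcal{I}^\mu)$, and then observe that triviality of an invertible sheaf on the formal neighborhood is equivalent to the existence of a compatible system of sections whose reduction to the reduced space $Y$ is nowhere vanishing. The paper outsources the first identification to \cite[Lemme 2.3 $(ii)$]{Ba} and \cite[Proposition 4.2]{GPR}, and leaves the Nakayama-type pointwise criterion implicit, whereas you sketch both directly; that is a reasonable trade-off. The one step you gloss over is that condition $(i)$ is stated for $Y_\infty$, the formal completion defined by powers of the \emph{radical} ideal $\mathcal{I}_Y$, while condition $(ii)$ and your identification of global sections use powers of the possibly non-reduced $\mathcal{I}$; your closing remark about $\mathcal{I}$ versus $\sqrt{\mathcal{I}}$ only concerns the target of the reduction map, not the completion itself. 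One needs the cofinality of the two filtrations ($\mathcal{I}\subset\mathcal{I}_Y$ and locally $\mathcal{I}_Y^N\subset\mathcal{I}$ by the analytic Nullstellensatz), i.e.\ $Y_\infty^{\mathcal{I}}=Y_\infty$, which the paper cites as \cite[Lemme 2.1]{Ba}; this is standard but should be stated.
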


\begin{proof}
It is shown in \cite[Lemme 2.3 $(ii)$]{Ba} that the natural morphism 
\[
(i_\infty^{\mathcal{I}})^*\mathcal{O}_V(L) \to \varprojlim_\mu\Gamma\left(\mathcal{O}_V(L)/\mathcal{I}^{\mu+1} \mathcal{O}_V(L)\right)
\]
is an isomorphism. 
As $\mathcal{O}_V(L)/\mathcal{I}^{\mu+1}\mathcal{O}_V(L)
\cong \mathcal{O}_V(L)\otimes \mathcal{O}_\Omega/\mathcal{I}^{\mu+1}$ holds for any $\mu\geq 0$, 
it follows from \cite[Proposition 4.2]{GPR} that the natural map
\[
\Gamma(Y, (i_\infty^{\mathcal{I}})^*\mathcal{O}_V(L)) \to \varprojlim_\mu\Gamma(Y, \mathcal{O}_V(L)\otimes \mathcal{O}_\Omega/\mathcal{I}^{\mu+1})
\]
is isomorphic. 
Via this isomorphism, the map in the condition $(ii)$ can be naturally identified with the restriction map
\[
\Gamma(Y, (i_\infty^{\mathcal{I}})^*\mathcal{O}_V(L)) \ni \widetilde{\sigma}\mapsto \widetilde{\sigma}|_Y\in \Gamma(Y, \mathcal{O}_Y(L|_Y))
=\Gamma(V, \mathcal{O}_V(L)\otimes \mathcal{O}_V/\sqrt{\mathcal{I}}). 
\]
Therefore the condition $(ii)$ is equivalent to the condition that $(i_\infty^{\mathcal{I}})^*\mathcal{O}_V(L)$ is trivial, which is equivalent to the condition $(i)$ because $Y_\infty^{\mathcal{I}}$ coincides with $Y_\infty$ by virtue of \cite[Lemme 2.1]{Ba}. 
\end{proof}

\subsection{Obstruction classes and some fundamental sufficient conditions for the FPLB}\label{subsection:coh_van_obst_FPLB}

In this subsection, we denote by $(\mathcal{I}^\mu)^*$ the image sheaf of $\mathcal{I}^\mu$ by the morphism $\mathcal{O}_\Omega\to \mathcal{O}_\Omega^*$ defined by the exponential map $f\mapsto \exp(2\pi\sqrt{-1}f)$. Then the restriction of the exponential morphism
\[
\mathcal{I}^\mu|_Y \to (\mathcal{I}^\mu)^*|_Y
\]
is an isomorphism for any $\mu\geq 1$, since the inverse is given by considering the branch of the logarithm which maps $1$ to $0$. 
Therefore one has a commutative diagram
\[
\xymatrix{
   & 0 \ar[r]\ar[d] & \mathcal{I}^\mu|_Y\ar[r]_{\cong\ \ }\ar[d] &  (\mathcal{I}^\mu)^*|_Y\ar[r]\ar[d]& 0\\
  0 \ar[r] & \underline{\mathbb{Z}}_{\Omega}|_Y \ar[r] & \mathcal{O}_{\Omega}|_Y\ar[r]& \mathcal{O}_{\Omega}^*|_Y\ar[r]& 0
}
\]
of sheaves with exact rows. 
By considering the induced long exact sequences and applying Lemma \ref{lem:prelim_limHq}, we have a commutative diagram
\[
\xymatrix{
0\ar[r]\ar[d] & \varinjlim_V H^1(V, \mathcal{I}^\mu)\ar[rd]_{\alpha_{\mathcal{I}, \mu}}\ar[r]_{\cong\ \ }\ar[d] & \varinjlim_V H^1(V, (\mathcal{I}^\mu)^*)\ar[d]\ar[r]& 0\ar[d]\\
H^1(Y, \mathbb{Z})\ar[r] & \varinjlim_V H^1(V, \mathcal{O}_V)\ar[r]& \varinjlim_V H^1(V, \mathcal{O}_V^*)\ar[r]& H^2(Y, \mathbb{Z})
}
\]
with exact rows. Let us denote by $\alpha_{\mathcal{I}, \mu}$ the map $\textstyle\varinjlim_V H^1(V, \mathcal{I}^\mu)\to \textstyle\varinjlim_V H^1(V, \mathcal{O}_V^*)$ which appears in the diagram above. 

For a holomorphic line bundle $L$ on a neighborhood of $Y$, 
we denote by $[L]_{Y}$ the element of $\textstyle\varinjlim_V H^1(V, \mathcal{O}_V^*)$ which corresponds to (the germ of) $L$. 

\begin{lemma}\label{lem:iikae_Ltriv_fund}
For a holomorphic line bundle $L$ on a neighborhood $V$ of $Y$ and a positive integer $\mu$, the following hold: \\
$(i)$  $[L]_{Y}=1\in\textstyle\varinjlim_V H^1(V, \mathcal{O}_V^*)$ holds if and only if there exists a neighborhood $V_0$ of $Y$ in $V$ such that $L|_{V_0}$ is holomorphically trivial. \\
$(ii)$ $[L]_{Y} \in {\rm Image}\,\alpha_{\mathcal{I}, \mu}$ holds if and only if $(i_{\mu-1}^{\mathcal{I}})^*\mathcal{O}_V(L)$ is trivial (as an invertible sheaf on $Y_{\mu-1}^{\mathcal{I}}$). 
\end{lemma}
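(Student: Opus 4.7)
The plan is to treat $(i)$ as a direct unpacking of the definition of the colimit, and to prove $(ii)$ by realizing $\mathrm{Image}\,\alpha_{\mathcal{I}, \mu}$ as the kernel of a restriction map in a cohomology long exact sequence. For $(i)$, I would note that by the definition of the direct limit $\varinjlim_V H^1(V, \mathcal{O}_V^*)$, the equality $[L]_Y = 1$ is equivalent to the vanishing of the image of $[L]$ in $H^1(V_0, \mathcal{O}_{V_0}^*) = \mathrm{Pic}(V_0)$ for some open neighborhood $V_0\subset V$ of $Y$, and an element of $\mathrm{Pic}(V_0)$ is zero precisely when the corresponding holomorphic line bundle is trivial on $V_0$.

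For $(ii)$, my key input is the short exact sequence of sheaves on $Y$
$$0 \to (\mathcal{I}^\mu)^*|_Y \to \mathcal{O}_\Omega^*|_Y \to (\mathcal{O}_\Omega/\mathcal{I}^\mu)^*|_Y \to 0,$$
whose third term coincides with the sheaf of units $\mathcal{O}_{Y_{\mu-1}^{\mathcal{I}}}^*$ of the structure sheaf of $Y_{\mu-1}^{\mathcal{I}}$. Taking the associated cohomology long exact sequence on $Y$ and applying Lemma \ref{lem:prelim_limHq} to identify each $H^q(Y, \mathcal{F}|_Y)$ with $\varinjlim_V H^q(V, \mathcal{F})$, I obtain an exact sequence
$$\varinjlim_V H^1(V, (\mathcal{I}^\mu)^*) \to \varinjlim_V H^1(V, \mathcal{O}_V^*) \xrightarrow{\rho_\mu} \mathrm{Pic}(Y_{\mu-1}^{\mathcal{I}}),$$
whose last arrow is induced by the pullback $(i_{\mu-1}^{\mathcal{I}})^*$. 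Since the commutative diagram preceding the lemma factors $\alpha_{\mathcal{I},\mu}$ through the isomorphism $\varinjlim_V H^1(V, \mathcal{I}^\mu) \cong \varinjlim_V H^1(V, (\mathcal{I}^\mu)^*)$ induced by the exponential, the image of $\alpha_{\mathcal{I}, \mu}$ equals $\ker \rho_\mu$, and this is exactly the equivalence claimed in $(ii)$.

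The step I expect to be most delicate is the identification $\mathcal{O}_\Omega^*|_Y/(\mathcal{I}^\mu)^*|_Y \cong (\mathcal{O}_\Omega/\mathcal{I}^\mu)^*|_Y$ of the quotient sheaf, since the source sheaves are defined through the multiplicative structure and the target through the additive one. I would verify this stalkwise at $y\in Y$: first, $(\mathcal{I}^\mu)^*|_Y$ equals $(1+\mathcal{I}^\mu)|_Y$ inside $\mathcal{O}_\Omega^*|_Y$ by the same branch-of-logarithm argument used in the text to produce the isomorphism $\mathcal{I}^\mu|_Y \cong (\mathcal{I}^\mu)^*|_Y$; then at each stalk the relation $u \equiv u' \pmod{(\mathcal{I}^\mu)^*}$ is the same as $u\equiv u' \pmod{\mathcal{I}^\mu}$, and surjectivity onto $(\mathcal{O}_\Omega/\mathcal{I}^\mu)^*$ reduces to the observation that any representative of a unit in $(\mathcal{O}_\Omega/\mathcal{I}^\mu)_y$ must be nonzero at $y$ and hence is already a unit in $\mathcal{O}_{\Omega, y}$. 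Once this identification is in hand, the remainder is a routine diagram chase combined with Lemma \ref{lem:prelim_limHq}.
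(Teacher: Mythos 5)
Your proof is correct, and part $(i)$ matches the paper's (which simply declares it clear from the definition). For part $(ii)$ you take a slightly more structural route than the paper. The paper argues directly with cocycles in one sentence: $(i_{\mu-1}^{\mathcal{I}})^*\mathcal{O}_V(L)$ is trivial if and only if one can choose local trivializations of $L$ whose transition functions are congruent to $1$ modulo $\mathcal{I}^\mu$, i.e.\ are sections of $(\mathcal{I}^\mu)^*=1+\mathcal{I}^\mu$, and then invokes the commutative diagram preceding the lemma. You instead complete the inclusion $(\mathcal{I}^\mu)^*|_Y\hookrightarrow \mathcal{O}_\Omega^*|_Y$ to a short exact sequence with quotient $(\mathcal{O}_\Omega/\mathcal{I}^\mu)^*|_Y=\mathcal{O}_{Y_{\mu-1}^{\mathcal{I}}}^*$ and read off ${\rm Image}\,\alpha_{\mathcal{I},\mu}=\ker\bigl(\varinjlim_V H^1(V,\mathcal{O}_V^*)\to {\rm Pic}(Y_{\mu-1}^{\mathcal{I}})\bigr)$ from the long exact sequence together with Lemma \ref{lem:prelim_limHq}. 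The two arguments have the same mathematical content: the surjectivity of $\mathcal{O}_{\Omega,y}^*\to(\mathcal{O}_{\Omega,y}/\mathcal{I}^\mu_y)^*$ that you verify stalkwise is exactly the lifting of a trivialization of the pullback to honest local trivializations of $L$ near $Y$ that the paper's phrasing uses silently, so your packaging makes that step explicit, which is a small gain in rigor. One cosmetic caution: the identity $(\mathcal{I}^\mu)^*=1+\mathcal{I}^\mu$ only holds on stalks at points of $Y$ (away from $Y$ one has $\mathcal{I}^\mu=\mathcal{O}_\Omega$ and its exponential image is all of $\mathcal{O}_\Omega^*$), so your short exact sequence must be read, as you in fact wrote it, as a sequence of sheaves on $Y$; since the restriction functor $|_Y$ is exact, the stalkwise verification at $y\in Y$ is all that is needed.
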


\begin{proof}
The assertion $(i)$ is clear from the definition. 
The assertion $(ii)$ follows from the commutativity of the diagram above, since $(i_{\mu-1}^{\mathcal{I}})^*\mathcal{O}_V(L)$ is trivial if and only if one can take a system of local trivializations of $L$ whose transitions are trivial modulo $\mathcal{I}^\mu$, which means that the transitions are sections of $(\mathcal{I}^\mu)^*$ because $1+\mathcal{I}^\mu=(\mathcal{I}^\mu)^*$ holds as subsheaf of $\mathcal{O}_\Omega^*$. 
\end{proof}

Let $L$ be a holomorphic line bundle on a neighborhood of $Y$ such that $(i_{\mu-1}^{\mathcal{I}})^*\mathcal{O}_V(L)$ is trivial ($\mu\geq 1$). 
In what follows let us consider the obstruction to the triviality of $(i_{\mu}^{\mathcal{I}})^*\mathcal{O}_V(L)$. 

By Lemma \ref{lem:iikae_Ltriv_fund} $(ii)$, one can take an element $\xi \in \alpha_{\mathcal{I}, \mu}^{-1}([L]_{Y})$. 
To investigate the extendability of $\xi$, consider the exact sequence 
\begin{equation}\label{exseq:def_umu}
\varinjlim_V H^1(V, \mathcal{I}^{\mu+1}) \to
\varinjlim_V H^1(V, \mathcal{I}^\mu) \to H^1(Y, \mathcal{I}^\mu/\mathcal{I}^{\mu+1}), 
\end{equation}
which is induced from the exact sequence $0 \to \mathcal{I}^{\mu+1}|_Y \to \mathcal{I}^\mu|_Y \to (\mathcal{I}^\mu/\mathcal{I}^{\mu+1})|_Y\to 0$ (Here we applied Lemma \ref{lem:prelim_limHq}). 
Denote by $u_\mu(\mathcal{I}, L; \xi)$ the image of $\xi$ by the map $\textstyle\varinjlim_V H^1(V, \mathcal{I}^\mu) \to H^1(Y, \mathcal{I}^\mu/\mathcal{I}^{\mu+1})$ which appears in $(\ref{exseq:def_umu})$, and set 
\[
U_\mu(\mathcal{I}, L) := \{ u_\mu(\mathcal{I}, L; \xi) \mid \xi \in \alpha_{\mathcal{I}, \mu}^{-1}([L]_{Y}) \} (\subset H^1(Y, \mathcal{I}^\mu/\mathcal{I}^{\mu+1})). 
\]
As we will see below, under some practical constraints it can be shown that $U_\mu(\mathcal{I}, L)$ is a singleton, in which case we simply denoted by $u_\mu(\mathcal{I}, L)$ the element of $U_\mu(\mathcal{I}, L)$ and say that {\it the $\mu$-th obstruction class is well-defined}. 

\begin{remark}\label{rmk:shortcomparison_Ueda_obstr}
When $Y$ is non-singular and $\mathcal{I}$ is the defining ideal sheaf of $Y$, it clearly follows from the construction that our obstruction classes coincide with those defined in \cite[\S 3.2]{K2021} up to multiplication by a non-zero constant. 
In particular, when $Y$ is a compact non-singular hypersurface of $\Omega$ with unitary flat normal bundle, 
$\mathcal{I}$ is the defining ideal sheaf of $Y$, 
and $L = [Y]\otimes \widetilde{N_{Y/X}}^{-1}$, our obstruction classes coincides with Ueda classes \cite{U} up to multiplication by a non-zero constant (Also refer to \cite{N}). Here $\widetilde{N_{Y/X}}$ is {\it the flat extension} of the normal bundle $N_{Y/X}$: i.e., $\widetilde{N_{Y/X}}$ is the unitary flat line bundle on a neighborhood of $Y$ in $\Omega$ whose restriction to $Y$ coincides with $N_{Y/X}$. The germ of such a unitary flat line bundle uniquely exists around $Y$ by virtue of Lemma \ref{lem:prelim_limHq} applied by letting $G={\rm U}(1)$. 
\end{remark}

\begin{lemma}\label{lem:well-def_obstretc}
Let $L$ be a holomorphic line bundle on a neighborhood of $Y$ such that $(i_{\mu-1}^{\mathcal{I}})^*\mathcal{O}_V(L)$ is trivial ($\mu\geq 1$). 
Then the following hold: \\
$(i)$ $(i_{\mu}^{\mathcal{I}})^*\mathcal{O}_V(L)$ is trivial if and only if $0\in U_\mu(\mathcal{I}, L)$. \\
$(ii)$ When the $\mu$-th obstruction class is well-defined, $(i_{\mu}^{\mathcal{I}})^*\mathcal{O}_V(L)$ is trivial if and only if $u_\mu(\mathcal{I}, L)=0$. \\
$(iii)$ The $\mu$-th obstruction class is well-defined if $Y$ is compact and the natural map 
\[
\varinjlim_V H^0(V, \mathcal{O}_V) \to H^0(\Omega, \mathcal{O}_{\Omega}/\mathcal{I}^\mu)
\]
is surjective. 
\end{lemma}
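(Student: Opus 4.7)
The plan is to reduce part (i) to Lemma \ref{lem:iikae_Ltriv_fund} (ii) applied one step further, to deduce (ii) as a formal consequence of (i), and to prove (iii) by establishing the stronger statement that the map $\alpha_{\mathcal{I}, \mu}$ is injective (so in particular $\alpha_{\mathcal{I}, \mu}^{-1}([L]_Y)$, hence $U_\mu(\mathcal{I}, L)$, is a singleton) under the stated hypothesis.

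For (i), I would apply Lemma \ref{lem:iikae_Ltriv_fund} (ii) with $\mu$ replaced by $\mu+1$: the triviality of $(i_\mu^{\mathcal{I}})^*\mathcal{O}_V(L)$ amounts to $[L]_Y\in \mathrm{Image}\,\alpha_{\mathcal{I},\mu+1}$. Since $\alpha_{\mathcal{I},\mu+1}$ factors through $\alpha_{\mathcal{I},\mu}$ via the natural map $j\colon \varinjlim_V H^1(V,\mathcal{I}^{\mu+1})\to \varinjlim_V H^1(V,\mathcal{I}^\mu)$, this membership is equivalent to the existence of some $\xi\in \alpha_{\mathcal{I},\mu}^{-1}([L]_Y)$ lying in $\mathrm{Image}\,j$. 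Exactness of $(\ref{exseq:def_umu})$ identifies $\mathrm{Image}\,j$ with the kernel of $\varinjlim_V H^1(V,\mathcal{I}^\mu)\to H^1(Y,\mathcal{I}^\mu/\mathcal{I}^{\mu+1})$, so $\xi\in \mathrm{Image}\,j$ means exactly $u_\mu(\mathcal{I},L;\xi)=0$, i.e. $0\in U_\mu(\mathcal{I},L)$. Part (ii) is then immediate from (i): when the obstruction is well-defined, $U_\mu(\mathcal{I},L)=\{u_\mu(\mathcal{I},L)\}$, so $0\in U_\mu(\mathcal{I},L)$ iff $u_\mu(\mathcal{I},L)=0$.

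For (iii), the strategy is to show that $\alpha_{\mathcal{I},\mu}$ is injective, which forces $\alpha_{\mathcal{I},\mu}^{-1}([L]_Y)$ to have at most one element and hence $U_\mu(\mathcal{I},L)$ to be a singleton. Using the commutative diagram preceding Lemma \ref{lem:iikae_Ltriv_fund}, $\ker \alpha_{\mathcal{I},\mu}$ transfers via the exponential isomorphism to the kernel of $\varinjlim_V H^1(V,(\mathcal{I}^\mu)^*)\to \varinjlim_V H^1(V,\mathcal{O}_V^*)$. The long exact sequence attached to $0\to (\mathcal{I}^\mu)^*\to \mathcal{O}_V^*\to \mathcal{O}_V^*/(\mathcal{I}^\mu)^*\to 0$ (together with Lemma \ref{lem:prelim_limHq}) identifies this kernel with the cokernel of $\varinjlim_V H^0(V,\mathcal{O}_V^*)\to \varinjlim_V H^0(V,\mathcal{O}_V^*/(\mathcal{I}^\mu)^*)$, so it suffices to prove that this last map is surjective.

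The crux is then the following lifting argument. Since $\mathcal{O}_V^*/(\mathcal{I}^\mu)^*$ is supported on $Y$ and, by the identity $1+\mathcal{I}^\mu=(\mathcal{I}^\mu)^*\subset \mathcal{O}_\Omega^*$, agrees with the unit sheaf $(\mathcal{O}_\Omega/\mathcal{I}^\mu)^*$, Lemma \ref{lem:prelim_limHq} presents the target as a subset of $H^0(\Omega,\mathcal{O}_\Omega/\mathcal{I}^\mu)$. Given $\eta$ in this target, the surjectivity hypothesis yields $\widetilde{\eta}\in \varinjlim_V H^0(V,\mathcal{O}_V)$ lifting $\eta$ as a section of $\mathcal{O}_V$; because $\eta$ is a unit, $\widetilde{\eta}|_Y$ is nowhere vanishing, and compactness of $Y$ ensures that $\widetilde{\eta}$ remains nowhere vanishing on some open neighborhood $V_0$ of $Y$, whence $\widetilde{\eta}|_{V_0}\in H^0(V_0,\mathcal{O}_{V_0}^*)$ provides the required lift of $\eta$. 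The main obstacle I anticipate is the clean bookkeeping of the unit quotient sheaves and the verification that the additive surjectivity hypothesis really does propagate to the multiplicative setting; this is precisely where compactness of $Y$ is used in an essential way, since it is what converts a pointwise non-vanishing condition on $Y$ into invertibility on an actual open neighborhood.
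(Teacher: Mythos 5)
Your argument is correct. Parts $(i)$ and $(ii)$ are exactly what the paper has in mind (it declares them clear from Lemma \ref{lem:iikae_Ltriv_fund} $(ii)$ and the exactness of $(\ref{exseq:def_umu})$; your factorization $\alpha_{\mathcal{I},\mu+1}=\alpha_{\mathcal{I},\mu}\circ j$ is the right way to spell this out). For part $(iii)$, however, you take a genuinely different route. The paper does not prove injectivity of $\alpha_{\mathcal{I},\mu}$ via the multiplicative long exact sequence; it first uses compactness of $Y$ and the maximum principle to show that $H^1(Y,\underline{\mathbb{Z}}_Y)\to H^1(Y,\mathcal{O}_Y)$ is injective, feeds this into Lemma \ref{lem:degoffreedom_xi} to conclude that any two elements of $\alpha_{\mathcal{I},\mu}^{-1}([L]_Y)$ differ by an element of ${\rm Ker}(\varinjlim_V H^1(V,\mathcal{I}^\mu)\to\varinjlim_V H^1(V,\mathcal{O}_V))$, and then kills that kernel using the surjectivity hypothesis and the additive sequence $(\ref{exseq:degoffreedomxi})$. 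Your proof instead stays entirely in the multiplicative category: the sequence $0\to(\mathcal{I}^\mu)^*\to\mathcal{O}_\Omega^*\to\mathcal{O}_\Omega^*/(\mathcal{I}^\mu)^*\to 0$ reduces everything to lifting unit sections of $\mathcal{O}_\Omega/\mathcal{I}^\mu$ to nowhere-vanishing holomorphic functions near $Y$, which the additive surjectivity hypothesis supplies. This buys two things: it never touches $H^1(Y,\mathbb{Z})$, and it yields the slightly stronger conclusion that $\alpha_{\mathcal{I},\mu}$ itself is injective. One small correction: contrary to your closing remark, your argument does not actually use compactness of $Y$ anywhere — the non-vanishing locus of the lift $\widetilde{\eta}$ is automatically an open set containing $Y$, no compactness required. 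The paper's detour through Lemma \ref{lem:degoffreedom_xi} is motivated by economy elsewhere in the architecture: that lemma is reused (without any surjectivity hypothesis) to establish the well-definedness of the class $v(\mathcal{I},L)$ in Lemma \ref{lem:obst_v_welldef}, where your multiplicative shortcut would not apply.
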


\begin{proof}
The assertions $(i)$ and $(ii)$ are clear from Lemma \ref{lem:iikae_Ltriv_fund} $(ii)$ and the definitions and the exactness of the sequence $(\ref{exseq:def_umu})$. 
Assume that $Y$ is compact. 
Then it follows from the maximum principle that the exponential map $H^0(Y, \mathcal{O}_Y)\to H^0(Y, \mathcal{O}_Y^*)$ is surjective. 
Therefore, by considering the long exact sequence induced from the exponential short exact sequence on $Y$, we infer that the natural map $H^1(Y, \underline{\mathbb{Z}}_Y)\to H^1(Y, \mathcal{O}_Y)$ is injective. 
Thus the assertion $(iii)$ follows from Lemma \ref{lem:degoffreedom_xi} below, by considering 
the exact sequence 
\begin{equation}\label{exseq:degoffreedomxi}
\varinjlim_V H^0(V, \mathcal{O}_V) \to \varinjlim_V H^0(V, \mathcal{O}_V/\mathcal{I}^\mu)
\to \varinjlim_V H^1(V, \mathcal{I}^\mu) \to \varinjlim_V H^1(V, \mathcal{O}_V), 
\end{equation}
which is induced from the exact sequence $0 \to \mathcal{I}^{\mu}|_Y \to\mathcal{O}_\Omega|_Y \to (\mathcal{O}_\Omega/\mathcal{I}^\mu)|_Y\to 0$ (Here we again applied Lemma \ref{lem:prelim_limHq}). 
\end{proof}

\begin{lemma}\label{lem:degoffreedom_xi}
Let $\mu$ be a positive integer and $L$ be a holomorphic line bundle on a neighborhood of $Y$. 
Assume that the natural map $H^1(Y, \underline{\mathbb{Z}}_Y)\to H^1(Y, \mathcal{O}_Y)$ is injective. 
Then, for any elements $\xi$ and $\xi'$ of $\alpha_{\mathcal{I}, \mu}^{-1}([L]_{Y})$, $\xi-\xi'$ is an element of the kernel of the natural map 
$\textstyle\varinjlim_V H^1(V, \mathcal{I}^\mu) \to \textstyle\varinjlim_V H^1(V, \mathcal{O}_V)$. 
\end{lemma}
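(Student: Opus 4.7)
The strategy is to use the factorization $\alpha_{\mathcal{I},\mu} = \mathrm{exp} \circ \beta$, where $\beta \colon \textstyle\varinjlim_V H^1(V, \mathcal{I}^\mu) \to \textstyle\varinjlim_V H^1(V, \mathcal{O}_V)$ is the natural map in the lemma and $\mathrm{exp}$ is the map on cohomology induced by $f \mapsto \exp(2\pi\sqrt{-1}\,f)$; this factorization is visible in the commutative diagram preceding Lemma \ref{lem:iikae_Ltriv_fund}. Combined with the exponential long exact sequence on neighborhoods of $Y$ and the injectivity hypothesis on $Y$, this will force $\beta(\xi - \xi') = 0$.

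I will first note that since $\alpha_{\mathcal{I},\mu}(\xi) = \alpha_{\mathcal{I},\mu}(\xi') = [L]_Y$ in $\textstyle\varinjlim_V H^1(V, \mathcal{O}_V^*)$, the factorization gives $\mathrm{exp}(\beta(\xi) - \beta(\xi')) = 1$. The direct limit over $V$ of the long exact sequence associated to $0 \to \underline{\mathbb{Z}}_V \to \mathcal{O}_V \to \mathcal{O}_V^* \to 0$, combined with Lemma \ref{lem:prelim_limHq} applied to $\underline{\mathbb{Z}}_V$, identifies the kernel of $\mathrm{exp}$ with the image of a natural map $\iota \colon H^1(Y, \underline{\mathbb{Z}}_Y) \to \textstyle\varinjlim_V H^1(V, \mathcal{O}_V)$. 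Hence I obtain some $z \in H^1(Y, \underline{\mathbb{Z}}_Y)$ with $\iota(z) = \beta(\xi - \xi')$, and the task reduces to showing $z = 0$.

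Next, I will show that the image of $\beta(\xi - \xi')$ under the natural map $r \colon \textstyle\varinjlim_V H^1(V, \mathcal{O}_V) \to H^1(Y, \mathcal{O}_Y)$ (arising from the sheaf surjection $\mathcal{O}_\Omega|_Y \twoheadrightarrow \mathcal{O}_Y$ together with Lemma \ref{lem:prelim_limHq}) vanishes. The key observation is the inclusion $\mathcal{I}^\mu \subset \mathcal{I}_Y$, which holds because $\mathcal{I} \subset \sqrt{\mathcal{I}} = \mathcal{I}_Y$. Consequently the composition of sheaves $\mathcal{I}^\mu|_Y \hookrightarrow \mathcal{O}_\Omega|_Y \twoheadrightarrow \mathcal{O}_Y$ on $Y$ is identically zero, so $r \circ \beta = 0$ on $\textstyle\varinjlim_V H^1(V, \mathcal{I}^\mu)$. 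In particular $r(\beta(\xi - \xi')) = 0$.

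Finally, naturality of the exponential sequence under the sheaf map $\mathcal{O}_\Omega|_Y \to \mathcal{O}_Y$ produces a commutative square whose top row is $\textstyle\varinjlim_V H^1(V, \underline{\mathbb{Z}}_V) \to \textstyle\varinjlim_V H^1(V, \mathcal{O}_V)$, whose bottom row is $H^1(Y, \underline{\mathbb{Z}}_Y) \to H^1(Y, \mathcal{O}_Y)$, whose left vertical arrow is the isomorphism of Lemma \ref{lem:prelim_limHq}, and whose right vertical arrow is $r$. Chasing this square, the image of $z$ in $H^1(Y, \mathcal{O}_Y)$ equals $r(\iota(z)) = r(\beta(\xi - \xi')) = 0$, so the injectivity hypothesis forces $z = 0$, and therefore $\beta(\xi - \xi') = \iota(z) = 0$. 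The only technical point worth handling with care is the bookkeeping with Lemma \ref{lem:prelim_limHq}, needed to ensure the two commutative squares above really commute; once these identifications are made explicit, the conclusion is a short diagram chase with no analytic difficulty.
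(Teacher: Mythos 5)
Your proposal is correct and follows essentially the same route as the paper: lift $\beta(\xi-\xi')$ to an integral class via the exponential sequence, observe that its image in $H^1(Y,\mathcal{O}_Y)$ vanishes because $\mathcal{I}^\mu\subset\mathcal{I}_Y$ kills the cocycle on $Y$, and then invoke the injectivity hypothesis together with Lemma \ref{lem:prelim_limHq}. The only difference is presentational --- you run the argument by naturality of the exponential sequence and the vanishing of the sheaf composite $\mathcal{I}^\mu|_Y\to\mathcal{O}_Y$, where the paper carries out the same step by an explicit \v{C}ech cocycle computation.
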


\begin{proof}
Denote by $\zeta$ the image of $\xi-\xi'$ by the natural map 
$\textstyle\varinjlim_V H^1(V, \mathcal{I}^\mu) \to \textstyle\varinjlim_V H^1(V, \mathcal{O}_V)$. 
As $\alpha_{\mathcal{I}, \mu}(\xi-\xi')=1$, it follows from the exponential exact sequence that $\zeta$ is in the image of the natural map $\textstyle\varinjlim_V H^1(V, \underline{\mathbb{Z}}_V)\to \textstyle\varinjlim_V H^1(V, \mathcal{O}_V)$. 
Therefore there exists an open neighborhood $V$ of $Y$, an open covering $\{V_j\}$ of $V$, $\{(V_{jk}, f_{jk})\}\in \check{Z}^1(\{V_j\}, \mathcal{I}^\mu)$ whose class coincides with $\xi-\xi'$, $\{(V_{jk}, n_{jk})\}\in \check{Z}^1(\{V_j\}, \underline{\mathbb{Z}}_V)$, and $h_j\in \Gamma(V_j, \mathcal{O}_\Omega)$ for each $j$ such that 
$n_{jk}=f_{jk}-h_j+h_k$ 
holds on each $V_j\cap V_k$. 
Note that the class $\lambda\in \textstyle\varinjlim_V H^1(V, \underline{\mathbb{Z}}_V)$ represented by $\{(V_{jk}, n_{jk})\}$ is mapped to $\zeta$ by the natural map.  
As $\mu\geq 1$ and $Y={\rm Supp}\,\mathcal{O}_\Omega/\mathcal{I}$, we have $n_{jk}=h_j+h_k$ on each $Y\cap V_j\cap V_k$. Therefore $\lambda|_Y\in H^1(Y, \underline{\mathbb{Z}}_Y)$ is in the kernel of the natural map $H^1(Y, \underline{\mathbb{Z}}_Y)\to H^1(Y, \mathcal{O}_Y)$, which is $0$ by the assumption. 
Thus $\lambda|_Y=0$. From Lemma \ref{lem:prelim_limHq}, we infer that $\lambda=0$. Therefore we have $\zeta=0$, since $\lambda\mapsto \zeta$. \end{proof}

\begin{remark}\label{rmk:van_of_N-impliesj^*triv}
Let $L$ be a holomorphic line bundle on a neighborhood of $Y$ such that $(i_{\mu-1}^{\mathcal{I}})^*\mathcal{O}_V(L)$ is trivial ($\mu\geq 1$). 
Then $U_\mu(\mathcal{I}, L) = \{0\}$ holds if $H^1(Y, \mathcal{I}^\mu/\mathcal{I}^{\mu+1})=0$, since $U_\mu(\mathcal{I}, L)$ is a non-empty subset of $H^1(Y, \mathcal{I}^\mu/\mathcal{I}^{\mu+1})$ by definition. 
Thus one can infer from Lemma \ref{lem:well-def_obstretc} $(i)$ that $(i_{\mu}^{\mathcal{I}})^*\mathcal{O}_V(L)$ is trivial in this case. 
By an inductive application of this argument, we immediately obtain the following fact: When $H^1(Y, \mathcal{I}^\mu/\mathcal{I}^{\mu+1})=0$ holds for any $\mu\geq 1$, $i_{\infty}^*\mathcal{O}_V(L)$ is trivial for any holomorphic line bundle $L$ on a neighborhood of $Y$ such that $(i_{0}^{\mathcal{I}})^*\mathcal{O}_V(L)$ is trivial (Recall $Y_\infty=Y_\infty^{\mathcal{I}}$ \cite[Lemme 2.1]{Ba}). 
It seems worthwhile to emphasize that the well-definedness of the obstruction classes plays an important role to apply this inductive argument. 
See also the argument in the proof of Lemma \ref{lem:suff_cond_fplb} $(ii)$. 
\end{remark}

We give some fundamental sufficient conditions for a line bundle to be trivial on a neighborhood of $Y$ as follows. 
\begin{lemma}\label{lem:suff_cond_fplb}
For a holomorphic line bundle $L$ on a neighborhood of $Y$ and a positive integer $\mu_0$, there exists a neighborhood $V$ of $Y$ such that $L|_V$ is holomorphically trivial if either of the following conditions holds: \\
$(i)$ $(i_{\mu_0-1}^{\mathcal{I}})^*\mathcal{O}_V(L)$ is trivial and $\textstyle\varinjlim_V H^1(V, \mathcal{I}^{\mu_0})=0$, or\\
$(ii)$ $(i_{\mu-1}^{\mathcal{I}})^*\mathcal{O}_V(L)$ is trivial and the $\mu$-th obstruction class is well-defined for any $\mu\geq \mu_0$, and the image of the natural map 
\[
\varprojlim_\mu \varinjlim_V H^1(V, \mathcal{I}^\mu) \to \varinjlim_V H^1(V, \mathcal{I}^{\mu_0})
\]
is $\{0\}$. 
\end{lemma}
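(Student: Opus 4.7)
The plan in both cases is to reduce the problem to showing $[L]_{Y} = 1 \in \varinjlim_V H^1(V, \mathcal{O}_V^*)$, after which Lemma \ref{lem:iikae_Ltriv_fund} $(i)$ immediately yields the triviality of $L$ on some neighborhood of $Y$.

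For $(i)$ this is a one-liner: the triviality of $(i_{\mu_0-1}^{\mathcal{I}})^*\mathcal{O}_V(L)$ combined with Lemma \ref{lem:iikae_Ltriv_fund} $(ii)$ places $[L]_{Y}$ in $\mathrm{Image}\,\alpha_{\mathcal{I}, \mu_0}$, but the source $\varinjlim_V H^1(V, \mathcal{I}^{\mu_0})$ of $\alpha_{\mathcal{I}, \mu_0}$ is assumed to vanish, so $[L]_{Y} = \alpha_{\mathcal{I}, \mu_0}(0) = 1$.

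For $(ii)$, my plan is to construct inductively a compatible system $\{\xi_\mu\}_{\mu\geq \mu_0}$ with $\xi_\mu \in \alpha_{\mathcal{I}, \mu}^{-1}([L]_{Y})$ such that the transition map $\varinjlim_V H^1(V, \mathcal{I}^{\mu+1}) \to \varinjlim_V H^1(V, \mathcal{I}^{\mu})$ sends $\xi_{\mu+1}$ to $\xi_\mu$. The initial $\xi_{\mu_0}$ is supplied by Lemma \ref{lem:iikae_Ltriv_fund} $(ii)$ applied to the triviality of $(i_{\mu_0-1}^{\mathcal{I}})^*\mathcal{O}_V(L)$. For the inductive step, given $\xi_\mu$, I would combine the well-definedness of $u_\mu(\mathcal{I}, L)$ (hypothesis) with the triviality of $(i_\mu^{\mathcal{I}})^*\mathcal{O}_V(L)$ (also hypothesis, since $\mu \geq \mu_0$ means $\mu \geq \mu_0 - 1$): Lemma \ref{lem:well-def_obstretc} $(ii)$ then forces $U_\mu(\mathcal{I}, L) = \{0\}$, so that the image of $\xi_\mu$ in $H^1(Y, \mathcal{I}^\mu/\mathcal{I}^{\mu+1})$ vanishes and the exact sequence $(\ref{exseq:def_umu})$ delivers a lift $\xi_{\mu+1} \in \varinjlim_V H^1(V, \mathcal{I}^{\mu+1})$; commutativity of the defining diagram of $\alpha_{\mathcal{I}, \bullet}$ (preceding Lemma \ref{lem:iikae_Ltriv_fund}) ensures $\alpha_{\mathcal{I}, \mu+1}(\xi_{\mu+1}) = [L]_{Y}$, closing the step.

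With the system in hand, $(\xi_\mu)_{\mu\geq \mu_0}$ defines an element of $\varprojlim_\mu \varinjlim_V H^1(V, \mathcal{I}^{\mu})$ whose image in $\varinjlim_V H^1(V, \mathcal{I}^{\mu_0})$ is $\xi_{\mu_0}$; the hypothesis of $(ii)$ forces this image to be $0$, hence $\xi_{\mu_0} = 0$ and $[L]_{Y} = \alpha_{\mathcal{I}, \mu_0}(\xi_{\mu_0}) = 1$, completing the proof. The argument presents no genuine technical obstacle, and the one conceptual point worth emphasizing is the role of well-definedness of $u_\mu(\mathcal{I}, L)$: it guarantees that the obstruction vanishes on \emph{every} element of $\alpha_{\mathcal{I}, \mu}^{-1}([L]_{Y})$, not merely on some carefully chosen one, which is precisely what makes the chaining of the $\xi_\mu$ into a compatible inverse system possible.
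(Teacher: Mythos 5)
Your proposal is correct and follows essentially the same route as the paper: for $(i)$ the observation that $\alpha_{\mathcal{I},\mu_0}^{-1}([L]_Y)\subset\varinjlim_V H^1(V,\mathcal{I}^{\mu_0})=\{0\}$ forces $[L]_Y=1$, and for $(ii)$ the inductive lifting of $\xi_{\mu_0}$ through the exact sequence $(\ref{exseq:def_umu})$ to an element of $\varprojlim_\mu\varinjlim_V H^1(V,\mathcal{I}^\mu)$, whose vanishing image then gives $\xi_{\mu_0}=0$. Your closing remark on why well-definedness of the obstruction classes is what makes the chaining work is exactly the point the paper itself emphasizes in Remark \ref{rmk:van_of_N-impliesj^*triv}.
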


\begin{proof}
By Lemma \ref{lem:iikae_Ltriv_fund} $(ii)$, one can take an element $\xi_{\mu_0} \in \alpha_{\mathcal{I}, \mu_0}^{-1}([L]_{Y})$ if $(i_{\mu_0-1}^{\mathcal{I}})^*\mathcal{O}_V(L)$ is trivial. 
As $\alpha_{\mathcal{I}, \mu_0}^{-1}([L]_{Y})\subset \textstyle\varinjlim_V H^1(V, \mathcal{I}^{\mu_0})$, $\xi_{\mu_0}$ is necessarily equal to zero under the condition $(i)$. Therefore we have $[L]_{Y}=\alpha_{\mathcal{I}, \mu_0}(0) = 1$. Thus the assertion under the condition $(i)$ follows from Lemma \ref{lem:iikae_Ltriv_fund} $(i)$. 

Assume the condition $(ii)$. 
Then it follows from Lemma \ref{lem:well-def_obstretc} $(ii)$ that $u_{\mu_0}(\mathcal{I}, L; \xi_{\mu_0})=u_{\mu_0}(\mathcal{I}, L)$ is equal to $0$. 
Therefore, by the exactness of the sequence $(\ref{exseq:def_umu})$, one can take an element $\xi_{\mu_0+1}\in \textstyle\varinjlim_V H^1(V, \mathcal{I}^{\mu_0+1})$ which is mapped to $\xi_{\mu_0}$ by the natural map. 
Inductively repeating this argument, one can construct an element 
$\xi_\infty \in \textstyle\varprojlim_\mu \varinjlim_V H^1(V, \mathcal{I}^\mu)$ which is mapped to $\xi_{\mu_0}$ by the natural map. 
Thus it follows from the assumption that $\xi_{\mu_0}=0$, from which the assertion follows by applying the same argument as above. 
\end{proof}

\section{Proof of a weak variant of Theorem \ref{thm:main} $(i)$ and definition of $v(\mathcal{I}, L)$}

\subsection{Proof of a weak variant of Theorem \ref{thm:main} $(i)$}\label{subsection:weakthmmainiandstr}

In order to make our strategy in the proof of Theorem \ref{thm:main} clear, let us first give a proof of (a part of) Theorem \ref{thm:main} $(i)$ under the additional assumption of the vanishing of $H^1(X, \mathcal{I}^\mu)$ for $\mu$ large: 
\begin{proposition}[{A weak variant of Theorem \ref{thm:main} $(i)$}]\label{prop:simple_thmmaini}
Let $X$ be a compact K\"ahler manifold, $\mathcal{I}$ be a non-trivial coherent ideal sheaf of $\mathcal{O}_X$, 
and $Y\subset X$ be the support of $\mathcal{O}_X/\mathcal{I}$. 
Assume that the natural map $H_c^{2}(M, \mathcal{O}_M)\to H^{2}(M, \mathcal{O}_M)$ is injective, where $M=X\setminus Y$. 
Assume also that $H^1(X, \mathcal{I}^\mu)=0$ holds for $\mu$ sufficiently large. 
Then the FPLB holds on neighborhoods $Y$ of $X$. 
\end{proposition}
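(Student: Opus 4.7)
The plan is to invoke Lemma~\ref{lem:suff_cond_fplb}~$(i)$ with $\mu_0$ chosen large enough so that $H^1(X,\mathcal{I}^{\mu_0})=0$. The formal triviality of $L$ on $Y_\infty$, together with Lemma~\ref{lem:rel_probs} and Lemma~\ref{lem:iikae_Ltriv_fund}~$(ii)$, already gives $[L]_{Y}\in{\rm Image}\,\alpha_{\mathcal{I},\mu_0}$, so the proposition reduces to showing $\textstyle\varinjlim_V H^1(V,\mathcal{I}^{\mu_0})=0$.

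To establish this vanishing, I would represent an arbitrary class by a $\delbar$-closed form $\omega\in\Gamma(V,\mathcal{I}^{\mu_0}\mathcal{A}_V^{0,1})$ on some neighborhood $V$ of $Y$ in $X$, using the Dolbeault description $(\ref{resolution:soft_dolbeault_general})$. Pick a cutoff $\rho\in C_c^\infty(V)$ that is identically $1$ on a smaller neighborhood $V'$ of $Y$. Then $\widetilde{\omega}:=\rho\omega$, extended by zero, belongs to $\Gamma(X,\mathcal{I}^{\mu_0}\mathcal{A}_X^{0,1})$, and $\eta:=\delbar\widetilde{\omega}=\delbar\rho\wedge\omega$ is a smooth $(0,2)$-form whose support lies in the compact set $\overline{V\setminus V'}\subset M=X\setminus Y$. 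Hence $\eta$ defines a class $[\eta]_c\in H_c^{0,2}(M)\cong H_c^2(M,\mathcal{O}_M)$.

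The decisive observation is that the image of $[\eta]_c$ in $H^{0,2}(M)\cong H^2(M,\mathcal{O}_M)$ is zero, witnessed by the primitive $\widetilde{\omega}|_M$. Hence the injectivity hypothesis yields $\zeta\in\Gamma_c(M,\mathcal{A}_M^{0,1})$ with $\delbar\zeta=\eta$ on $M$. Extending $\zeta$ by zero to $\widetilde{\zeta}$ on $X$, one has $\widetilde{\zeta}\equiv 0$ on some open neighborhood of $Y$ (because ${\rm supp}\,\zeta$ is a compact subset of $M$, disjoint from the closed set $Y$), so $\widetilde{\zeta}\in\Gamma(X,\mathcal{I}^{\mu_0}\mathcal{A}_X^{0,1})$; consequently $\widetilde{\omega}-\widetilde{\zeta}$ is a $\delbar$-closed global section of $\mathcal{I}^{\mu_0}\mathcal{A}_X^{0,1}$.

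The hypothesis $H^1(X,\mathcal{I}^{\mu_0})=0$ then produces $\phi\in\Gamma(X,\mathcal{I}^{\mu_0}\mathcal{A}_X^{0,0})$ with $\widetilde{\omega}-\widetilde{\zeta}=\delbar\phi$. Restricting to a neighborhood $V''\subset V'$ of $Y$ on which $\widetilde{\zeta}\equiv 0$ gives $\omega=\delbar\phi|_{V''}$, so the chosen class in $\textstyle\varinjlim_V H^1(V,\mathcal{I}^{\mu_0})$ vanishes, and the proof is then completed by Lemma~\ref{lem:suff_cond_fplb}~$(i)$. The main obstacle I expect is orchestrating the two vanishing hypotheses so that they dovetail: the injectivity of $H_c^2(M,\mathcal{O}_M)\to H^2(M,\mathcal{O}_M)$ is precisely what allows me to trade the noncompact $\delbar$-primitive $\widetilde{\omega}|_M$ on $M$ for a compactly supported one $\zeta$, and this trade is exactly the correction needed to promote $\widetilde{\omega}$ to a globally $\delbar$-closed form on the compact manifold $X$, which can then be killed by $H^1(X,\mathcal{I}^{\mu_0})=0$.
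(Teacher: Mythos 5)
Your overall strategy is the same as the paper's: reduce to showing $\varinjlim_V H^1(V,\mathcal{I}^{\mu_0})=0$ for $\mu_0$ with $H^1(X,\mathcal{I}^{\mu_0})=0$, and obtain that vanishing by playing the hypothesis $H^1(X,\mathcal{I}^{\mu_0})=0$ against the injectivity of $H_c^{0,2}(M)\to H^{0,2}(M)$. Your cutoff construction is exactly the cochain-level realization of the two exact sequences the paper uses abstractly (the Mayer--Vietoris sequence for the cover $\{V,M\}$ of $X$ with coefficients in $\mathcal{I}^{\mu_0}$, and the sequence $(\ref{exseq:long_ex_seq_H_cHlimH})$, whose connecting map is precisely your $\eta=\delbar(\rho\omega)$ per Remark \ref{rmk:explicit_repre_renketu}). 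The trade you describe at the end --- swapping the non-compactly-supported primitive $\widetilde{\omega}|_M$ for a compactly supported one $\zeta$ and then killing the corrected global $\delbar$-closed form by $H^1(X,\mathcal{I}^{\mu_0})=0$ --- is the right mechanism and matches the paper's logic.

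There is, however, one step you use outside the generality in which it is available. Your argument represents classes of $H^1(V,\mathcal{I}^{\mu_0})$ and $H^1(X,\mathcal{I}^{\mu_0})$ by the complex $\Gamma(\cdot,\mathcal{I}^{\mu_0}\mathcal{A}^{0,\bullet})$, i.e.\ you invoke $(\ref{resolution:soft_dolbeault_general})$ both to pick the representative $\omega$ and to produce the potential $\phi$. The paper establishes the resolution $0\to\mathcal{J}\to\mathcal{J}\mathcal{A}^{0,0}\to\mathcal{J}\mathcal{A}^{0,1}\to\cdots$ only for \emph{flat} (equivalently, locally free) ideal sheaves $\mathcal{J}$, and is careful about this elsewhere (see the proof of Lemma \ref{lem:van_v_jetnu}, where local freeness of $\mathcal{O}_X(-\mu D)$ is explicitly noted before $(\ref{resolution:soft_dolbeault_general})$ is used). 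In Proposition \ref{prop:simple_thmmaini} the ideal $\mathcal{I}$ is an arbitrary non-trivial coherent ideal sheaf --- e.g.\ the ideal of a higher-codimension or singular $Y$ --- so $\mathcal{I}^{\mu_0}$ need not be flat, and the exactness of $\mathcal{I}^{\mu_0}\mathcal{A}^{0,\bullet}$ as a resolution of $\mathcal{I}^{\mu_0}$ is not among the tools the paper has set up. This is the one genuine gap; it is repairable (for instance by invoking the flatness of $\mathcal{A}_X^{0,0}$ over $\mathcal{O}_X$ to show that the Dolbeault resolution is universally exact, or by rewriting your cutoff argument in \v{C}ech terms), but as written the step is unjustified. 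The paper's proof is formulated purely sheaf-theoretically via Mayer--Vietoris precisely so that no Dolbeault-type representation of $H^1(\cdot,\mathcal{I}^{\mu_0})$ is ever needed; only the sheaf $\mathcal{O}_M$ on the open manifold $M$, where $(\ref{resolution:soft_dolbeault})$ applies without caveats, is treated analytically. If you either add the flatness justification or assume $\mathcal{I}^{\mu_0}$ locally free (which already covers Theorem \ref{thm:main} $(ii)$), your proof is complete.
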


\begin{proof}
For a positive integer $\mu$, we have the following exact sequence by considering the Mayer--Vietoris sequence \cite[\S II 5.10]{I} and by applying the exactness of the functor $\textstyle\varinjlim$: 
\[
H^1(X, \mathcal{I}^\mu)
\to H^1(M, \mathcal{O}_M)\oplus \varinjlim_V H^1(V, \mathcal{I}^\mu)
\to \varinjlim_V H^1(V\setminus Y, \mathcal{O}_{V\setminus Y}). 
\]
From this exact sequence and the assumption, we infer that the natural map
\[
H^{0, 1}(M)\oplus \varinjlim_V H^1(V, \mathcal{I}^\mu)
\to \varinjlim_V H^{0, 1}(V\setminus Y)
\]
is injective when $\mu$ is sufficiently large. 
Therefore the natural map
\[
\varinjlim_V H^1(V, \mathcal{I}^\mu)\to {\rm Coker}(H^{0, 1}(M)\to \varinjlim_V H^{0, 1}(V\setminus Y))
\]
is injective for $\mu$ sufficiently large. 
By virtue of the exactness of the sequence $(\ref{exseq:long_ex_seq_H_cHlimH})$, 
it follows from the injectivity assumption for $H_c^{2}(M, \mathcal{O}_M)\to H^{2}(M, \mathcal{O}_M)$ that ${\rm Coker}(H^{0, 1}(M)\to \textstyle\varinjlim_V H^{0, 1}(V\setminus Y))=0$ (Here we note that
\[
\varinjlim_V H^{0, 1}(V\setminus Y) = \varinjlim_K H^{0, 1}(M\setminus K)
\]
holds, where $K$ runs over all the compact subset of $M$, since $M\setminus (V\setminus Y)=X\setminus V$ is a compact subset of $M$ for any open neighborhood $V$ of $Y$ and $M\setminus K=W\setminus Y$ holds for a neighborhood $W=X\setminus K$ for any compact subset $K$ of $M$). 
Thus $\textstyle\varinjlim_V H^1(V, \mathcal{I}^\mu)=0$ holds for $\mu$ sufficiently large, from which one can easily deduce the assertion by applying 
Lemma \ref{lem:suff_cond_fplb} $(i)$. 
\end{proof}

We will show Theorem \ref{thm:main} $(i)$ without assuming the vanishing of $H^1(X, \mathcal{I}^\mu)$ along the same strategy as the proof of Proposition \ref{prop:simple_thmmaini} above. 
For this purpose, we will interpret the triviality of a line bundle in terms of curvature forms and reduce the problem to the $\ddbar$-problem on a neighborhood of $Y$ by applying a similar argument as in the proof of Proposition \ref{prop:kasiwara} so that one can ``disregard'' the information of $H^1(X, \mathcal{I}^\mu)$ from the viewpoint of harmonic theory. 
Following this strategy, it seems natural to regard (a suitable element of) the kernel of $H_c^{2}(M, \mathcal{O}_M)\to H^{2}(M, \mathcal{O}_M)$ as an obstruction for the triviality of a line bundle on a neighborhood of $Y$. 
Based on this observation, let us define an obstruction class $v(\mathcal{I}, L) \in {\rm Ker}(H_c^{2}(M, \mathcal{O}_M)\to H^{2}(M, \mathcal{O}_M))$ for a line bundle $L$ on a neighborhood of $Y$. 

\subsection{Definition of $v(\mathcal{I}, L)$}\label{section:def_obstr_class_vL}

Let $X$ be a compact complex manifold, $\mathcal{I}$ be a non-trivial coherent ideal sheaf of $\mathcal{O}_X$, and $Y\subset X$ be the support of $\mathcal{O}_X/\mathcal{I}$. 
Let $L$ be a holomorphic line bundle on a neighborhood of $Y$ such that $(i_{0}^{\mathcal{I}})^*\mathcal{O}_V(L)$ is trivial. 
For an element $\xi \in \alpha_{\mathcal{I}, 1}^{-1}([L]_{Y}) (\subset \textstyle\varinjlim_V H^1(V, \mathcal{I}))$, whose existence is ensured by Lemma \ref{lem:iikae_Ltriv_fund} $(ii)$, denote by $v(\mathcal{I}, L) \in H_c^{0, 2}(M)$ the image of $\xi$ by the composition 
\[
\varinjlim_V H^1(V, \mathcal{I})
\to \varinjlim_V H^1(V\setminus Y, \mathcal{O}_{V\setminus Y})
\to H_c^{0, 2}(M)
\]
of the restriction $\textstyle\varinjlim_V H^1(V, \mathcal{I})
\to \textstyle\varinjlim_V H^1(V\setminus Y, \mathcal{O}_{V\setminus Y})$ and the map 
$\textstyle\varinjlim_V H^{0, 1}(V\setminus Y)\to H_c^{0, 2}(M)$ which appears in the sequence $(\ref{exseq:long_ex_seq_H_cHlimH})$ (Recall the note in the proof of Proposition \ref{prop:simple_thmmaini}). 

\begin{lemma}\label{lem:obst_v_welldef}
The class 
$v(\mathcal{I}, L)$ is an element of ${\rm Ker}(H_c^{0, 2}(M)\to H^{0, 2}(M))$, and 
does not depend on the choice of $\xi \in \alpha_{\mathcal{I}, 1}^{-1}([L]_{Y})$. 
\end{lemma}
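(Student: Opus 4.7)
The plan is that both assertions are nearly formal consequences of the construction of $v(\mathcal{I}, L)$, the exactness of the sequence $(\ref{exseq:long_ex_seq_H_cHlimH})$, and Lemma \ref{lem:degoffreedom_xi} combined with the maximum-principle argument already used in Lemma \ref{lem:inj_unitaryflattohol}.

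For the first assertion, I would simply unwind the definition. By construction $v(\mathcal{I}, L)$ is the image of $\xi$ under the composite
\[
\varinjlim_V H^1(V, \mathcal{I}) \to \varinjlim_V H^1(V\setminus Y, \mathcal{O}_{V\setminus Y}) = \varinjlim_K H^{0,1}(M\setminus K) \to H_c^{0,2}(M),
\]
where the last arrow is the connecting morphism in $(\ref{exseq:long_ex_seq_H_cHlimH})$ (one uses the identification of the two directed limits recalled in the proof of Proposition \ref{prop:simple_thmmaini}). Since $(\ref{exseq:long_ex_seq_H_cHlimH})$ is exact, the image of this connecting morphism coincides with $\ker\bigl(H_c^{0,2}(M) \to H^{0,2}(M)\bigr)$, and the first claim follows with no further work.

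For the independence claim, the first step is to verify that the hypothesis of Lemma \ref{lem:degoffreedom_xi} is available. Because $X$ is compact, the analytic subset $Y$ is compact, so the maximum principle (applied on each connected component of $Y$) forces every holomorphic function on $Y$ to be locally constant; thus the exponential map $H^0(Y, \mathcal{O}_Y) \to H^0(Y, \mathcal{O}_Y^*)$ is surjective, and the long exact sequence of the exponential sheaf sequence on $Y$ yields the injectivity of $H^1(Y, \underline{\mathbb{Z}}_Y) \to H^1(Y, \mathcal{O}_Y)$. This is exactly the argument already carried out in the proof of Lemma \ref{lem:inj_unitaryflattohol}. Lemma \ref{lem:degoffreedom_xi} applied with $\mu = 1$ therefore gives, for any two $\xi, \xi' \in \alpha_{\mathcal{I}, 1}^{-1}([L]_Y)$, that $\xi - \xi'$ lies in the kernel of the natural map $\varinjlim_V H^1(V, \mathcal{I}) \to \varinjlim_V H^1(V, \mathcal{O}_V)$.

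The final step is to observe that the composite defining $v$ factors through $\varinjlim_V H^1(V, \mathcal{O}_V)$. Indeed, the inclusion $\mathcal{I} \hookrightarrow \mathcal{O}_V$ becomes the identity after restriction to $V\setminus Y$ (since $\mathcal{I}|_{V\setminus Y} = \mathcal{O}|_{V\setminus Y}$), so the restriction map $\varinjlim_V H^1(V, \mathcal{I}) \to \varinjlim_V H^1(V\setminus Y, \mathcal{O}_{V\setminus Y})$ agrees with the composition of $\varinjlim_V H^1(V, \mathcal{I}) \to \varinjlim_V H^1(V, \mathcal{O}_V)$ with restriction to $V\setminus Y$. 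Combining this factorization with the vanishing of $\xi - \xi'$ in $\varinjlim_V H^1(V, \mathcal{O}_V)$ from the previous step, we conclude that $\xi - \xi' \mapsto 0$ in $H_c^{0,2}(M)$, so $v(\mathcal{I}, L)$ is independent of $\xi$. I do not anticipate a serious obstacle: the only thing requiring care is bookkeeping of the various directed limits and making sure the naturality square above really does commute; no new computation beyond what has already appeared in \S 3 is needed.
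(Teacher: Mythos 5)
Your proposal is correct and follows essentially the same route as the paper: the first claim by exactness of the sequence $(\ref{exseq:long_ex_seq_H_cHlimH})$, and the second by Lemma \ref{lem:degoffreedom_xi} together with the factorization of the restriction map through $\varinjlim_V H^1(V, \mathcal{O}_V)$. Your explicit verification of the hypothesis of Lemma \ref{lem:degoffreedom_xi} (which the paper leaves implicit here, the same argument appearing in the proof of Lemma \ref{lem:well-def_obstretc} $(iii)$) is a welcome addition.
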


\begin{proof}
The former half of the assertion follows from 
the exactness of the sequence $(\ref{exseq:long_ex_seq_H_cHlimH})$. 
The latter half follows from Lemma \ref{lem:degoffreedom_xi}, since the restriction map $\textstyle\varinjlim_V H^1(V, \mathcal{I})
\to \textstyle\varinjlim_V H^1(V\setminus Y, \mathcal{O}_{V\setminus Y})$ factors through the natural map $\textstyle\varinjlim_V H^1(V, \mathcal{I}) \to \textstyle\varinjlim_V H^1(V, \mathcal{O}_V)$. 
\end{proof}


\section{Proof of Theorem \ref{thm:main} and its corollaries}\label{section:main_prf}

Here we show Theorem \ref{thm:main} and its corollaries (Corollaries \ref{cor:main1} and \ref{cor:main2}). 
The strategy of the proof is based on the observation we made in \S \ref{subsection:weakthmmainiandstr}. 
As a preparation, we first construct a suitable Hermitian metric on a holomorphic line bundle on a neighborhood of an analytic subset whose restriction to the $\mu$-th infinitesimal neighborhood is trivial in \S \ref{subsection:main_prf_pre1}. By using it, we investigate closer the obstruction class $v(\mathcal{I}, L)$ in \S \ref{subsection:main_prf_pre2}. 
By applying them, the proof of the theorem and corollaries will be given in \S \ref{subsection:main_prf_main}. 

\subsection{Hermitian metrics on a line bundle whose restriction to the $\mu$-th infinitesimal neighborhood is trivial}\label{subsection:main_prf_pre1}

In this subsection we show the following: 
\begin{lemma}\label{lem:refmetric}
Let $\mu$ be a positive integer, 
$\Omega$ a complex manifold, $Y$ a non-empty compact analytic subset of $X$, $\mathcal{I}\subset \mathcal{O}_\Omega$ a coherent ideal sheaf such that the support of $\mathcal{O}_\Omega/\mathcal{I}$ is $Y$, 
and $L$ be a holomorphic line bundle on a neighborhood $V$ of $Y$ in $X$. 
Assume that $(i_{\mu-1}^{\mathcal{I}})^*\mathcal{O}_V(L)$ is trivial. 
Then, for any element $\xi \in \alpha_{\mathcal{I}, \mu}^{-1}([L]_Y)$, 
there exist a neighborhood $W$ of $Y$ in $V$, a $C^\infty$ Hermitian metric $h_\mu$ on $L|_{W}$, and a $\delbar$-closed form $\eta_\mu \in \Gamma(W, \mathcal{I}^{\mu}\mathcal{A}_\Omega^{0, 1})$ such that 
$\sqrt{-1}\Theta_{h_\mu} = d(\eta_\mu + \overline{\eta_\mu})$ holds, and that the image of $-2\pi\xi$ by the natural map $\textstyle\varinjlim_{U}H^1(U, \mathcal{I}^\mu)\to \textstyle\varinjlim_{U}H^1(U, \mathcal{O}_U)=\textstyle\varinjlim_{U}H^{0, 1}(U)$ coincides with the restriction of the Dolbeault cohomology class $[\eta_\mu]\in H^{0, 1}(W)$. 
\end{lemma}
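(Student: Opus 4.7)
The plan is to realize both $\eta_\mu$ and $h_\mu$ from a single ``\v{C}ech primitive'' of $\xi$ built via a partition of unity, following the standard \v{C}ech-to-Dolbeault recipe. By Lemma~\ref{lem:iikae_Ltriv_fund}~(ii) and the definition of $\alpha_{\mathcal{I},\mu}$ via $f\mapsto \exp(2\pi\sqrt{-1}f)$, I would first fix a locally finite open cover $\{W_j\}$ of a neighborhood of $Y$ in $V$, a \v{C}ech cocycle $\{f_{jk}\}\in \check{Z}^1(\{W_j\},\mathcal{I}^\mu)$ representing $\xi$, and local holomorphic frames $e_j$ of $L$ on $W_j$ whose transition functions read $g_{jk}=\exp(2\pi\sqrt{-1}f_{jk})$. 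Choosing a $C^\infty$ partition of unity $\{\rho_l\}$ subordinate to $\{W_l\}$ with $\sum_l\rho_l\equiv 1$ on an open neighborhood $W$ of $Y$, I set $G_j:=\sum_l\rho_l f_{jl}$ on $W_j$; the cocycle identity $f_{jl}+f_{lk}=f_{jk}$ yields $G_j-G_k=f_{jk}$ on $W_j\cap W_k$.

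The local $(0,1)$-forms $\delbar G_j=\sum_l(\delbar\rho_l)f_{jl}$ then satisfy $\delbar G_j-\delbar G_k=\delbar f_{jk}=0$, hence patch to a global $\delbar$-closed $(0,1)$-form on $W$; because every $f_{jl}\in\mathcal{I}^\mu$, this global form automatically lies in $\Gamma(W,\mathcal{I}^\mu\mathcal{A}_\Omega^{0,1})$. Define $\eta_\mu$ to be $-2\pi$ times it. Since the same partition $\{\rho_l\}$ realizes the \v{C}ech-to-Dolbeault splitting, the Dolbeault class $[\eta_\mu]\in H^{0,1}(W)$ is exactly the image of $-2\pi\xi$. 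For the metric, define a real $C^\infty$ function $\phi_j:=2\pi\sqrt{-1}(G_j-\overline{G_j})$ on $W_j$; from $G_j-G_k=f_{jk}$ one obtains $\phi_k-\phi_j=\log|g_{jk}|^2$ (with the sign adapted to the adopted cocycle convention for $L$), so $|e_j|^2_{h_\mu}:=e^{-\phi_j}$ defines a globally well-defined $C^\infty$ Hermitian metric $h_\mu$ on $L|_W$.

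The remaining ingredient is $\sqrt{-1}\,\Theta_{h_\mu}=d(\eta_\mu+\overline{\eta_\mu})$, which I would verify by a direct local expansion on each $W_j$. Using $\delbar f_{jl}=0=\partial\overline{f_{jl}}$, the left-hand side $\sqrt{-1}\,\ddbar\phi_j$ unfolds into three families of summands, of types $\ddbar\rho_l\cdot(f_{jl}-\overline{f_{jl}})$, $\delbar\rho_l\wedge\partial f_{jl}$, and $\partial\rho_l\wedge\delbar\overline{f_{jl}}$; an analogous expansion of $\partial\eta_\mu+\delbar\overline{\eta_\mu}=d(\eta_\mu+\overline{\eta_\mu})$ produces exactly the same three families, and the constants in $\phi_j$ and $\eta_\mu$ have been chosen precisely so that the two sides agree term-by-term. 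The main (and essentially only) obstacle is this sign- and factor-bookkeeping, which must be synchronized with the paper's convention for $\alpha_{\mathcal{I},\mu}$ and with the Chern-curvature convention used in the proof of Proposition~\ref{prop:kasiwara}; once the constants are pinned down, everything else is immediate from $\delbar f_{jl}=0$ and $\sum_l\rho_l\equiv 1$.
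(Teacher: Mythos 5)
Your proposal is correct and follows essentially the same strategy as the paper's proof: represent $\xi$ by a \v{C}ech cocycle realizing the transition functions of $L$ via the exponential, use a partition of unity to produce local primitives, and read off both $\eta_\mu$ and the metric weight from those primitives (and your sign bookkeeping does close up: with $G_j-G_k=f_{jk}$ one gets $\phi_j-\phi_k=\log|g_{jk}|^2$, $\sqrt{-1}\ddbar\phi_j=-2\pi\ddbar G_j+2\pi\ddbar\overline{G_j}=d(\eta_\mu+\overline{\eta_\mu})$, and $[\eta_\mu]=-2\pi\xi$). The only real difference is that you split the cocycle additively via $G_j=\sum_l\rho_l f_{jl}$, whereas the paper splits it multiplicatively through the global smooth section $\sigma=\sum_j\rho_j s_j$ and $\beta_j=\log(\sigma/s_j)=\log\bigl(1+\sum_k F_{jk}\rho_k\bigr)$; your variant makes the membership $\eta_\mu\in\Gamma(W,\mathcal{I}^\mu\mathcal{A}_\Omega^{0,1})$ manifest term by term, while the paper's obtains it from $\log(1+u)\in\mathcal{I}^\mu\mathcal{A}_\Omega^{0,0}$ for $u\in\mathcal{I}^\mu\mathcal{A}_\Omega^{0,0}$ at the cost of shrinking to where $\sigma$ is nowhere vanishing.
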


\begin{proof}
Take a sufficiently small open neighborhood $V^*$ of $Y$ in $V$, a sufficiently fine finite open covering $\{V_j\}_{j=1}^N$ of $V$, $\{(V_{jk}, \xi_{jk})\}\in \check{Z}^1(\{V_j\}, \mathcal{I}^\mu)$ such that 
$\xi=[\{(V_{jk}, \xi_{jk})\}]$, and a partition of unity $\{(V_j, \rho_j)\}_{j=1}^N\cup\{(\Omega\setminus \overline{V^*}, \rho_\infty)\}$ subordinate to the covering $\{V_j\}_{j=1}^N\cup\{\Omega\setminus \overline{V^*}\}$ of $\Omega$. 
As $\alpha_{\mathcal{I}, \mu}(\xi)=[L]_Y$, there exists a system $\{(V_j, s_j)\}$ of local trivializations of $L$ such that $s_k/s_j=\exp(2\pi\sqrt{-1}\xi_{jk})$ holds on each $V_j\cap V_k$ (by shrinking $V$ if necessary). 
Set 
$\sigma := \textstyle\sum_{j=1}^N \rho_j\cdot s_j$. 
Then one has
\[
\sigma = s_j\cdot \sum_{k=1}^N \rho_k\cdot \frac{s_k}{s_j}
=s_j\cdot \sum_{k=1}^N \exp(2\pi\sqrt{-1}\xi_{jk})\cdot \rho_k
=s_j\cdot \sum_{k=1}^N (1+F_{jk})\cdot \rho_k
=s_j\cdot \left(1+\sum_{k=1}^N F_{jk}\cdot \rho_k\right)
\]
on each $V_j$, where 
\[
F_{jk} := \exp(2\pi\sqrt{-1}\xi_{jk}) - 1 \in \Gamma(V_j\cap V_k,\ \mathcal{I}^\mu). 
\]
Therefore one can take a neighborhood $W$ of $Y$ in $V^*$ in which $\sigma$ is a nowhere vanishing $C^\infty$ section of $L|_W$. 
Let $h_\mu$ be the Hermitian metric on $L|_W$ defined by letting 
$|\sigma|_{h_\mu} = 1$ at any point of $W$. 
From 
\[
-\log |s_j|_{h_\mu}^2
 = \log \frac{1}{|s_j|_{h_\mu}^2} = \log \frac{|\sigma|_{h_\mu}^2}{|s_j|_{h_\mu}^2}
= \log \left|\frac{\sigma}{s_j}\right|^2
=\log \left|1 + \sum_k F_{jk}\cdot \rho_k\right|^2, 
\]
we infer that 
\[
\sqrt{-1}\Theta_{h_\mu}|_{W\cap V_j} = \sqrt{-1} \ddbar (\beta_j + \overline{\beta_j})
\]
holds, where 
\[
\beta_j := \log \frac{\sigma}{s_j} = \log \left(1 + \sum_k F_{jk}\cdot \rho_k\right) \in \Gamma(W\cap V_j,\ \mathcal{I}^\mu\mathcal{A}_\Omega^{0, 0}). 
\]
Here we are using the branch of the logarithm which satisfies $\log 1 = 0$ by shrinking $W$ if necessary. 
As
\[
\beta_j - \beta_k = \log \frac{\sigma}{s_j} - \log \frac{\sigma}{s_k} = \log \frac{s_k}{s_j} = 2\pi\sqrt{-1}\xi_{jk}, 
\]
we have that $\{(W\cap V_j, \delbar \beta_j)\}$ defines a $\delbar$-closed global $(0, 1)$-form $\gamma\in \Gamma(W, \mathcal{I}^\mu\mathcal{A}_\Omega^{0, 1})$ whose Dolbeault cohomology class is the counterpart of $2\pi\sqrt{-1}\xi$ via the \v{C}ech--Dolbeault correspondence. Therefore the assertion follows by letting $\eta_\mu = \sqrt{-1}\gamma$, since 
\[
\sqrt{-1} \ddbar (\beta_j + \overline{\beta_j})
=\sqrt{-1}d(\delbar\beta_j - \del \overline{\beta_j})
=d\left(\sqrt{-1}\,\delbar\beta_j + \overline{\sqrt{-1}\,\delbar\beta_j} \right)
\]
holds on each $W\cap V_j$. 
\end{proof}

\subsection{The $\ddbar$-problem on a neighborhood $Y$ and the obstruction class $v(\mathcal{I}, L)$}\label{subsection:main_prf_pre2}
From this subsection we will work on a compact K\"ahler manifold, since the following proposition plays a key role in our proof below. 
\begin{proposition}\label{prop:key}
Let $X$ be a compact K\"ahler manifold, $Y$ an analytic subset of $X$, $W$ an open neighborhood of $Y$ in $X$, and $\eta$ a $\delbar$-closed $(0, 1)$-form on $W$. For $\theta := d(\eta + \overline{\eta})$, there exist an open neighborhood $V_0$ of $Y$ in $W$ and a $C^\infty$ function $f\colon V_0\to \mathbb{R}$ such that $\theta=\sqrt{-1}\ddbar f$ holds on $V_0$ if the Dolbeault cohomology class $[\eta]\in H^{0, 1}(W)$ is mapped to $0\in H_c^{0, 2}(X\setminus Y)$ by the composition 
\[
H^{0, 1}(W)
\to \varinjlim_UH^{0, 1}(U)
\to \varinjlim_UH^{0, 1}(U\setminus Y)
\to H_c^{0, 2}(X\setminus Y)
\] 
of the natural maps: i.e., the composition of the natural map 
$H^{0, 1}(W)\to \textstyle\varinjlim_UH^{0, 1}(U)$, 
the restriction map 
$\textstyle\varinjlim_UH^{0, 1}(U)
\to \textstyle\varinjlim_UH^{0, 1}(U\setminus Y)$, 
and the map 
$\textstyle\varinjlim_UH^{0, 1}(U\setminus Y)
\to H_c^{0, 2}(X\setminus Y)$ which appears in the sequence $(\ref{exseq:long_ex_seq_H_cHlimH})$ with $M=X\setminus Y$. 
\end{proposition}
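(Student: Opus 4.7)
The plan is to globalise $\eta$ to a $\delbar$-closed $(0,1)$-form on the whole of $X$ that still agrees with $\eta$ on some neighborhood of $Y$, and then invoke the $\ddbar$-lemma on the compact K\"ahler manifold $X$ in the spirit of the proof of Proposition \ref{prop:kasiwara}.

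Concretely, I first fix an open neighborhood $V_1$ of $Y$ with $\overline{V_1}\subset W$ and a cut-off function $\chi\in C^\infty(X,[0,1])$ satisfying $\chi\equiv 1$ on $V_1$ and ${\rm supp}\,\chi\subset W$, and set $\tilde\eta:=\chi\eta$, extended by zero to the whole of $X$. Since $\delbar\eta=0$ on $W$, one has $\delbar\tilde\eta=\delbar\chi\wedge\eta$, whose support lies in the compact set ${\rm supp}\,d\chi\subset W\setminus V_1\subset M$; hence $\delbar\tilde\eta\in\Gamma_c(M,\mathcal{A}_M^{0,2})$. Because $\tilde\eta|_M$ coincides with $\eta$ on $V_1\setminus Y=M\setminus(X\setminus V_1)$, Remark \ref{rmk:explicit_repre_renketu} identifies the class $[\delbar\tilde\eta]\in H_c^{0,2}(M)$ with the image of $[\eta]\in H^{0,1}(W)$ under the three-fold composition in the hypothesis. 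The vanishing assumption therefore produces some $\alpha\in\Gamma_c(M,\mathcal{A}_M^{0,1})$ with $\delbar\alpha=\delbar\tilde\eta$.

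Set $\zeta:=\tilde\eta-\alpha$ on $M$; it is $\delbar$-closed. Since ${\rm supp}\,\alpha$ is a compact subset of $M$, it is bounded away from $Y$, so there is an open neighborhood $V_0$ of $Y$ in $X$ with $V_0\subset V_1$ on which $\alpha$ vanishes identically. Consequently $\zeta=\tilde\eta=\chi\eta=\eta$ on $V_0\setminus Y$, which allows me to glue $\eta$ (on $V_0$) and $\zeta$ (on $M$) into a single $C^\infty$ $(0,1)$-form $\hat\zeta$ on $V_0\cup M=X$ satisfying $\delbar\hat\zeta=0$ globally and $\hat\zeta|_{V_0}=\eta$.

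Finally, put $\theta_X:=d(\hat\zeta+\overline{\hat\zeta})$. Since $\delbar\hat\zeta=0$ one has $d\hat\zeta=\del\hat\zeta$ and $d\overline{\hat\zeta}=\delbar\overline{\hat\zeta}$, so $\theta_X$ is a $d$-exact real $(1,1)$-form on the compact K\"ahler manifold $X$. Exactly as in the proof of Proposition \ref{prop:kasiwara}, the $\ddbar$-lemma on $X$ yields a real $f\in C^\infty(X,\mathbb{R})$ with $\theta_X=\sqrt{-1}\ddbar f$; restricting to $V_0$, where $\hat\zeta=\eta$, gives $\theta=d(\eta+\overline{\eta})=\sqrt{-1}\ddbar f|_{V_0}$, which is the required conclusion. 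The chief subtlety is the explicit identification of the image of $[\eta]$ under the connecting map with $[\delbar\tilde\eta]$ via Remark \ref{rmk:explicit_repre_renketu}; once this is granted, the rest is a routine cohomological gluing followed by the $\ddbar$-lemma on $X$.
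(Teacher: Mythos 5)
Your proof is correct and follows essentially the same route as the paper: cut off $\eta$, use Remark \ref{rmk:explicit_repre_renketu} and the vanishing hypothesis to produce a compactly supported correction on $M$, glue to a global $\delbar$-closed $(0,1)$-form on $X$ agreeing with $\eta$ near $Y$, and then solve the $\ddbar$-equation on the compact K\"ahler manifold $X$. The only cosmetic difference is that the paper carries out the last step by an explicit Hodge decomposition $\widetilde{\eta}=h+\delbar g$ rather than citing the $\ddbar$-lemma as in Proposition \ref{prop:kasiwara}, which amounts to the same argument.
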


\begin{proof}
Take a $C^\infty$ function $\chi\colon X\to \mathbb{R}$ such that $0\leq \chi\leq 1$ at any point of $X$, $\chi\equiv 1$ holds on a (small) neighborhood $W_0$ of $Y$, and that the support of $\chi$ is included in $W$. 
From the assumption and Remark \ref{rmk:explicit_repre_renketu}, it follows that the class 
$[\delbar(\chi\cdot \eta)] \in H_c^{0, 2}(X\setminus Y)$ is trivial, and hence there exists a $C^\infty$ $(0, 1)$-form $\Phi$ on $X\setminus Y$ with compact support such that $\delbar\Phi=\delbar(\chi\cdot \eta)$. 
Set 
\[
\widetilde{\eta}:=\begin{cases}
\eta & \text{on}\ V_0 := W_0\setminus {\rm Supp}\,\Phi\\
\chi\cdot \eta-\Phi & \text{on}\ X\setminus Y
\end{cases}, 
\]
where ${\rm Supp}\,\Phi$ denotes the support of $\Phi$. 
Note that $\widetilde{\eta}$ is a $C^\infty$ $\delbar$-closed $(0, 1)$-form on $X$ by construction. 
Fixing a K\"ahler metric on $X$, let $\widetilde{\eta} = h + \delbar g$ be the decomposition in accordance with the decomposition 
\[
{\rm Ker}(\delbar\colon \Gamma(X, \mathcal{A}_X^{0, 1}) \to \Gamma(X, \mathcal{A}_X^{0, 2})) = \mathbb{H}_X^{0, 1} \oplus {\rm Image}(\delbar\colon \Gamma(X, \mathcal{A}_X^{0, 0}) \to \Gamma(X, \mathcal{A}_X^{0, 1}))
\]
of the space of $\delbar$-closed $(0, 1)$-forms on $X$ into the direct sum of the space $\mathbb{H}_X^{0, 1}$ of harmonic $(0, 1)$-forms on $X$ and the space of $\delbar$-exact $(0, 1)$-forms on $X$ (See \cite[Chapter 0 \S 6]{GH} for example), namely $h\in \mathbb{H}_X^{0, 1}$ and $g\in \Gamma(X, \mathcal{A}_X^{0, 0})$. 
By the Hodge identity between Laplacians (See \cite[Chapter 0 \S 7]{GH} for example), it follows that $h$ is $d$-closed: $dh=0$. Therefore we have 
\[
d\widetilde{\eta} = d(h + \delbar g) = dh + \ddbar g = \ddbar g. 
\]
Thus the assertion follows by letting $f := 2{\rm Re}(-\sqrt{-1}g)$ on $V_0$. 
\end{proof}

In the rest of this subsection, we let $X$ be a compact K\"ahler manifold, $Y$ a non-empty analytic subset of $X$, $\mathcal{I}\subset \mathcal{O}_\Omega$ a coherent ideal sheaf such that the support of $\mathcal{O}_\Omega/\mathcal{I}$ is $Y$, 
and $L$ be a holomorphic line bundle on a neighborhood $W$ of $Y$ in $X$ such that $(i_{0}^{\mathcal{I}})^*\mathcal{O}_W(L)$ is trivial. 
By applying Proposition \ref{prop:key} to $\eta_\mu$ as in 
Lemma \ref{lem:refmetric} and the argument as in the proof of Proposition \ref{prop:kasiwara}, we have the following: 
\begin{proposition}\label{prop:van_v_implies_Ltriv}
Let $X, Y, \mathcal{I}, W$, and $L$ be as above. 
Then 
$v(\mathcal{I}, L)=0$ holds if and only if $L|_{V_0}$ is holomorphically trivial for a neighborhood $V_0$ of $Y$ in $W$. 
\end{proposition}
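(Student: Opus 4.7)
The ``only if'' direction is immediate: if $L|_{V_0}$ is holomorphically trivial for some neighborhood $V_0$ of $Y$, then $[L]_Y = 1 \in \varinjlim_V H^1(V, \mathcal{O}_V^*)$, and one may take $\xi = 0 \in \alpha_{\mathcal{I},1}^{-1}([L]_Y)$ in the definition of $v(\mathcal{I}, L)$, yielding $v(\mathcal{I}, L) = 0$.

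For the converse, assume $v(\mathcal{I}, L) = 0$. The strategy is a localized version of the Hodge-theoretic argument used to prove Proposition \ref{prop:kasiwara}, performing the $\ddbar$-correction only near $Y$. Fix any $\xi \in \alpha_{\mathcal{I},1}^{-1}([L]_Y)$ and apply Lemma \ref{lem:refmetric} with $\mu = 1$ to obtain, after shrinking $W$, a $C^\infty$ Hermitian metric $h_1$ on $L|_W$ and a $\delbar$-closed form $\eta_1 \in \Gamma(W, \mathcal{I}\mathcal{A}_X^{0,1})$ such that $\sqrt{-1}\Theta_{h_1} = d(\eta_1 + \overline{\eta_1})$ and $[\eta_1] \in H^{0,1}(W)$ restricts to the image of $-2\pi\xi$ in $\varinjlim_U H^{0,1}(U)$. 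Combining this with Lemma \ref{lem:obst_v_welldef}, which asserts that $v(\mathcal{I}, L)$ factors through the natural map $\varinjlim_V H^1(V, \mathcal{I}) \to \varinjlim_V H^1(V, \mathcal{O}_V)$, one sees that the vanishing $v(\mathcal{I}, L) = 0$ is equivalent to the statement that $[\eta_1]$ is annihilated by the composite
\[
H^{0,1}(W) \to \varinjlim_U H^{0,1}(U) \to \varinjlim_U H^{0,1}(U \setminus Y) \to H_c^{0,2}(M).
\]
This is precisely the hypothesis of Proposition \ref{prop:key}, which then supplies an open neighborhood $V_0$ of $Y$ in $W$ and a real-valued $C^\infty$ function $f$ on $V_0$ with $\sqrt{-1}\Theta_{h_1} = \sqrt{-1}\ddbar f$ on $V_0$. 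Consequently $h := h_1 \cdot e^{f}$ is a flat Hermitian metric on $L|_{V_0}$, so $L|_{V_0}$ is unitary flat.

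To upgrade unitary flatness to holomorphic triviality, let $\lambda \in H^1(V_0, \underline{{\rm U}(1)}_{V_0})$ denote the class of the flat structure carried by $(L|_{V_0}, h)$. Since $Y$ is compact and $L|_Y$ is holomorphically trivial, the injectivity of the map $I$ in Lemma \ref{lem:inj_unitaryflattohol} (applied on $Z = Y$) forces $\lambda|_Y = 0 \in H^1(Y, \underline{{\rm U}(1)}_Y)$. Lemma \ref{lem:prelim_limHq} with $G = {\rm U}(1)$ then yields a smaller neighborhood $V_0' \subset V_0$ on which $\lambda$ already vanishes; the resulting nowhere-vanishing flat---hence holomorphic---section trivializes $L|_{V_0'}$.

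The main point requiring care is the compatibility alluded to in the second paragraph between the \v{C}ech-theoretic definition of $v(\mathcal{I}, L)$ from \S \ref{section:def_obstr_class_vL} and the Dolbeault representative $\eta_1$ coming from Lemma \ref{lem:refmetric}: one must verify that the two natural routes of pushing $\xi$ into $H_c^{0,2}(M)$ agree up to a nonzero scalar, so that Proposition \ref{prop:key} is genuinely applicable whenever $v(\mathcal{I}, L) = 0$. Once this bookkeeping is settled, the remaining steps---the $\ddbar$-trick producing a flat metric, and the descent from unitary flatness to holomorphic triviality---follow directly from the preparatory material assembled in \S \ref{section:prelim} and \S \ref{subsection:main_prf_pre1}.
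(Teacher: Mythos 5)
Your proposal is correct and follows essentially the same route as the paper: the ``only if'' direction via $0\in\alpha_{\mathcal{I},1}^{-1}([L]_Y)$, and the converse by feeding the form $\eta_1$ from Lemma \ref{lem:refmetric} (whose statement already supplies the \v{C}ech--Dolbeault compatibility you flag as the delicate point, up to the factor $-2\pi$) into Proposition \ref{prop:key}, producing a flat metric and then descending to holomorphic triviality via Lemma \ref{lem:inj_unitaryflattohol} on $Y$ and Lemma \ref{lem:prelim_limHq} with $G={\rm U}(1)$.
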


\begin{proof}
When $L|_{V_0}$ is holomorphically trivial for a neighborhood $V_0$ of $Y$, $0 \in \alpha_{\mathcal{I}, 1}^{-1}([L]_Y)$ holds because $[L]_Y=1$ by virtue of Lemma \ref{lem:iikae_Ltriv_fund} $(i)$. 
Thus we have $v(\mathcal{I}, L)=0$ in this case. 

Assume $v(\mathcal{I}, L)=0$. 
Fix an element $\xi \in \alpha_{\mathcal{I}, 1}^{-1}([L]_Y)$ (Recall Lemma \ref{lem:iikae_Ltriv_fund} $(ii)$). 
By shrinking $W$, 
take a $C^\infty$ Hermitian metric $h_1$ on $L$ and a $\delbar$-closed form $\eta_1 \in \Gamma(W, \mathcal{I}\mathcal{A}_X^{0, 1})$ as in 
Lemma \ref{lem:refmetric}. 
As the image of the class of $-2\pi\xi$ by the natural map $\textstyle\varinjlim_{U}H^1(U, \mathcal{I})\to \textstyle\varinjlim_{U}H^1(U, \mathcal{O}_U)=\textstyle\varinjlim_{U}H^{0, 1}(U)$ coincides with the restriction of the Dolbeault cohomology class $[\eta_1]\in H^{0, 1}(W)$, 
it follows from the definition of $v(\mathcal{I}, L)$ and the assumption that the restriction of $[\eta_1]\in H^{0, 1}(W)$ is mapped to $0$ by the natural map $\textstyle\varinjlim_UH^{0, 1}(U\setminus Y)\to H_c^{0, 2}(X\setminus Y)$. 
Therefore we infer from Proposition \ref{prop:key} that there exist a neighborhood $V_1$ of $Y$ in $W$ and a $C^\infty$ real-valued function $f$ on $V_1$ such that $\sqrt{-1}\Theta_{h_1}=\sqrt{-1}\ddbar f$ holds. 
Let $h_{V_1}$ be the Hermitian metric on $L|_{V_1}$ defined by 
$h_{V_1} := h_1 \cdot \exp f$. 
As 
\[
\sqrt{-1}\Theta_{h_{V_1}} = \theta - \sqrt{-1}\ddbar f \equiv 0, 
\]
$h_{V_1}$ is a flat metric and thus $L|_{V_1}$ admits a structure as a unitary flat line bundle. 
Take a neighborhood $V_0$ of $Y$ in $V_1$ such that $(\ref{isom:singularsheafcoh})$ holds for $G={\rm U}(1)$. 
Then it follows that $L|_{V_0}$ (as a unitary flat line bundle) is trivial because $L|_Y$ is trivial, from which the assertion follows. 
\end{proof}

\begin{remark}\label{rmk:flatholinj}
In the last part of the proof of Proposition \ref{prop:van_v_implies_Ltriv}, we applied 
Lemma \ref{lem:inj_unitaryflattohol} 
in order to ensure the triviality of $L|_Y$ as a unitary flat line bundle. 
Note that the triviality of $L|_Y$ as a holomorphic line bundle is clear by considering the image of the global trivialization of $(i_0^{\mathcal{I}})^*\mathcal{O}_V(L)$ by the natural map $\Gamma(Y, \mathcal{O}_V(L)\otimes \mathcal{O}_V/\mathcal{I})\to \Gamma(Y, \mathcal{O}_V(L)\otimes \mathcal{O}_V/\sqrt{\mathcal{I}})$. 
\end{remark}

We also have the following as an application of Lemma \ref{lem:refmetric}: 
\begin{lemma}\label{lem:van_v_jetnu}
Let $X, Y, \mathcal{I}$, and $L$ be as above, and $\mu$ be a positive integer. 
Assume that $\mathcal{I}=\mathcal{O}_X(-D)$ holds for an effective divisor $D$ on $X$, and that $(i_{\mu-1}^{\mathcal{I}})^*\mathcal{O}_V(L)$ is trivial. 
Then $v(\mathcal{I}, L)$ is mapped to $0$ by the natural map $H_c^{0, 2}(M)\to H^2(X, \mathcal{I}^\mu)$. 
\end{lemma}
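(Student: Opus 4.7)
The plan is to compute $v(\mathcal{I}, L)$ by means of a cut-off construction and then to observe that the resulting compactly supported representative is already $\delbar$-exact at the level of the complex $\Gamma(X, \mathcal{I}^\mu\mathcal{A}_X^{0,\bullet})$ computing $H^\bullet(X, \mathcal{I}^\mu)$.

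Since $(i_{\mu-1}^{\mathcal{I}})^*\mathcal{O}_W(L)$ is trivial, Lemma \ref{lem:iikae_Ltriv_fund} $(ii)$ (applied with $\mu$ in place of $1$) furnishes an element $\xi_\mu \in \alpha_{\mathcal{I},\mu}^{-1}([L]_Y)\subset \textstyle\varinjlim_V H^1(V, \mathcal{I}^\mu)$. By the commutativity of the diagram preceding Lemma \ref{lem:iikae_Ltriv_fund}, its image $\xi_1$ under the natural map $\textstyle\varinjlim_V H^1(V, \mathcal{I}^\mu) \to \textstyle\varinjlim_V H^1(V, \mathcal{I})$ lies in $\alpha_{\mathcal{I},1}^{-1}([L]_Y)$, so by Lemma \ref{lem:obst_v_welldef} I would compute $v(\mathcal{I}, L)$ starting from this $\xi_1$. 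Next I would apply Lemma \ref{lem:refmetric} to $\xi_\mu$: after shrinking $W$, this produces a $\delbar$-closed form $\eta_\mu \in \Gamma(W, \mathcal{I}^\mu\mathcal{A}_X^{0,1})$ whose Dolbeault class is, up to the nonzero scalar $-2\pi$, the image of $\xi_\mu$ in $\textstyle\varinjlim_V H^{0,1}(V)$, which by naturality agrees with the image of $\xi_1$ there. Combining this with Remark \ref{rmk:explicit_repre_renketu} and a cut-off $\chi\colon X\to[0,1]$ that equals $1$ on a neighborhood $W_0$ of $Y$ with $\overline{W_0}\subset W$ and ${\rm Supp}(\chi)\subset W$, I would conclude that $v(\mathcal{I}, L)$ is represented, up to a nonzero scalar, by the compactly supported $(0,2)$-form $\delbar(\chi\cdot\eta_\mu)$ in $H_c^{0,2}(M)$.

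The natural map $H_c^{0,2}(M)\to H^2(X, \mathcal{I}^\mu)$ is induced by the chain map $\Gamma_c(M, \mathcal{A}_M^{0,\bullet})\to \Gamma(X, \mathcal{I}^\mu\mathcal{A}_X^{0,\bullet})$ given by extension by zero, using the soft resolution $(\ref{resolution:soft_dolbeault_general})$ of the flat sheaf $\mathcal{I}^\mu=\mathcal{O}_X(-\mu D)$. The hard part will be verifying that the primitive $\chi\cdot\eta_\mu$, extended by zero outside $W$, is itself a global section of $\mathcal{I}^\mu\mathcal{A}_X^{0,1}$: near $Y$ it coincides with $\eta_\mu\in \mathcal{I}^\mu\mathcal{A}_W^{0,1}$ since $\chi\equiv 1$ on $W_0\supset Y$, while on $M=X\setminus Y$ one has $\mathcal{I}^\mu\mathcal{A}_M^{0,q}=\mathcal{A}_M^{0,q}$ because $\mathcal{O}_X(-\mu D)$ is trivial away from $Y$. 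Once this is in place, the image of $v(\mathcal{I}, L)$ in $H^2(X, \mathcal{I}^\mu)$ is represented by $\delbar(\chi\cdot\eta_\mu)$ with primitive $\chi\cdot\eta_\mu\in \Gamma(X, \mathcal{I}^\mu\mathcal{A}_X^{0,1})$, and therefore vanishes. Note that the argument really uses the hypothesis that $\xi_\mu$ can be lifted already to depth $\mu$, not merely to depth $1$, so that Lemma \ref{lem:refmetric} produces $\eta_\mu$ in $\mathcal{I}^\mu\mathcal{A}_W^{0,1}$ rather than only in $\mathcal{I}\mathcal{A}_W^{0,1}$.
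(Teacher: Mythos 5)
Your proposal is correct and follows essentially the same route as the paper: lift $[L]_Y$ to $\xi\in\alpha_{\mathcal{I},\mu}^{-1}([L]_Y)$, use Lemma \ref{lem:refmetric} to produce the $\delbar$-closed $\eta_\mu\in\Gamma(W,\mathcal{I}^\mu\mathcal{A}_X^{0,1})$, represent $v(\mathcal{I},L)$ (up to the harmless factor $-2\pi$) by $\delbar(\chi\cdot\eta_\mu)$ via Remark \ref{rmk:explicit_repre_renketu}, and observe that this class dies in $H^2(X,\mathcal{I}^\mu)\cong H^2(\Gamma(X,\mathcal{I}^\mu\mathcal{A}_X^{0,\bullet}))$ because the primitive $\chi\cdot\eta_\mu$ is already a global section of $\mathcal{I}^\mu\mathcal{A}_X^{0,1}$. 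The extra details you supply (passing from $\xi_\mu$ to $\xi_1$ through the commutative diagram and invoking the independence of $v(\mathcal{I},L)$ from the choice of lift, plus the verification that the zero-extension of $\chi\cdot\eta_\mu$ lands in $\Gamma(X,\mathcal{I}^\mu\mathcal{A}_X^{0,1})$) are points the paper leaves implicit, and they are all correct.
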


\begin{proof}
Fix an element $\xi \in \alpha_{\mathcal{I}, \mu}^{-1}([L]_Y)$ (again recall Lemma \ref{lem:iikae_Ltriv_fund} $(ii)$). 
By shrinking $W$, take a $C^\infty$ Hermitian metric $h_\mu$ on $L$ and a $\delbar$-closed form $\eta_\mu \in \Gamma(W, \mathcal{I}^{\mu}\mathcal{A}_X^{0, 1})$ as in 
Lemma \ref{lem:refmetric}. 
As $\mathcal{I}^\mu=\mathcal{O}_X(-\mu D)$ is locally free, one can identify 
$H^2(X, \mathcal{I}^\mu)$ with $H^2(\Gamma(X, \mathcal{I}^\mu\mathcal{A}_X^{0, \bullet}))$ via $(\ref{resolution:soft_dolbeault_general})$. 
Via this identification, it is clear by definition that the image of $v(\mathcal{I}, L)$ by the map $H_c^{0, 2}(M)\to H^2(X, \mathcal{I}^\mu)$ is the class of $\delbar(\chi\cdot \eta_\mu)$, where $\chi$ is as in the proof of Proposition \ref{prop:key}. 
As $\chi\cdot \eta_\mu\in \Gamma(X, \mathcal{I}^\mu\mathcal{A}_X^{0, 1})$, we have the assertion. 
\end{proof}

\subsection{Proof of Theorem \ref{thm:main} and its corollaries}\label{subsection:main_prf_main}

\begin{proof}[Proof of Theorem \ref{thm:main}]
By Lemma \ref{lem:obst_v_welldef}, we infer $v(\mathcal{I}_Y, L)=0$ for any holomorphic line bundle $L$ on a neighborhood of $Y$ whose restriction $L|_Y$ is holomorphically trivial when $H_c^{0, 2}(M)\to H^{0, 2}(M)$ is injective. 
Therefore the assertion $(i)$ follows from Proposition \ref{prop:van_v_implies_Ltriv}. 

Assume that there exists an effective divisor $D$ on $X$ such that ${\rm Supp}\,D=Y$ and that the natural map $H_c^{2}(M, \mathcal{O}_M)\to \textstyle\varprojlim_\mu H^2(X, \mathcal{I}^{\mu})$ is injective, where $\mathcal{I}:=\mathcal{O}_X(-D)$. 
Take a holomorphic line bundle $L$ on a neighborhood $V$ of $Y$ such that $i_\infty^*\mathcal{O}_V(L)$ is trivial. 
Then it follows from Lemma \ref{lem:van_v_jetnu} that 
$v(\mathcal{I}, L)$ is mapped to $0$ by the natural map $H_c^{0, 2}(X\setminus Y)\to \textstyle\varprojlim_\mu H^2(X, \mathcal{I}^\mu)$. 
Thus we have $v(\mathcal{I}, L)=0$. Therefore the assertion $(ii)$ follows again from Proposition \ref{prop:van_v_implies_Ltriv}. 
\end{proof}

\begin{proof}[Proof of Corollary \ref{cor:main1}]
By the argument in \cite[p. 294]{D1}, one can deduce from a theorem of Ohsawa for very strongly $(n-2)$-convex K\"ahler manifolds \cite{O1982} that $H_c^{0, 2}(M) \to H^{0, 2}(M)$ is injective. 
Thus the corollary follows from Theorem \ref{thm:main} $(i)$. 
\end{proof}

\begin{proof}[Proof of Corollary \ref{cor:main2}]
In this proof, for each $q\geq 0$, we will topologize $H^{2, q}(M)$ by considering the resolution of $\mathcal{O}_M(K_M)$ by the sheaves of currents and giving the Fr\'echet topology of the uniform convergence on bounded sets (the strong dual topology) on the spaces of currents, see \cite[\S 1.4.3]{Obook} for example. 
Note that this topology on $H^{2, 0}(M)=\Gamma(M, \mathcal{O}_M(K_M))$ coincides with the topology of uniform convergence on compact subsets, by virtue of Banach's open mapping theorem. 

First let us show the assertion when the condition $(i)$ holds. 
By applying Serre's duality theorem \cite{Se} \cite{LL1} (refer also to \cite[Theorem 1.33]{Obook}), we infer from the Hausdorffness of $H_c^{0, 2}(M)$ that the natural map $H_c^{0, 2}(M) \to (H^{2, 0}(M))^\vee$ is injective, where $(H^{2, 0}(M))^\vee$ denotes the topological dual space of $H^{2, 0}(M)$. 
By considering the dual of the natural map $\textstyle\varinjlim_\mu \Gamma(X, \mathcal{O}_X(K_X)\otimes \mathcal{O}_X(\mu D))\to \Gamma(M, \mathcal{O}_M(K_M))$, it follows from the assumption that the natural map $(H^{2, 0}(M))^\vee\to \textstyle\varprojlim_\mu (H^0(X, \mathcal{O}_X(K_X)\otimes\mathcal{O}_X(\mu D))^\vee=\textstyle\varprojlim_\mu H^2(X, \mathcal{O}_X(-\mu D))$ is also injective. 
Therefore the assertion in this case follows from Theorem \ref{thm:main} $(ii)$.

Next let us show the assertion under the condition $(ii)$. 
Take a $C^\infty$ Hermitian metric $h$ on $[D]$ whose Chern curvature form is positive definite at any point of $Y$. Then the function $\vp:=-\log |\sigma_D|_h^2$ is an exhaustion function on $M$ which is strictly plurisubharmonic outside a compact subset $K$ of $M$, where $\sigma_D$ is the canonical section of $[D]$. 
Therefore we infer that $D$ is pseudoconcave of order $2$ in the sense of \cite{O2021}, since $|\sigma_D|_h^{-2}=\exp\,\vp$ is also strictly plurisubharmonic on $M\setminus K$. 
By virtue of \cite[Theorem 0.3]{O2021}, the natural map $\textstyle\varinjlim_\mu \Gamma(X, \mathcal{O}_X(K_X)\otimes \mathcal{O}_X(\mu D))\to \Gamma(M, \mathcal{O}_M(K_M))$ has dense image. 
As $H^{2, 1}(M)$ is Hausdorff by Andreotti--Grauert theorem \cite{AG} (also refer to \cite[Lemme 2]{Se}, \cite[\S 2]{O1982}, and \cite[\S 4]{D1}), it follows from \cite[Theorem 1.1]{LL1} that $H_c^{0, 2}(M)$ is also Hausdorff, see also \cite[Theorem 1.33]{Obook}. 
Thus the assertion follows from Corollary \ref{cor:main2} $(i)$. 
\end{proof}

\begin{remark}\label{rmk:elemprf_cormain2ii}
Note that we may assume that the function $\vp$ in the proof of Corollary \ref{cor:main2} $(ii)$ is plurisubharmonic by applying the standard modification argument: for example by taking the regularized maximum function of $\vp$ and a sufficiently large constant, see \cite[\S 2.1]{K2019}. 
In particular, by considering the Remmert reduction, $M$ is a proper modification of a Stein space (See \cite[Chapter V \S 2]{GPR} for example). 
Let $p\colon X \to X'$ be the contraction of the maximal analytic subset of $M$. 
From \cite[p, 347 Satz 4]{G}, \cite[p. 343 Satz 2]{G}, and the assumption $(ii)$ of Corollary \ref{cor:main2}, it follows that $X'$ is a projective algebraic variety and that $p(M)$ can be naturally regarded as an affine domain of  $X'$ (Refer to \cite{KUed} for the details). 
Therefore, for example when $\Gamma(M, \mathcal{O}_M(K_M^{-1})\otimes \mathcal{O}_M(\mu D))\not=0$ for some integer $\mu$, one can also show the density of the image of $\textstyle\varinjlim_\mu \Gamma(X, \mathcal{O}_X(K_X)\otimes \mathcal{O}_X(\mu D))\to \Gamma(M, \mathcal{O}_M(K_M))$ only by applying fundamental arguments under the assumption $(ii)$ of Corollary \ref{cor:main2}. 
\end{remark}

\begin{remark}\label{rmk:hypersurfaceYstYY>0}
Let $X$ be a compact complex manifold and $Y$ be a non-singular curve in $X$ whose normal bundle $N_{Y/X}$ is positive. 
Then it follows from \cite[Proposition (2.2)]{Su} that the complement $M:=X\setminus Y$ is strongly $1$-convex. 
Therefore one can apply the same argument as in Remark \ref{rmk:elemprf_cormain2ii} to construct a proper map $p\colon X \to X'$ which maps a neighborhood of $Y$ to a neighborhood of $Y':=p(Y)$ biholomorphically such that $X'\setminus Y'$ is a Stein space. 
Again by applying \cite[p, 347 Satz 4]{G}, we have that $[Y']$ is a positive line bundle on $X'$. Therefore there exists a neighborhood $V$ of $Y$ in $X$ such that $[Y]|_V$ is positive (See also \cite[p. 440 Corollary]{Gri}). 
In summary, the condition $(ii)$ in Corollary \ref{cor:main2} is satisfied when $D=Y$ and $N_{Y/X}>0$. Refer to \cite{KUed} for the case where $Y$ is singular. 
\end{remark}


\section{Examples and Discussions}\label{section:exdiss}

In this section we compare our results with some known results. 
We also give some examples and prove Theorem \ref{thm:exp2bl9pts}. 

\subsection{Higher dimensional cases}

Let $\Omega$ be a complex manifold, $\mathcal{I}\subset \mathcal{O}_\Omega$ a non-trivial coherent ideal sheaf, and $Y$ be the support of $\mathcal{O}_\Omega/\mathcal{I}$. 
In what follows we always assume that there exists a positive integer $N$ such that $\mathcal{I}_Y^N\subset \mathcal{I}\subset \mathcal{I}_Y$ holds in order to clarify the meaning of the condition ``$\mu>>1$'', which stands for ``$\mu$ is sufficiently large'' and will be assumed in some assertions below. 
Note that the existence of such $N$ simply follows from (analytic) Hilbert's Nullstellensatz when $Y$ is compact. 

From Lemma \ref{lem:suff_cond_fplb} $(i)$ it follows that the FPLB holds on neighborhoods of $Y$ in $\Omega$ if 
\begin{equation}\label{eq:conclusion_Griff}
\mu >> 1 \implies \varinjlim_V H^1(V, \mathcal{I}^\mu) = 0
\end{equation}
holds. 
Moreover, in this case, {\it the stable uniqueness} (in the sense of \cite{Gri}) of a holomorphic line bundles holds on neighborhoods of $Y$ in $\Omega$: i.e., one can infer the triviality of a holomorphic line bundle on a neighborhood of $Y$ by showing the vanishing of finite numbers of obstructions. 
This observation was already made by Griffiths in \cite{Gri}. 
Also Griffiths gave a sufficient condition for the condition $(\ref{eq:conclusion_Griff})$ \cite[Theorem III]{Gri} (See also \cite[VII \S 4]{GPR}). 
By assuming that the codimension of $Y$ is $1$, for simplicity, from this we have the following: 
\begin{theorem}[{A consequence of \cite[Theorem III]{Gri}, see also \cite[Proposition 2.6]{Gri}}]\label{thm:griffmain}
Let $\Omega$, $\mathcal{I}$, and $Y$ be as above. 
Assume that $Y$ is a compact non-singular hypersurface of $\Omega$ and there exist an integer $s>1$ and a $C^\infty$ Hermitian metric on the normal bundle $N_{Y/\Omega}$ of $Y$ whose Chern curvature form has $s$ positive and ${\rm dim}\,Y-s$ negative eigenvalues at any point of $Y$. 
Then 
the FPLB holds on neighborhoods of $Y$ in $\Omega$. 
\end{theorem}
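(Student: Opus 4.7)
The plan is to apply Lemma \ref{lem:suff_cond_fplb}$(i)$ with the defining ideal $\mathcal{I}=\mathcal{I}_Y$, using the fact that the FPLB is a property of the pair $(Y,\Omega)$ alone, so that the auxiliary ideal $\mathcal{I}$ appearing in the theorem is inessential for the conclusion. For any holomorphic line bundle $L$ on a neighborhood $V$ of $Y$ with $i_\infty^*\mathcal{O}_V(L)$ trivial, Lemma \ref{lem:rel_probs} supplies the triviality of $(i_{\mu_0-1}^{\mathcal{I}_Y})^*\mathcal{O}_V(L)$ for every $\mu_0\ge 1$, so the task will reduce to establishing the cohomological vanishing $\varinjlim_V H^1(V,\mathcal{I}_Y^{\mu_0})=0$ for some $\mu_0$, which is precisely condition $(\ref{eq:conclusion_Griff})$.

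To obtain this vanishing I would organize the argument along the lines of Griffiths' Theorem III. Since $Y$ is a non-singular hypersurface, the graded pieces of the $\mathcal{I}_Y$-adic filtration are locally free, $\mathcal{I}_Y^\mu/\mathcal{I}_Y^{\mu+1}\cong N_{Y/\Omega}^{-\mu}$, and Serre duality on the compact $d$-dimensional manifold $Y$ (where $d=\dim Y$) identifies
\[
H^1(Y,N_{Y/\Omega}^{-\mu})\cong H^{d-1}(Y,K_Y\otimes N_{Y/\Omega}^\mu)^\vee.
\]
The assumed Hermitian metric on $N_{Y/\Omega}$ with exactly $s$ positive and $d-s$ negative curvature eigenvalues at every point, combined with the Kodaira--Nakano--Griffiths vanishing theorem for line bundles whose curvature has at least $s$ positive eigenvalues, then yields
\[
H^p(Y,K_Y\otimes N_{Y/\Omega}^\mu)=0 \quad(p>d-s,\ \mu\gg 1).
\]
Because $s>1$, this vanishing covers $p=d-1$, hence $H^1(Y,N_{Y/\Omega}^{-\mu})=0$ for $\mu\gg 1$; an analogous duality argument handles the neighbouring graded cohomologies.

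The main obstacle I anticipate is the final passage from the vanishing of the graded cohomologies on $Y$ to the vanishing of the germ-level group $\varinjlim_V H^1(V,\mathcal{I}_Y^\mu)$: the inductive long exact sequences
\[
H^0(Y,N_{Y/\Omega}^{-\mu})\to \varinjlim_V H^1(V,\mathcal{I}_Y^{\mu+1})\to \varinjlim_V H^1(V,\mathcal{I}_Y^\mu)\to H^1(Y,N_{Y/\Omega}^{-\mu})\to\cdots
\]
produce only injectivity or surjectivity between consecutive terms, so an additional geometric input is required to conclude the absolute vanishing. Griffiths' key idea is to exploit the $(s,d-s)$-mixed signature on $N_{Y/\Omega}$ to construct, for $\varepsilon>0$ small, a tubular neighborhood $V_\varepsilon$ of $Y$ whose boundary is simultaneously strictly pseudoconvex in $s$ directions and strictly pseudoconcave in $d-s$ directions, and then to apply the Andreotti--Grauert finiteness and vanishing theorem on such a $q$-convex-concave open set to secure $H^1(V_\varepsilon,\mathcal{I}_Y^\mu)=0$ directly for $\mu\gg 1$. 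Once that vanishing is in hand, Lemma \ref{lem:suff_cond_fplb}$(i)$ produces the desired neighborhood $V_0$ on which $L$ is holomorphically trivial, completing the proof of the FPLB.
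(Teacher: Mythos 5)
Your proposal is correct and follows essentially the same route as the paper: reduce to the vanishing $\varinjlim_V H^1(V,\mathcal{I}_Y^{\mu})=0$ for $\mu\gg 1$, obtain that vanishing from Griffiths' Theorem III, and conclude via Lemma \ref{lem:suff_cond_fplb} $(i)$. The paper simply cites \cite[Theorem III]{Gri} for the cohomological vanishing, whereas you additionally sketch its internals (graded vanishing via Serre duality and Andreotti--Grauert-type positivity, then the mixed convex-concave tubular neighborhood); that extra material is consistent in outline but is not needed for the argument as the paper presents it.
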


\begin{proof}
The assertion follows from \cite[Theorem III]{Gri} 
and Lemma \ref{lem:suff_cond_fplb} $(i)$ 
by applying the argument above. 
\end{proof}

From Theorem \ref{thm:griffmain}, we have that the FPLB holds on neighborhoods of $Y$ in $\Omega$ especially when ${\rm dim}\,\Omega\geq 3$ and $Y$ is a compact non-singular hypersurface of $\Omega$ whose normal bundle is positive. 
Corollary \ref{cor:main2} $(ii)$ can be regarded as a generalization of this fact to the case where $\Omega$ can be realized as an open subset of a compact K\"ahler surface $X$ (Recall Remark \ref{rmk:hypersurfaceYstYY>0}). 
Note that the FPLB holds on neighborhoods of $Y$ in $\Omega$ when $Y$ is a compact non-singular hypersurface of $\Omega$ whose normal bundle is negative, since in this case $Y$ is an exceptional subset (in the sense of Grauert \cite{G}) and one can apply the following theorem due to Peternell: 
\begin{theorem}[{Part of a theorem of Peternell \cite{P}, see also \cite[Chapter VII. Theorem 4.5]{GPR}}]
Let $\Omega$, $\mathcal{I}$, and $Y$ be as above. 
Assume that $Y$ is an exceptional subspace. Then the FPLB holds on neighborhoods of $Y$ in $\Omega$. 
\end{theorem}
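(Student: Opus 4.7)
The plan is to use Grauert's contraction theorem for exceptional subspaces to reduce the FPLB to a comparison of analytic and formal cohomologies, and to deduce the triviality of $L$ by chasing exponential long exact sequences on a Stein neighborhood of a finite set of points.

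Since $Y \subset \Omega$ is exceptional in the sense of Grauert, there exist an open neighborhood $W$ of $Y$ in $\Omega$, a Stein complex space $W'$ (after shrinking), a finite subset $S \subset W'$, and a proper surjective holomorphic map $\pi\colon W \to W'$ such that $\pi^{-1}(S) = Y$ and $\pi$ restricts to a biholomorphism $W\setminus Y \cong W'\setminus S$. Let $L$ be a holomorphic line bundle on a neighborhood $V \subset W$ of $Y$ with $i_\infty^*\mathcal{O}_V(L) \cong \mathcal{O}_{Y_\infty}$; the goal is to show $L$ is holomorphically trivial on a smaller open neighborhood $V_0$ of $Y$.

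First I would identify $\textstyle\varinjlim_V H^q(V, \mathcal{O}_V)$ with $\textstyle\varprojlim_\mu H^q(Y_\mu, \mathcal{O}_{Y_\mu})$ for each $q \geq 1$. By Grauert's direct image theorem each $R^q \pi_* \mathcal{O}_W$ is $\mathcal{O}_{W'}$-coherent, and for $q \geq 1$ it is supported on the finite set $S$ because $\pi$ is an isomorphism outside $S$. Therefore the stalk $(R^q\pi_*\mathcal{O}_W)_S$ is a coherent module supported at a discrete set, hence a finite-dimensional $\mathbb{C}$-vector space, and it coincides with its own $\mathfrak{m}_S$-adic completion. Cartan's Theorem B on the Stein base $W'$ yields
\[
\varinjlim_V H^q(V, \mathcal{O}_V) \cong H^0(W', R^q\pi_*\mathcal{O}_W) = (R^q\pi_*\mathcal{O}_W)_S,
\]
while Grauert's theorem on formal functions identifies the completion of this stalk with $\textstyle\varprojlim_\mu H^q(Y_\mu, \mathcal{O}_{Y_\mu})$. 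Combining gives the desired isomorphism in every degree $q \geq 1$.

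Next I would compare the exponential long exact sequences on $V$ and on the formal completion $Y_\infty$ (whose underlying space is $Y$, with structure sheaf $\mathcal{O}_{Y_\infty}$ and integer sheaf $\underline{\mathbb{Z}}_Y$), passing to the direct limit over $V$ on one side and the inverse limit over $\mu$ on the other. The $\mathbb{Z}$-coefficient vertical maps are isomorphisms for $V$ sufficiently small by $(\ref{isom:singularsheafcoh})$, and the $\mathcal{O}$-coefficient vertical maps are isomorphisms by the previous paragraph. A four-lemma chase then forces the restriction
\[
\varinjlim_V H^1(V, \mathcal{O}_V^*) \longrightarrow \varprojlim_\mu H^1(Y_\mu, \mathcal{O}_{Y_\mu}^*)
\]
to have trivial kernel. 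In view of Lemma \ref{lem:rel_probs} and Lemma \ref{lem:iikae_Ltriv_fund} $(i)$, the kernel of this restriction is precisely the set of germs of line bundles $[L]_Y$ with $i_\infty^*\mathcal{O}_V(L)$ formally trivial but not holomorphically trivial on any neighborhood of $Y$, so its vanishing is the FPLB.

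The main obstacle is the rigorous treatment of the formal exponential sequence on $Y_\infty$ and the identification $\textstyle\varprojlim_\mu \mathrm{Pic}(Y_\mu) \cong H^1(Y_\infty, \mathcal{O}_{Y_\infty}^*)$, which requires the Mittag-Leffler property for the inverse system $\{H^0(Y_\mu, \mathcal{O}_{Y_\mu}^*)\}$; this follows because the transition maps fit into the exponential sequence whose $\mathcal{O}$-parts form an inverse system of coherent sheaves on which Mittag-Leffler holds by surjectivity of the truncation maps $\mathcal{O}_{Y_{\mu+1}} \to \mathcal{O}_{Y_\mu}$. One also needs to verify functoriality of the exponential sequences under the restriction $V \to Y_\infty$, but this is a routine consequence of the fact that the exponential map is defined sheaf-theoretically.
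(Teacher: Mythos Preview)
The paper does not supply its own proof of this statement; it is quoted as a known result of Peternell with references to \cite{P} and \cite[Chapter VII, Theorem 4.5]{GPR}, so there is nothing in the paper to compare your argument against.

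Your outline---Grauert contraction to a Stein base, the formal functions theorem to identify $\varinjlim_V H^q(V,\mathcal{O}_V)$ with $\varprojlim_\mu H^q(Y_\mu,\mathcal{O}_{Y_\mu})$, and a comparison of exponential sequences---is the natural strategy and is essentially correct. Two points deserve tightening. First, the ``four-lemma chase'' needs exactness of the $\varprojlim$ row, which is not automatic since $\varprojlim$ is only left exact; a cleaner route is to argue directly. Given $L$ in the kernel, $c_1(L)=0$ in $\varinjlim_V H^2(V,\mathbb{Z})\cong H^2(Y,\mathbb{Z})$, so $L=\exp(\xi)$ for some $\xi\in\varinjlim_V H^1(V,\mathcal{O}_V)$. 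Each restriction $\xi|_{Y_\mu}$ lies in the image of $H^1(Y,\mathbb{Z})$, and since $H^1(Y,\mathbb{Z})$ is finitely generated the decreasing chain of kernels $K_\mu=\ker\bigl(H^1(Y,\mathbb{Z})\to H^1(Y_\mu,\mathcal{O}_{Y_\mu})\bigr)$ stabilizes; this is the Mittag--Leffler condition actually needed, and it lets you lift $(\xi|_{Y_\mu})_\mu$ to a single class $n\in H^1(Y,\mathbb{Z})$. Your isomorphism then forces $\xi=n$ and hence $L=1$. Second, the justification you give for Mittag--Leffler on $\{H^0(Y_\mu,\mathcal{O}_{Y_\mu}^*)\}$ is not sound as written: surjectivity of the sheaf truncations $\mathcal{O}_{Y_{\mu+1}}\to\mathcal{O}_{Y_\mu}$ does not imply surjectivity on global sections, and in any case this ML condition is relevant only to identifying $\varprojlim_\mu\mathrm{Pic}(Y_\mu)$ with $\mathrm{Pic}(Y_\infty)$, not to the injectivity statement you need for the FPLB.
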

Note that, at least for the case where $Y$ is a compact non-singular hypersurface of $\Omega$ whose normal bundle is negative, for simplicity, 
the stable uniqueness of a holomorphic line bundles holds on neighborhoods of $Y$ in $\Omega$, since
\[
\mu >> 1 \implies H^1(Y, \mathcal{I}_Y^\mu/\mathcal{I}_Y^{\mu+1}) = H^1(Y, \mathcal{O}_Y(N_{Y/\Omega}^{-\mu})) = 0
\]
follows from Kodaira vanishing theorem. 
For some concrete examples, one can apply \cite[Theorem 4.5]{O1984} to estimate the lower bound of positive integers $\mu_0$ such that the triviality of $(i_{\mu_0}^{\mathcal{I}})^*\mathcal{O}_V(L)$ implies the triviality of $L$ on a neighborhood of $Y$ for any holomorphic line bundle $L$ on a neighborhood $V$ of $Y$, see \cite[Remark 3.2]{K2015} and \cite[Theorem 1.5, 4.5]{K2021} for example. 

For the case where the normal bundle is flat, the following is shown by Ohsawa on the complement of $Y$ in a compact K\"ahler manifold $X$: 
\begin{theorem}[{\cite[p. 115 Theorem]{O2007}}]\label{thm:O2007}
Let $X$ be a compact K\"ahler manifold of dimension $n\geq 3$ and $Y$ be a hypersurface of $X$. 
Then there does not exist an effective divisor $D$ of $X$ such that $[D]|_Y$ is topologically trivial if the complement $M:=X\setminus Y$ of $Y$ is very strongly $(n-2)$-convex. 
\end{theorem}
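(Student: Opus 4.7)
The plan is to argue by contradiction, using Corollary \ref{cor:main1} as the key tool, after first bridging the gap between topological and holomorphic triviality of $[D]|_Y$. Suppose such an effective divisor $D$ exists. For concreteness I outline the smooth case, where $Y\subset X$ is a compact submanifold; the general singular hypersurface case would require replacing $Y$ by a resolution and adapting Corollary \ref{cor:main1} accordingly.

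Since $Y$ is then a compact K\"ahler manifold and $[D]|_Y$ has vanishing first real Chern class, Proposition \ref{prop:kasiwara} endows $[D]|_Y$ with a unitary flat structure $F$. By Lemma \ref{lem:prelim_limHq} applied with $G={\rm U}(1)$, the inverse $F^{-1}$ admits a unique unitary flat extension $\widetilde{F}$ to some open neighborhood of $Y$ in $X$. Setting $L:=[D]\otimes \widetilde{F}$, the restriction $L|_Y = F\otimes F^{-1}\cong \mathcal{O}_Y$ is holomorphically trivial, so Corollary \ref{cor:main1} yields a neighborhood $V_0$ of $Y$ on which $L|_{V_0}$ is trivial. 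Equivalently, $[D]|_{V_0}\cong \widetilde{F}^{-1}|_{V_0}$ inherits a flat Hermitian metric $h$.

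Writing $s_D\in \Gamma(X, [D])$ for the canonical section, the Poincar\'e--Lelong formula gives that $\log|s_D|_h^2$ is plurisubharmonic on $V_0$, pluriharmonic off ${\rm Supp}\,D$, and equal to $-\infty$ along $D\cap V_0$. The plan is then to couple this local plurisubharmonic datum with the very strong $(n-2)$-convex exhaustion $\psi$ of $M$ (whose Levi form has at least three positive eigenvalues near $Y$) so as either to build a global plurisubharmonic exhaustion of $X$ with too much positivity to be consistent with compactness, or to produce via an Andreotti--Grauert type finiteness argument on $M$ a nontrivial global holomorphic or meromorphic object on $X$ that is excluded by compactness.

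The main obstacle is this final gluing step: reconciling the logarithmic singularities of $\log|s_D|_h^2$ along $D$ with the blow-up behavior of $\psi$ along $Y$, while preserving sufficient Levi positivity, is the technical heart of Ohsawa's original argument in \cite{O2007}, and it is not directly resolved by the framework developed in the present paper. This difficulty is presumably why the author does not reprove Theorem \ref{thm:O2007} here but instead deduces a variant (Proposition \ref{prop:variantofo2007}) from Corollary \ref{cor:main1} under the stronger hypotheses that $[D]|_Y$ is holomorphically (rather than merely topologically) trivial together with an appropriate support assumption on $D$, so that both the Kashiwara reduction and the plurisubharmonic gluing step can be bypassed.
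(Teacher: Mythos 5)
First, a point of reference: the paper does not prove Theorem \ref{thm:O2007} at all --- it is quoted from Ohsawa \cite{O2007} --- and what the paper actually establishes is the variant Proposition \ref{prop:variantofo2007}, in which ``topologically trivial'' is replaced by ``unitary flat''. So the relevant comparison is with the proof of that variant. The first half of your argument is sound and coincides with its strategy: for smooth $Y$ (a compact K\"ahler manifold), Proposition \ref{prop:kasiwara} upgrades topological triviality of $[D]|_Y$ to unitary flatness, the flat extension exists and is unique as a germ by Lemma \ref{lem:prelim_limHq} with $G={\rm U}(1)$, and Corollary \ref{cor:main1} applied to $L=[D]\otimes\widetilde{F}$ makes $[D]$ unitary flat on a neighborhood $V_0$ of $Y$. (Your closing description of the variant is inaccurate, though: its hypothesis is unitary flatness of $[D]|_Y$, not holomorphic triviality, and it carries no support assumption on $D$.)

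The genuine gap is the endgame, which you leave as a ``plan'' and mischaracterize as requiring Ohsawa's original gluing of plurisubharmonic data. No such gluing is needed, and the paper's framework does finish the argument: with a flat metric $h$ on $[D]|_{V_0}$, the function $\psi:=-\log|\sigma_D|_h^2$ is pluriharmonic off ${\rm Supp}\,D$ and tends to $+\infty$ along it, so for $m\gg 1$ the level set $H_m=\{\psi=m\}$ is a compact Levi-flat real hypersurface lying in the region where the Levi form of the exhaustion $\vp$ has at least $3$ positive eigenvalues. Applying the maximum principle to $\vp$ restricted to the leaf $\mathcal{L}$ of the Levi foliation of $H_m$ passing through a point where $\vp|_{H_m}$ attains its maximum forces $\vp|_{\mathcal{L}}$ to be constant, so the Levi form of $\vp$ has at least $n-1$ zero eigenvalues at that point --- incompatible with having $3$ positive ones. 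This is exactly the last paragraph of the proof of Proposition \ref{prop:variantofo2007}; neither of your two suggested finishes (a global plurisubharmonic exhaustion of the compact manifold $X$, or an Andreotti--Grauert finiteness argument) is the tool that is actually used. Your caveat about singular $Y$ is, however, well placed: the passage from topological triviality to unitary flatness is precisely the step the author sidesteps by proving only the unitary-flat variant.
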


In what follows we see that one can deduce the following variant of this theorem from Corollary \ref{cor:main1}, in which the condition for $[D]|_Y$ is replaced with the unitary flatness. 
Here we remark that a theorem of Ohsawa \cite{O1982} plays key roles in the proofs of both Theorem \ref{thm:O2007} and Corollary \ref{cor:main1}.

\begin{proposition}\label{prop:variantofo2007}
Let $X$ be a compact K\"ahler manifold of dimension $n\geq 3$ and $Y$ be a hypersurface of $X$. 
Then there does not exist an effective divisor $D$ of $X$ such that $[D]|_Y$ is unitary flat if the complement $M:=X\setminus Y$ of $Y$ is very strongly $(n-2)$-convex. 
\end{proposition}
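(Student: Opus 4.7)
The plan is to argue by contradiction. Suppose such an effective divisor $D$ exists. Applying Lemma \ref{lem:prelim_limHq} with $G={\rm U}(1)$, I would first extend the given unitary flat structure on $[D]|_Y$ to a unitary flat line bundle $F$ on some open neighborhood $V$ of $Y$ such that $F|_Y\cong [D]|_Y$ as unitary flat, and hence as holomorphic, line bundles. Then $L:=[D]|_V\otimes F^{-1}$ satisfies $L|_Y\cong \mathcal{O}_Y$, so Corollary \ref{cor:main1} yields an open neighborhood $V_0\subset V$ of $Y$ on which $L|_{V_0}$ is holomorphically trivial. Consequently $[D]|_{V_0}\cong F|_{V_0}$ inherits a unitary flat structure, and I fix a corresponding flat Hermitian metric $h$ on $[D]|_{V_0}$.

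Next, taking a smaller neighborhood $V_0'$ of $Y$ with $\overline{V_0'}\subset V_0$, the plan is to construct a global $C^\infty$ Hermitian metric $H$ on $[D]$ by gluing $h$ on $V_0'$ with the flat Hermitian metric $h'$ on $[D]|_M$ defined by $|\sigma_D|_{h'}\equiv 1$ (using that $\sigma_D$ trivializes $[D]|_M$), interpolating via a partition of unity on the overlap $V_0\setminus \overline{V_0'}\subset M$. The Chern curvature $\Theta_H$ is then a $C^\infty$ $d$-closed $(1,1)$-form on $X$ representing $c_1([D])$ and supported in the compact set $V_0\setminus\overline{V_0'}\subset M$. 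Since $H|_M$ equals $|\sigma_D|_H^2$ in the frame $\sigma_D$, we have $\Theta_H|_M=-\sqrt{-1}\ddbar \log|\sigma_D|_H^2$ on $M$, whence the Dolbeault class of $\Theta_H|_M$ in $H^1(M,\Omega^1_M)$ vanishes. The next step is to invoke an Ohsawa-type injectivity of the natural map $H^1_c(M,\Omega^1_M)\to H^1(M,\Omega^1_M)$ under the very strong $(n-2)$-convexity assumption --- the Dolbeault-cohomology analog for $(1,1)$-forms of the injectivity $H^{0,2}_c(M,\mathcal{O}_M)\to H^{0,2}(M,\mathcal{O}_M)$ used in the proof of Corollary \ref{cor:main1} --- to conclude $\Theta_H=\delbar\gamma$ for some compactly supported $(1,0)$-form $\gamma$ on $M$. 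Extending $\gamma$ by zero across $Y$ to $X$, the identity $\Theta_H=\delbar\gamma$ holds on all of $X$; being $d$-closed and $\delbar$-exact on the compact K\"ahler manifold $X$, the $\ddbar$-lemma forces $\Theta_H$ to be $\ddbar$-exact, and therefore $c_1([D])=0$ in $H^2(X,\mathbb{R})$.

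Having shown $c_1([D])=0$, Proposition \ref{prop:kasiwara} implies that $[D]$ admits a unitary flat structure on all of $X$; letting $h_X$ be a corresponding flat Hermitian metric, the function $\log|\sigma_D|_{h_X}^2$ is plurisubharmonic on the connected compact K\"ahler manifold $X$, hence constant by the maximum principle. As $\sigma_D$ vanishes on the non-empty set $Y$, this constant must be $-\infty$, forcing $\sigma_D\equiv 0$ and contradicting the non-triviality of the effective divisor $D$.

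The main obstacle in this plan is establishing the key injectivity $H^1_c(M,\Omega^1_M)\to H^1(M,\Omega^1_M)$ under the very strong $(n-2)$-convexity assumption. Via the long exact sequence relating cohomology and cohomology with compact support, this injectivity reduces to the Hartogs-type extension statement that every holomorphic $1$-form defined on a deleted neighborhood $V\setminus Y$ of $Y$ extends to a holomorphic $1$-form on all of $M$. Such an extension should follow from Andreotti--Grauert / Ohsawa-type $L^2$-arguments on weakly $(n-2)$-complete K\"ahler manifolds analogous to those cited in \cite{O1982} and \cite[p.~294]{D1}, but the verification requires adapting those arguments from coefficients in $\mathcal{O}_M$ to coefficients in $\Omega^1_M$, and checking that the convexity threshold remains sufficient.
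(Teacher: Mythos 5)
Your first half coincides with the paper's: extend the unitary flat structure of $[D]|_Y$ to a flat line bundle $F$ near $Y$ (via Lemma \ref{lem:prelim_limHq} with $G={\rm U}(1)$), apply Corollary \ref{cor:main1} to $L=[D]\otimes F^{-1}$, and conclude that $[D]|_{V_0}$ carries a flat metric $h$. From that point on, however, your argument hinges on the injectivity of $H^1_c(M,\Omega^1_M)\to H^1(M,\Omega^1_M)$, and this is a genuine gap, not a routine verification. The results you would need to cite --- Ohsawa's reduction theorem \cite{O1982} and Demailly's argument in \cite[p.~294]{D1} --- are statements about cohomology in \emph{top degree in $p$}, i.e.\ about $H^{n,j}(M)$ for $j\geq n-2$ (equivalently, by Serre duality, about $H^{0,j}_c(M)\to H^{0,j}(M)$); their proofs exploit the specific bidegree $(n,j)$ (twisting by $K_M$, Lefschetz-type and $L^2$ arguments in top degree) and do not transfer to coefficients in $\Omega^1_M$, i.e.\ to bidegree $(n-1,n-1)$ on the dual side. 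Your own reduction of the claim to a Hartogs-type extension of holomorphic $1$-forms across $Y$ is correct as a reformulation, but that extension statement is likewise unproved under the stated convexity hypothesis. Since this injectivity is the load-bearing step of your second half, the proof is incomplete as it stands. (The rest of your endgame --- gluing metrics to get $\Theta_H$ compactly supported in $M$, the $\ddbar$-lemma on $X$, $c_1([D])=0$, Proposition \ref{prop:kasiwara}, and the maximum principle for $\log|\sigma_D|^2_{h_X}$ --- is fine once that step is granted.)

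The paper avoids this issue entirely by a more elementary route after the common first half: with the flat metric $h$ on $[D]|_{V_0}$ in hand, the level sets $H_m=\{\psi=m\}$ of $\psi:=-\log|\sigma_D|_h^2$ are, for $m\gg 1$, compact Levi-flat real hypersurfaces contained in the region where the Levi form of the exhaustion function $\vp$ has at least $3$ positive eigenvalues. At a point $p\in H_m$ where $\vp|_{H_m}$ attains its maximum, the maximum principle applied to $\vp$ restricted to the leaf of the Levi foliation through $p$ forces $\vp$ to be constant on that $(n-1)$-dimensional complex leaf, so the Levi form of $\vp$ has $n-1$ zero eigenvalues at $p$ --- incompatible with having $3$ positive ones. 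If you want to salvage your write-up without proving a new cohomological injectivity, you should replace your second half by this Levi-flat/maximum-principle argument, which uses only what is already established in the paper.
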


\begin{proof}
Let $X$ be a compact K\"ahler manifold and $Y$ be a hypersurface of $X$. 
Assuming that $M=X\setminus Y$ is very strongly $(n-2)$-convex, take a $C^\infty$ exhaustion plurisubharmonic function $\vp$ on $M$ whose Levi form has at least $3$ positive eigenvalues at any point of $W\setminus Y$ for a neighborhood $W$ of $Y$. 
Assume that there exists an effective divisor $D$ of $X$ such that $[D]|_Y$ is unitary flat. 
Denote by $F$ the flat extension of $[D]|_Y$ on a neighborhood $V$ of $Y$ in $X$: i.e., $F$ is the unitary flat line bundle on $V$ whose restriction to $Y$ coincides with $[D]|_Y$, see Remark \ref{rmk:shortcomparison_Ueda_obstr}. 
By applying Corollary \ref{cor:main1} by letting $L:=[D]\otimes F^{-1}$, one has that there exists a neighborhood $V_0$ of $Y$ in $V$ such that $[D]|_{V_0}$ is isomorphic to $F|_{V_0}$. 
In particular, $[D]|_{V_0}$ admits a structure as a flat line bundle. 
We may assume $V_0\subset W$ by shrinking $V_0$ if necessary. 

Take a flat metric $h$ on $[D]|_{V_0}$ and set $\psi:=-\log |\sigma_D|_h^2$, where $\sigma_D$ denotes the canonical section of $[D]$. 
Then, for a sufficiently large $m$, the levelset $H_m:=\{x\in V_0\mid \psi(x)=m\}$ is a compact Levi-flat hypersurface of $V_0$. 
As $H_m$ is compact and $\vp$ is continuous, there exists a point $p\in H_m$ at which the restriction $\vp|_{H_m}$ takes its maximum. 
Let $\mathcal{L}$ be the leaf of the Levi foliation of $H_m$ such that $p\in \mathcal{L}$. By applying 
the maximum principle for $\vp|_{\mathcal{L}}$, it follows that $\vp|_{\mathcal{L}}$ is constant. In particular, the Levi form of $\vp$ has at least $n-1$ zero eigenvalues at $p$, which is a contradiction since $p\in H_m\subset V_0\subset W$.
\end{proof}

\subsection{The FPLB on neighborhoods of a compact non-singular curve in a surface}\label{subsection:curvsurf}

Here we let $X$ be a connected non-singular complex surface and $Y$ be a connected non-singular hypersurface of $X$: i.e., $Y$ is a compact Riemann surface holomorphically embedded in $X$. 
As we have already seen that the FPLB holds on neighborhoods of $Y$ in $X$ when ${\rm deg}\,N_{Y/X}$ is negative (by a theorem of Peternell) and positive (by Corollary \ref{cor:main2} $(ii)$), we will investigate the case where ${\rm deg}\,N_{Y/X}=0$ in this subsection. 

Assume ${\rm deg}\,N_{Y/X}=0$. 
From a theorem of Kashiwara, it follows that $N_{Y/X}$ is unitary flat. 
Take a tubular neighborhood $V$ of $Y$ and denote by $U$ the Ueda line bundle $[Y]|_V\otimes \widetilde{N_{Y/X}}^{-1}$ (Refer to Remark \ref{rmk:shortcomparison_Ueda_obstr}). 
In \cite{U}, Ueda classified the complex structure of a neighborhood of $Y$ in $X$ into the following four cases by focusing the (non-) triviality of the restriction of $U$ to the infinitesimal neighborhoods of $Y$, the formal completion of $X$ along $Y$, and to a small neighborhood of $Y$. 
\begin{description}
\item[($\alpha$)] The case where $(i_\mu^{\mathcal{I}_Y})^*\mathcal{O}_V(U)$ is not trivial for a positive integer $\mu$. 
\item[($\beta'$)] The case where $N_{Y/X}$ is a torsion element of ${\rm Pic}^0(Y)$ and $i_{\infty}^*\mathcal{O}_V(U)$ is trivial. 
\item[($\beta''$)] The case where $N_{Y/X}$ is a non-torsion element of ${\rm Pic}^0(Y)$ and $U|_{V_0}$ is holomorphically trivial on a neighborhood $V_0$ of $Y$. 
\item[($\gamma$)] The case where $N_{Y/X}$ is a non-torsion element of ${\rm Pic}^0(Y)$, $i_{\infty}^*\mathcal{O}_V(U)$ is trivial, and $U|_{V_0}$ is not holomorphically trivial for any neighborhood $V_0$ of $Y$. 
\end{description}
Note that the well-definedness of the $\mu$-th obstruction class is shown in \cite[\S 2.1]{U} when $(i_{\mu-1}^{\mathcal{I}_Y})^*\mathcal{O}_V(U)$ is trivial ($\mu\geq 1$). The class $u_\mu(\mathcal{I}_Y, U)\in H^1(Y, N_{Y/X}^{-\mu})$ is called the {\it $\mu$-th Ueda class}, see also \cite{N}. 

Serre's example \cite[p. 232--234 Example 3.2]{H} gives a typical example of class $(\alpha)$ with $X$ compact K\"ahler. See also \cite[Example 4.19]{PS}, \cite[Example 1.7]{DPS1}, and \cite[Example 3.5]{K20152} for this example. 
In this case we cannot apply Corollary \ref{cor:main2} even under the assumption that $X$ is compact K\"ahler, since it follows from \cite[Theorem 1]{U} and \cite[p. 594 Corollary]{U} that the image of the natural map $\textstyle\varinjlim_\mu \Gamma(X, \mathcal{O}_M(K_M)\otimes \mathcal{O}_M(\mu D))\to \Gamma(M, \mathcal{O}_M(K_M))$ {\it cannot} be dense, whereas $H_c^2(M, \mathcal{O}_M)$ is Hausdorff. 
Therefore it seems worth asking: 
\begin{question}
Does the FPLB hold on neighborhoods of $Y$ in $X$ when $(Y, X)$ is of class $(\alpha)$, or when $(Y, X)$ is as in Serre's example? 
\end{question}

For $(Y, X)$ of class $(\beta')$, it is shown by Ueda that $U|_{V_0}$ is holomorphically trivial for a neighborhood $V_0$ of $Y$ \cite[Theorem 3]{U}. 
In this case, 
it is known that $Y$ is a (maybe multiple) fiber of a fibration $p\colon X\to R$ onto a compact Riemann surface $R$ if $X$ is compact K\"ahler \cite[Theorem 5.1]{N}. 
As $p$ is proper, we have $\Gamma(p^{-1}(\Delta), \mathcal{O}_X) = p^*(\Gamma(\Delta, \mathcal{O}_R))$ for a neighborhood $\Delta$ of the image of $Y$ by $p$. 
Therefore it follows from 
Lemma \ref{lem:well-def_obstretc} $(iii)$ and the same argument as in \cite[p. 588--589]{U} that, for any holomorphic line bundle $L$ on a neighborhood $V$ of $Y$, the $\mu$-th obstruction class $u_\mu(\mathcal{I}, L)$ is well-defined whenever $(i_{\mu-1}^{\mathcal{I}_Y})^*\mathcal{O}_V(L)$ is trivial ($\mu\geq 1$). 
\begin{question}
Does the FPLB hold on neighborhoods of $Y$ in $X$ when $(Y, X)$ is of class $(\beta')$, or when $Y$ is a fiber of a fibration $p\colon X\to R$ from a compact K\"ahler surface onto a compact Riemann surface $R$? 
\end{question}

For class $(\beta'')$, some sufficient conditions for the FPLB have been given in terms of irrational theoretical properties of the normal bundle, refer to \cite[\S 3]{KUprojK3} and \cite[Lemma 4.6]{K20242} for example. 
The following gives a typical example of $(Y, X)$ with $X$ compact K\"ahler. 
\begin{example}\label{eg:toroidal_ruled}
Let $C$ be a compact Riemann surface and $F$ be a non-torsion element of ${\rm Pic}^0(C)$. 
Denote by $X$ the ruled surface which is obtaind by the fiberwise projectivization of the vector bundle $E:=\mathbb{I}_C\oplus F$, where $\mathbb{I}_C$ denotes the holomorphically trivial line bundle on $C$. 
Let $Y$ be the section of the projection $p\colon X\to C$ which corresponds to the subbundle $F\subset E$. 
Then $X$ can naturally be regarded as a compactification of the total space of $F$ which has $Y$ as the zero section. 
Denote by $V$ the total space of $F$ which is regarded as a subset of $X$. 
Then one can naturally identify $[Y]|_V$ with $(p|_V)^*F$, which is unitary flat since $F$ is unitary flat by a theorem of Kashiwara. Thus $(Y, X)$ is of class $(\beta'')$ in this example. 
\end{example}

When $N_{Y/X}$ is {\it non-torsion} (i.e., it is a non-torsion element of ${\rm Pic}^0(Y)$), the FPLB problem for the Ueda line bundle $U$ occurs when one try to decide whether $(Y, X)$ is of class $(\beta'')$. 
Whereas Ueda explicitly constructed an example of class $(\gamma)$ in \cite[\S 5.4]{U} and there is a slight generalization of his construction \cite{KO}, it seems that no example is known for $(Y, X)$ of class $(\gamma)$ when $X$ is compact. 
In order to investigate whether or not there exists an example of class $(\gamma)$ with $X$ compact, studying the following example should be important. 
\begin{example}\label{ex:blP29pts}
Let $C_0$ be a non-singular cubic curve in the projective plane $\mathbb{P}^2$, 
$\pi_1\colon X_1 \to \mathbb{P}^2$ be the blow-up at a point of $C_0$, 
and $\pi_\nu\colon X_\nu \to X_{\nu-1}$ be the blow-up of $X_{\nu-1}$ at a point of the strict transform $C_{\nu-1}$ of $C_{\nu-2}$ for $\nu=2, 3, \dots, 9$. 
Denote $X_9$ by $X$, the strict transform of $C_8$ by $Y$, and 
by $M$ the complement $X\setminus Y$. 
It follows from the blow-up formula that $K_X\cong [-Y]$ holds and that, via the pull-back by the restriction of $\pi:=\pi_1\circ\pi_2\circ\cdots\circ\pi_9$, the normal bundle $N_{Y/X}$ corresponds to the line bundle $[C_0]|_{C_0}\otimes[-q_1-q_2\cdots -q_9]$ on $C_0$, where $q_\nu$ is the image of the center of the $\nu$-th blow-up. 
As ${\rm deg}([C_0]|_{C_0}\otimes[-q_1-q_2\cdots -q_9])=3^2-9=0$, we have $N_{Y/X}\in {\rm Pic}^0(Y)$. 
It is classically well-known that $X$ admits a structure as an elliptic surface if and only if $N_{Y/X}$ is torsion, and $Y$ is a fiber of the elliptic fibration in this case (See \cite[\S 5]{N} for example). 
From Arnol'd's and Ueda's theorems \cite{A} \cite{U} it follows that $(Y, X)$ is of class $(\beta'')$ if $N_{Y/X}$ satisfies {\it the Diophantine condition}: i.e., if there exist positive numbers $A$ and $\alpha$ such that $d(\mathbb{I}_Y, N_{Y/X}^{\mu})\geq A\cdot n^{-\mu}$ holds for any positive integer $\mu$, where $d$ is an invariant distance on ${\rm Pic}^0(Y)$ which is defined in \cite[\S 4.5]{U} (Note that this distance is Lipschitz equivalent to the Euclidean distance on ${\rm Pic}^0(Y)$, see \cite[\S A.3]{KUK3}). 
Moreover, it is also shown in \cite[\S 3]{KUprojK3} that the FPLB holds on neighborhoods of $Y$ in $X$ when $N_{Y/X}$ satisfies the Diophantine condition. 
When $N_{Y/X}$ is non-torsion, it is shown by \cite{Br} (and generalized in \cite[Corollary 1.5]{K2024}) that $(Y, X)$ is of class $(\beta'')$ if and only if $K_X^{-1} (\cong [Y])$ is {\it semi-positive}: i.e., it admits a $C^\infty$ Hermitian metric whose Chern curvature form is positive semi-definite at any point of $X$. 
See also \cite{DPS}, \cite[\S 1]{D}, and \cite{K2019} for this example. 
\end{example}

In the rest of this section, we show the following for the FPLB problem on this example: 
\begin{proposition}\label{prop:cor3}
Let $X, Y$, and $M$ be as in Example \ref{ex:blP29pts}. 
Assume that $N_{Y/X}$ is non-torsion. 
Then the following are equivalent: \\
$(i)$ The FPLB holds on neighborhoods of $Y$ in $X$. \\
$(ii)$ $K_X^{-1}$ is semi-positive and $H_c^{0, 2}(M)$ is Hausdorff. \\
$(iii)$ $K_X^{-1}$ is semi-positive and $H^{2, 1}(M)=0$. \\
$(iv)$ $K_X^{-1}$ is semi-positive and $H^{2, 1}(M)$ is finite dimensional. 
\end{proposition}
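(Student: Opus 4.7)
The implication $(iii)\Rightarrow (iv)$ is immediate. For $(iv)\Rightarrow (ii)$, I would argue that the semi-positivity in $(iv)$ together with the non-torsion condition on $N_{Y/X}$ allows one to construct, as in Remark \ref{rmk:elemprf_cormain2ii}, a plurisubharmonic exhaustion of $M$ near the boundary, so by Andreotti--Grauert type theorems the finite-dimensionality of $H^{2,1}(M)$ forces it to be Hausdorff. By the Laufer--Serre duality argument applied in the proof of Corollary \ref{cor:main2} $(ii)$ (using \cite[Theorem 1.1]{LL1}), Hausdorffness of $H^{2,1}(M)$ then transfers to $H_c^{0,2}(M)$.

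For $(ii)\Rightarrow (i)$, I would apply Corollary \ref{cor:main2} $(i)$ with $D=Y$. Since $K_X\cong [-Y]$, $K_M$ is trivial, and the density hypothesis of that corollary reduces to the density of $\varinjlim_\mu\Gamma(X,\mathcal{O}_X((\mu-1)Y))\to\Gamma(M,\mathcal{O}_M)$. A semi-positive Hermitian metric $h$ on $[Y]$, modified by the regularized maximum as in Remark \ref{rmk:elemprf_cormain2ii} and combined with the non-torsion condition on $N_{Y/X}$ to obtain strict plurisubharmonicity of $-\log|\sigma_Y|_h^2$ outside a compact subset of $M$, exhibits $Y$ as pseudoconcave of order $2$ in the sense of \cite{O2021}. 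Ohsawa's density theorem \cite[Theorem 0.3]{O2021} then yields the required density, and Corollary \ref{cor:main2} $(i)$ delivers the FPLB.

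The heart of the proof is $(i)\Rightarrow (iii)$. First, I would deduce semi-positivity of $K_X^{-1}$: under the FPLB the Ueda line bundle $U=[Y]\otimes \widetilde{N_{Y/X}}^{-1}$ cannot be of class $(\gamma)$, since that would directly violate FPLB for $L=U$. Drawing on the detailed obstruction analysis on this 9-blow-up carried out in our previous work \cite{K20242}, I expect to rule out class $(\alpha)$ as well: a non-vanishing Ueda class $u_{\mu_0}$, together with the non-torsion hypothesis on $N_{Y/X}$, can be shown to produce (via suitable twisting by unitary flat line bundles and the characterization in \cite[Corollary 1.5]{K2024}) another formally trivial holomorphic line bundle on a neighborhood of $Y$ which itself fails to be trivial, contradicting FPLB. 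Hence $(Y,X)$ lies in class $(\beta'')$, so $K_X^{-1}$ is semi-positive by Brunella's theorem. To obtain the vanishing $H^{2,1}(M)=0$, I would use the explicit description of $M$ provided by \cite{K20242} in the semi-positive case together with an Ohsawa-type $L^2$ vanishing applied to the pseudoconcave exhaustion already constructed.

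The main obstacle is the implication $(i)\Rightarrow (iii)$, specifically (a) ruling out class $(\alpha)$ for the Ueda bundle under the FPLB hypothesis, which requires the fine obstruction analysis specific to this 9-blow-up from \cite{K20242}, and (b) upgrading the Hausdorffness supplied by Laufer--Serre duality to the honest vanishing $H^{2,1}(M)=0$, which relies on a detailed understanding of the complex-analytic structure of the complement of an elliptic curve with flat normal bundle in this rational surface.
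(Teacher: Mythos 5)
There are two genuine gaps. First, in your step $(ii)\Rightarrow(i)$ the appeal to pseudoconcavity of order $2$ and \cite[Theorem 0.3]{O2021} cannot work here: under $(ii)$ the bundle $K_X^{-1}=[Y]$ is semi-positive with $N_{Y/X}$ non-torsion, so by Brunella's theorem (\cite{Br}, \cite[Corollary 1.5]{K2024}) $[Y]$ is unitary \emph{flat} on a neighborhood of $Y$; hence $-\log|\sigma_Y|_h^2$ is pluriharmonic near the boundary of $M$ and its level sets are Levi-flat, so there is no strictly plurisubharmonic exhaustion outside a compact set and the hypothesis of Ohsawa's density theorem fails. The paper gets the density hypothesis of Corollary \ref{cor:main2} $(i)$ for free by a different route: using those very Levi-flat level sets and the density of the leaves (the holonomy is the monodromy of the non-torsion flat bundle $N_{Y/X}$), the maximum principle shows $M$ admits no non-constant holomorphic function (Lemma \ref{lem:constr_Leviflat_holconst}), whence $\Gamma(M,\mathcal{O}_M(K_M))=\mathbb{C}\cdot\sigma$ with $\sigma$ the meromorphic $2$-form generating $\Gamma(X,\mathcal{O}_X(K_X)\otimes\mathcal{O}_X(Y))$, and density is then trivial.

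Second, and more seriously, your plan for $(i)\Rightarrow(iii)$ never actually uses the FPLB hypothesis to obtain $H^{2,1}(M)=0$: you propose an ``Ohsawa-type $L^2$ vanishing'' depending only on semi-positivity, but such an unconditional vanishing cannot exist, since by Theorem \ref{thm:exp2bl9pts} $(iii)$ (whose proof rests on this very proposition together with \cite[Corollary 1.3]{K20242}) there are uncountably many parameters with $K_X^{-1}$ semi-positive for which the FPLB fails, and for those $H^{2,1}(M)\neq 0$. The vanishing in $(iii)$ is an arithmetic, not a curvature, phenomenon; the only deep input making ``Hausdorff $\Rightarrow$ zero'' true is \cite[Theorem 1.1]{K20242} (Lemma \ref{lem:onhausdorffH1}). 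What the paper actually proves is the contrapositive $\neg(ii)\Rightarrow\neg(i)$: if $H_c^{0,2}(M)$ is non-Hausdorff then, since the continuous surjection $\iota^{2,0}\colon H_c^{0,2}(M)\to (H^{2,0}(M))^\vee\cong\mathbb{C}$ would otherwise be an injection into a Hausdorff space, one gets $\dim H_c^{0,2}(M)>1=\dim H^2(X,\mathcal{I}_Y)$, and Lemma \ref{lem:forprfcor3} then manufactures a line bundle $L$ with $v(\mathcal{I}_Y,L)\neq 0$ that is formally trivial (all $H^1(Y,N_{Y/X}^{-\mu})$ vanish) yet non-trivial on every neighborhood. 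This dimension-count construction is the missing idea. A smaller point: your worry about ``ruling out class $(\alpha)$'' is moot — class $(\alpha)$ cannot occur here because every obstruction group $H^1(Y,N_{Y/X}^{-\mu})$ vanishes for an elliptic curve with non-torsion flat normal bundle, so the only dichotomy is $(\beta'')$ versus $(\gamma)$, and FPLB excludes $(\gamma)$ as you say.
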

 
In the proof of Proposition \ref{prop:cor3}, we apply \cite[Theorem 1.1]{K20242}, in which the cohomology $H^{2, 1}(M) (\cong H^{0, 1}(M))$ is investigated for Example \ref{ex:blP29pts} under the assumption that $K_X^{-1}$ is semi-positive. 
As the topology of $H^{2, 1}(M)$ that we are using in the present paper is different from that in \cite{K20242}, we first show the following as a preparation for the proof of the proposition: 
\begin{lemma}\label{lem:onhausdorffH1}
Let $X, Y$, and $M$ be as in Example \ref{ex:blP29pts}. 
Assume that $N_{Y/X}$ is non-torsion and $K_X^{-1}$ is semi-positive. 
Then the following are equivalent: \\
$(i)$ $H_c^{0, 2}(M)$ is Hausdorff. \\
$(ii)$ $H^{2, 1}(M)$ is Hausdorff in the topology as in \cite[\S 1.4.3]{Obook}: i.e., in the topology given by considering the resolution of $\mathcal{O}_M(K_M)$ by the sheaves of currents and giving the Fr\'echet topology of the uniform convergence on bounded sets (the strong dual topology) on the spaces of currents. \\
$(iii)$ $H^{2, 1}(M)$ is Hausdorff in the topology as in \cite{K20242}: i.e., in the topology given by considering the resolution of $\mathcal{O}_M(K_M)$ by the sheaves of $C^\infty$ forms and giving the Fr\'echet topology of uniform convergence on compact sets in all the derivatives of coefficient functions on the spaces of $C^\infty$ forms. \\
$(iv)$ $H^{2, 1}(M)=0$. \\
$(v)$ $H^{2, 1}(M)$ is finite dimensional. 
\end{lemma}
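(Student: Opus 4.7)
The plan is to split the lemma into two groups of equivalences. The implications $(iv)\Rightarrow(v)\Rightarrow(ii), (iii)$ are immediate, since any finite-dimensional vector space carries a unique Hausdorff vector topology; hence the real content lies in (a) showing that the three Hausdorffness conditions $(i)$, $(ii)$, $(iii)$ are mutually equivalent, and (b) showing that Hausdorffness actually forces the vanishing $H^{2,1}(M)=0$. Once (a) is in hand, the trivial implications will also supply $(v)\Rightarrow(i)$, so that only (a) and one implication of the form Hausdorff $\Rightarrow$ zero remain.

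For group (a), I would invoke the same Serre--duality machinery already used in the proof of Corollary \ref{cor:main2}, namely \cite[Theorem 1.1]{LL1} together with \cite[Theorem 1.33]{Obook}. Both of the topologies on $H^{2,1}(M)$ described in $(ii)$ and $(iii)$ arise as quotient topologies of Fr\'echet (or LF) spaces, of $(2,0)$-currents and of smooth $(2,0)$-forms respectively, and in each case Hausdorffness is equivalent to the relevant $\delbar$-operator on $(2,0)$-objects having closed range. By the closed range theorem applied to the natural transpose pairings (smooth forms against compactly supported currents and vice versa), closed range of $\delbar$ on smooth $(2,0)$-forms (resp.\ on $(2,0)$-currents) is equivalent to closed range of the adjoint $\delbar$ on compactly supported $(0,1)$-currents (resp.\ compactly supported smooth $(0,1)$-forms), which in either case is exactly the statement that $H_c^{0,2}(M)$ is Hausdorff. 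Hence $(i)$, $(ii)$, $(iii)$ are equivalent.

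For group (b), the substantive implication $(i)\Rightarrow(iv)$, I would apply \cite[Theorem 1.1]{K20242}. Under the running assumptions (Example \ref{ex:blP29pts} with $K_X^{-1}$ semi-positive and $N_{Y/X}$ non-torsion), that result provides an explicit description of $H^{2,1}(M)\cong H^{0,1}(M)$ in the topology of \cite{K20242}, which by the previous paragraph coincides with topology $(iii)$. The conclusion to be extracted is a dichotomy: either $H^{2,1}(M)=0$, or else it is infinite-dimensional and fails to be Hausdorff. Combined with the equivalence of the three topologies from group (a), this yields $(i)\Rightarrow(iv)$, thereby closing the chain of implications.

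The main obstacle is precisely group (b): one must extract from \cite[Theorem 1.1]{K20242} the Hausdorff-or-zero dichotomy in a form usable here, and reconcile the $C^\infty$-Fr\'echet topology used there with the current-based topology $(ii)$ and the $H_c^{0,2}$ formulation $(i)$. This reconciliation is exactly what group (a) accomplishes, so with the Serre-duality identification in place, the translation is transparent; the remainder of the argument is standard functional-analytic bookkeeping.
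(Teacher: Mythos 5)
Your proposal is correct in substance and uses the same key ingredients as the paper: the Laurent-Thi\'ebaut--Leiterer Serre-duality theorem \cite[Theorem 1.1]{LL1} for $(i)\Leftrightarrow(ii)$, a comparison of the current-based and $C^\infty$-based topologies for relating $(ii)$ and $(iii)$, and \cite[Theorem 1.1]{K20242} for the substantive implication ``Hausdorff $\Rightarrow$ vanishing''. The organizational difference is that you prove the full two-way equivalence of $(i)$, $(ii)$, $(iii)$ up front, which forces you to establish the harder direction $(iii)\Rightarrow(ii)$ by a closed-range/transpose argument in the LF/DF setting (this is essentially Laufer's theorem, \cite[Theorem 2.1]{La}, and is delicate to carry out from scratch). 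The paper avoids this entirely: it only proves the easy direction $(ii)\Rightarrow(iii)$, by observing that the identity on $H^{2,1}(M)$ is continuous from the $C^\infty$-topology to the current topology and that a space admitting a continuous injection into a Hausdorff space is Hausdorff, and then closes the loop through the cycle $(iii)\Rightarrow(iv)\Rightarrow(v)\Rightarrow(ii)$. Your route works but buys nothing extra here and costs a nontrivial functional-analytic argument that you only sketch.

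One genuine imprecision: your claim that $(v)\Rightarrow(ii),(iii)$ is ``immediate, since any finite-dimensional vector space carries a unique Hausdorff vector topology'' is not a proof. The quotient topology on $H^{2,1}(M)=Z/B$ is a vector topology that is Hausdorff if and only if $B$ is closed in $Z$; a finite-dimensional quotient can perfectly well be non-Hausdorff if the subspace is dense rather than closed. What saves the implication is that $B$ is the image of a continuous linear map of Fr\'echet spaces with finite-codimensional image, so it is closed by Serre's lemma (\cite[Lemme 2]{Se}, i.e.\ Banach's open mapping theorem), which is exactly what the paper invokes. With that substitution your argument is complete.
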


\begin{proof}
The implication $(iii)\implies (iv)$ is a direct consequence of \cite[Theorem 1.1]{K2024} (Note that $H^{2, 1}(M) = H^1(M, \mathcal{O}_M(K_M)) =  H^1(M, \mathcal{O}_M) =H^{0, 1}(M)$ follows because $K_M=K_X|_M\cong [-Y]|_M$ is holomorphically trivial, and that $H^{0, 1}(X)=0$ because $X$ is a rational surface). 
The implication $(iv)\implies (v)$ is clear, and $(v)\implies (ii)$ follows from Banach's open mapping theorem, refer to \cite[Lemme 2]{Se}. 
By virtue of a theorem of Larent-Thi\'{e}baut--Leiterer \cite{LL1} (refer also to \cite[Theorem 1.33]{Obook}), the conditions $(i)$ and $(ii)$ are equivalent to each other. 
Therefore it is enough to show the implication $(ii)\implies (iii)$ (Although it is just a part of \cite[Theorem 2.1]{La}, here we describe a proof of this implication for the reader's convenience). 
Denote by $K_{(ii)}$ the topological vector space defined by topologizing $H^{2, 1}(M)$ as in \cite[\S 1.4.3]{Obook}, by $K_{(iii)}$ the topological vector space defined by topologizing $H^{2, 1}(M)$ as in \cite{K20242}, and by $f\colon K_{(iii)}\to K_{(ii)}$ the linear map obtained by considering the identity map on $H^{2, 1}(M)$. 
Then $f$ is the map induced from the natural map $\iota\colon \Gamma(M, \mathcal{A}_M^{2, 1}) \to (\Gamma_c(M, \mathcal{A}_M^{0, 1}))^\vee$. 
This map $\iota$ is continuous, 
since, on any compact subsets, the integration of the limit coincides with the limit of the integrations for any volume forms which uniformly converge on a compact subset. 
Therefor $f$ is also continuous, which proves the implication $(ii)\implies (iii)$ because any topological space which admits an injective continuous map to a Hausdorff space is Hausdorff. 
\end{proof}

We also use the following:  
\begin{lemma}\label{lem:constr_Leviflat_holconst}
Let $\Omega$ be a complex manifold and $Y$ be a connected compact non-singular hypersurface of $\Omega$. 
Assume that $M:=\Omega\setminus Y$ is connected, $N_{Y/\Omega}$ is non-torsion element of ${\rm Pic}^0(Y)$, and that there exists a neighborhood $V$ of $Y$ in $\Omega$ such that $[Y]|_V$ is unitary flat. 
Then $M$ admits no non-constant holomorphic function. 
\end{lemma}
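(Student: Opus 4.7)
The plan is to argue by contradiction: assuming $f\in\Gamma(M,\mathcal{O}_M)$ is non-constant, I will construct a compact Levi-flat real hypersurface $H_m\subset M$ whose Levi leaves are all dense, and show that $f$ must be constant on $H_m$, which then forces $f$ to be constant on $M$. Concretely, fix a flat Hermitian metric $h$ on $[Y]|_V$ and let $\sigma_Y$ denote the canonical section of $[Y]$. Then $\psi:=-\log|\sigma_Y|_h^2$ is pluriharmonic on $V\setminus Y$ and tends to $+\infty$ along $Y$. After shrinking $V$ to a small tubular neighbourhood of $Y$, for each sufficiently large $m$ the level set $H_m:=\{\psi=m\}$ is a compact Levi-flat real hypersurface contained in $V\setminus Y\subset M$. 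In a flat trivialisation $\{(V_j,e_j)\}$ of $[Y]|_V$ with $|e_j|_h\equiv 1$ and constant unitary transitions $t_{jk}\in U(1)$, writing $\sigma_Y=s_j e_j$ on $V_j$, the Levi leaves appear locally as $\{s_j=c\}$ with $|c|=e^{-m/2}$.

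The key geometric input is density of these leaves in $H_m$. Since $V$ is a tubular neighbourhood of $Y$, the inclusion induces $\pi_1(Y)\cong\pi_1(V)$, and via this isomorphism the monodromy of $[Y]|_V$ coincides with the monodromy $\rho\colon\pi_1(Y)\to U(1)$ of the unitary flat line bundle $N_{Y/\Omega}$ (by uniqueness of the flat extension; cf.\ Remark \ref{rmk:shortcomparison_Ueda_obstr}). The gluing $s_j=t_{jk}s_k$ then shows that the holonomy of the Levi foliation around a loop $\gamma$ acts on the leaf-parametrising circle $\{|c|=e^{-m/2}\}$ by multiplication by $\rho(\gamma)$. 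Since $N_{Y/\Omega}$ is non-torsion, Lemma \ref{lem:inj_unitaryflattohol} applied to each power $N_{Y/\Omega}^n$ implies $\rho^n\not\equiv 1$ for every $n\geq 1$, so $\rho(\pi_1(Y))$ is an infinite subgroup of $U(1)$; but every infinite subgroup of $U(1)$ is dense, so every orbit on this circle is dense, and hence every leaf of the Levi foliation of $H_m$ is dense in $H_m$.

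To conclude, by compactness $|f||_{H_m}$ attains its maximum at some $p\in H_m$. Let $\mathcal{L}$ be the leaf through $p$, a connected $(n-1)$-dimensional complex submanifold of $V\setminus Y$ (where $n:=\dim\Omega\geq 2$). Then $f|_\mathcal{L}$ is holomorphic with $|f|_\mathcal{L}|$ attaining a maximum at $p$, so the maximum modulus principle yields $f|_\mathcal{L}\equiv f(p)$; continuity of $f$ together with density of $\mathcal{L}$ in $H_m$ then gives $f\equiv f(p)$ on $H_m$. The set $\{f=f(p)\}\subset M$ is a complex analytic subvariety, so either it equals $M$ or it has real codimension at least $2$ everywhere; since $H_m\subset\{f=f(p)\}$ has real codimension $1$, necessarily $\{f=f(p)\}=M$, contradicting non-constancy of $f$. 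The main obstacle in this plan is the density step: one has to identify the holonomy of the Levi foliation with the monodromy $\rho$ of $N_{Y/\Omega}$ and then exploit the elementary but crucial fact that non-torsion characters $\pi_1(Y)\to U(1)$ automatically have dense image.
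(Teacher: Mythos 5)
Your proposal is correct and follows essentially the same route as the paper: level sets of $-\log|\sigma_Y|_h^2$ for a flat metric $h$ give compact Levi-flat hypersurfaces $H_m$, the holonomy of the Levi foliation is identified with the (non-torsion, hence dense-image) monodromy of $N_{Y/\Omega}$ so that every leaf is dense, and the maximum modulus principle on the leaf through a maximum point of $|f|$ forces $f$ to be constant on $H_m$ and hence on $M$. You merely spell out details (density of infinite subgroups of ${\rm U}(1)$, the codimension argument for $\{f=c\}$) that the paper delegates to the proof of Proposition \ref{prop:variantofo2007} and to cited references.
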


\begin{proof}
Let $\{H_m\}_m$ be a family of compact Levi-flat hypersurfaces on a neighborhood of $Y$ as in the proof of Proposition \ref{prop:variantofo2007}. 
For $m>>1$ fixed, any leaf of the Levi foliation of $H_m$ is dense, since the holonomy of this foliation coincides with the monodromy of the unitary flat line bundle $N_{Y/\Omega}$. 
Thus the assertion can be shown by applying the standard argument (See also the arguments in the proofs of \cite[Lemma 2.2]{KUK3} and \cite[Proposition 2.8, Lemma 2.10]{K20242}) as follows: 
For $f\in \Gamma(M, \mathcal{O}_M)$, the maximum principle applied to $f|_{\mathcal{L}}$ implies that $f$ is constant on $\overline{\mathcal{L}}=H_m$, where $\mathcal{L}$ is the leaf of the Levi foliation on $H_m$ which includes a point at which the maximum value of $|f|_{H_m}|$ is attained. 
Thus, for some constant $c$, $\{x\in M\mid f(x) = c\}$ is an analytic subset which includes a real hypersurface $H_m$, which implies that $f\equiv c$ holds on $M$. 
\end{proof}

\begin{proof}[Proof of Proposition \ref{prop:cor3}]
The equivalence of the assertions $(ii), (iii)$, and $(iv)$ is a direct consequence of Lemma \ref{lem:onhausdorffH1}. 
To prove the implication $(ii) \implies (i)$, assume that $K_X^{-1}$ is semi-positive and $H_c^{0, 2}(M)$ is Hausdorff. 
From \cite{Br} we have that $K_X^{-1}|_V=[Y]|_V$ is flat for a neighborhood $V$ of $Y$ in $X$ (see also \cite[Corollary 1.5]{K2024}). 
Thus, from Lemma \ref{lem:constr_Leviflat_holconst} we infer that $M$ has no non-constant holomorphic function. 
Therefore $\Gamma(M, \mathcal{O}_M(K_M))=\mathbb{C}\cdot \sigma$, where $\sigma$ is a meromorphic $2$-form on $M$ which generates $\Gamma(X, \mathcal{O}_X(K_X)\otimes \mathcal{O}_X(Y))$ (Recall that $K_X\cong [-Y]$ follows from the blow-up formula). 
Thus the assertion $(i)$ follows from $(ii)$ by virtue of Corollary \ref{cor:main2}. 
In what follow we show (the contraposition of) the implication $(i) \implies (ii)$. Assuming that the assertion $(ii)$ does not hold, we will show that the FPLB does not hold on neighborhoods of $Y$ in $X$. 

First let us consider the case where $K_X^{-1}$ is not semi-positive. 
In this case, it follows from \cite[Corollary 1.5]{K2024} that $[Y]|_V$ is not unitary flat for any neighborhood $V$ of $Y$. 
Therefore the Ueda line bundle $U:=[Y]|_V\otimes \widetilde{N_{Y/X}}^{-1}$ is not trivial for any $V$. 
By an inductive application of Lemma \ref{lem:well-def_obstretc} $(i)$, we have that $i_\infty^*\mathcal{O}_V(U)$ is trivial, since $U|_Y$ is trivial by construction and $H^1(Y, \mathcal{I}_Y^\mu/\mathcal{I}_Y^{\mu+1})=H^1(Y, N_{Y/X}^{-\mu})=0$ holds for any $\mu\geq 1$ because $Y$ is an elliptic curve and $N_{Y/X}$ is non-torsion (Recall Remark \ref{rmk:van_of_N-impliesj^*triv}. Here we also used the basic fact that $H^1(Y, \mathcal{O}_Y(F))=0$ holds for any holomorphically non-trivial unitary flat line bundle $F$ on an elliptic curve $Y$, see \cite[\S 1.2]{U} for example). 
Thus the FPLB does not hold on neighborhoods $V$ of $Y$ in $X$ in this case. 

Next we show the assertion by assuming that $K_X^{-1}$ is semi-positive. In this case $H_c^{0, 2}(M)$ is non-Hausdorff, since we are assuming that the assertion $(ii)$ does not hold. 
Consider the canonically defined continuous linear map 
$\iota^{2, 0}\colon H_c^{0, 2}(M) \to (H^{2, 0}(M))^\vee$, where the topology of $H^{2, 0}(M)$ is as in \cite[\S 1.4.3]{Obook}. 
We have that $(H^{2, 0}(M))^\vee$ is topologically isomorphic to $\mathbb{C}$ with Euclidean topology, since $H^{2, 0}(M)=\Gamma(M, \mathcal{O}_M(K_M))=\mathbb{C}\cdot \sigma$ holds again by Lemma \ref{lem:constr_Leviflat_holconst}, where $\sigma$ is a meromorphic $2$-form on $M$ as above. 
It is known that $\iota^{2, 0}$ is surjective (see \cite[\S 1.4.3]{Obook}). 
Therefore we have that ${\rm dim}\,H_c^{0, 2}(M)>1$ holds, since otherwise $\iota^{2, 0}$ is an injective continuous map to a Hausdorff space, which contradicts to the non-Hausdorffness of $H_c^{0, 2}(M)$. 
Thus the assertion follows from Lemma \ref{lem:forprfcor3} below, since $H^2(X, \mathcal{I}_Y)=H^2(X, \mathcal{O}_X(-Y))=H^2(X, \mathcal{O}_X(K_X))=H^{2, 2}(X)\cong \mathbb{C}$. 
\end{proof}

\begin{lemma}\label{lem:forprfcor3}
Let $X$ be a connected compact K\"ahler surface, 
$\mathcal{I}\subset \mathcal{O}_X$ be a coherent ideal sheaf, 
$Y$ be the support of $\mathcal{O}_X/\mathcal{I}$, and $M$ be the complement $X\setminus Y$. 
Assume that $Y\not=\emptyset$ and that ${\rm dim}\,H_c^{0, 2}(M)>{\rm dim}\,H^2(X, \mathcal{I})$ holds. 
Then there exist a neighborhood $V$ of $Y$ and a holomorphic line bundle $L$ on $V$ such that $(i_0^{\mathcal{I}})^*\mathcal{O}_V(L)$ is trivial and $L|_{V_0}$ is holomorphically non-trivial for any neighborhood $V_0$ of $Y$ in $V$. 
In particular, the FPLB does not hold on neighborhoods of $Y$ in $X$ if $H^1(Y, \mathcal{I}^\mu/\mathcal{I}^{\mu+1})=0$ holds for any $\mu\geq 1$. 
\end{lemma}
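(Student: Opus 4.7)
The plan is to construct a holomorphic line bundle $L$ on a neighborhood of $Y$ whose obstruction class $v(\mathcal{I}, L)\in H_c^{0,2}(M)$ (defined in \S\ref{section:def_obstr_class_vL}) is non-zero; Proposition \ref{prop:van_v_implies_Ltriv} then supplies the required non-triviality of $L$ on every neighborhood of $Y$, while $(i_0^{\mathcal{I}})^*\mathcal{O}_V(L)$ will be trivial by construction. Concretely, for any $\xi\in\textstyle\varinjlim_V H^1(V, \mathcal{I})$, the line bundle $L:=\alpha_{\mathcal{I},1}(\xi)$ has $(i_0^{\mathcal{I}})^*\mathcal{O}_V(L)$ trivial by Lemma \ref{lem:iikae_Ltriv_fund}$(ii)$, and $v(\mathcal{I}, L)$ coincides with the image $\phi(\xi)$ of $\xi$ under the natural composition
\[
\phi\colon \textstyle\varinjlim_V H^1(V, \mathcal{I}) \longrightarrow \textstyle\varinjlim_K H^{0,1}(M\setminus K) \xrightarrow{\delta} H_c^{0,2}(M),
\]
where $\delta$ is the connecting map in the sequence $(\ref{exseq:long_ex_seq_H_cHlimH})$. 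Hence it suffices to produce $\xi$ with $\phi(\xi)\ne 0$.

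The key step is to identify the image of $\phi$ with the kernel of the natural extension-by-zero map $\mathrm{ext}_0\colon H_c^{0,2}(M)\to H^2(X, \mathcal{I})$. I would construct $\mathrm{ext}_0$ by choosing a $\Gamma_c$-acyclic resolution of $\mathcal{I}$ on $X$ (for instance the Godement flabby resolution), noting that its restriction to $M$ is a $\Gamma_c$-acyclic resolution of $\mathcal{I}|_M=\mathcal{O}_M$ computing $H^\bullet_c(M, \mathcal{O}_M)$, and observing that a compactly supported section on $M$ extends by zero to a section on $X$. To compare $\phi$ with $\mathrm{ext}_0$, I would apply Mayer--Vietoris for the cover $\{M, V\}$ with coefficients in $\mathcal{I}$, obtaining the connecting map $\delta'\colon \textstyle\varinjlim_V H^{0,1}(V\setminus Y)\to H^2(X, \mathcal{I})$, and then verify at the chain level (by an explicit cutoff-function computation analogous to the one in Proposition \ref{prop:key}) that the diagram
\[
\begin{array}{ccc}
\textstyle\varinjlim_K H^{0,1}(M\setminus K) & \xrightarrow{\delta} & H_c^{0,2}(M)\\[2pt]
\| & & \downarrow \mathrm{ext}_0 \\[2pt]
\textstyle\varinjlim_V H^{0,1}(V\setminus Y) & \xrightarrow{\delta'} & H^2(X, \mathcal{I})
\end{array}
\]
commutes. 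The inclusion $\mathrm{image}(\phi)\subset \ker(\mathrm{ext}_0)$ is then immediate from this commutativity together with Mayer--Vietoris exactness, which places the image of $\textstyle\varinjlim_V H^1(V,\mathcal{I})$ inside $\ker(\delta')$. For the reverse inclusion, I would observe that the natural map $H_c^{0,2}(M)\to H^{0,2}(M)$ from $(\ref{exseq:long_ex_seq_H_cHlimH})$ factors through $\mathrm{ext}_0$ (via $H^2(X, \mathcal{I})\to H^2(X, \mathcal{O}_X)$ induced by $\mathcal{I}\hookrightarrow \mathcal{O}_X$, followed by restriction to $M$, using that the Godement resolution restricted to $M$ computes all three cohomologies compatibly), so $\ker(\mathrm{ext}_0)\subset \ker(H_c^{0,2}(M)\to H^{0,2}(M))=\mathrm{image}(\delta)$; writing $\zeta\in\ker(\mathrm{ext}_0)$ as $\zeta=\delta(\xi')$, commutativity forces $\delta'(\xi')=0$, so by Mayer--Vietoris $\xi'$ differs from the image of some $\xi\in\textstyle\varinjlim_V H^1(V, \mathcal{I})$ by a class coming from $H^1(M, \mathcal{O})$, which $\delta$ annihilates, yielding $\zeta=\phi(\xi)$.

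Once $\mathrm{image}(\phi)=\ker(\mathrm{ext}_0)$ is established, the hypothesis $\dim H_c^{0,2}(M)>\dim H^2(X,\mathcal{I})$ gives $\dim \mathrm{image}(\phi)\ge \dim H_c^{0,2}(M)-\dim H^2(X, \mathcal{I})>0$, producing the desired $\xi$ and hence the desired $L$. For the ``in particular'' clause, the vanishing $H^1(Y, \mathcal{I}^\mu/\mathcal{I}^{\mu+1})=0$ for all $\mu\ge 1$ combined with the inductive argument in Remark \ref{rmk:van_of_N-impliesj^*triv} (applying Lemma \ref{lem:well-def_obstretc}$(i)$ repeatedly) propagates triviality of $(i_0^{\mathcal{I}})^*\mathcal{O}_V(L)$ to triviality of $i_\infty^*\mathcal{O}_V(L)$; together with the non-triviality of $L$ on every neighborhood of $Y$, this shows the FPLB fails. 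The main obstacle I anticipate is the construction of $\mathrm{ext}_0$ and the chain-level verification of the commutative diagram when $\mathcal{I}$ is not locally free, since the naive Dolbeault-type resolution $\mathcal{I}\mathcal{A}^{0,\bullet}$ only resolves $\mathcal{I}$ under flatness and must be replaced by a coarser acyclic resolution with correspondingly rewritten cocycle manipulations; the remainder of the argument is linear-algebraic Mayer--Vietoris bookkeeping.
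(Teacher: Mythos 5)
Your proposal is correct, and its skeleton is the same as the paper's: produce $\xi\in\varinjlim_V H^1(V,\mathcal{I})$ whose image $v(\mathcal{I},L)=\phi(\xi)$ in $H_c^{0,2}(M)$ is nonzero by playing $\dim H_c^{0,2}(M)$ off against $\dim H^2(X,\mathcal{I})$ through the Mayer--Vietoris sequence for the cover $\{M,V\}$, then invoke Proposition \ref{prop:van_v_implies_Ltriv} for non-triviality of $L$ on every neighborhood and the inductive argument of Remark \ref{rmk:van_of_N-impliesj^*triv} for the ``in particular'' clause. Where you genuinely diverge is in how the nonzero element of ${\rm image}(\phi)$ is produced. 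The paper first invokes Malgrange's vanishing $H^{0,2}(M)=0$ (valid since $M$ is a connected non-compact surface --- this is where the surface hypothesis enters) to make the connecting map $\delta\colon\varinjlim_K H^{0,1}(M\setminus K)\to H_c^{0,2}(M)$ surjective, lifts $N+1$ independent classes $v_0,\dots,v_N$, chooses a combination killed by the Mayer--Vietoris map into $H^2(X,\mathcal{I})$, and uses exactness to write it as $-\zeta+\xi$ with the $H^{0,1}(M)$-component dying under $\delta$. You instead introduce the extension-by-zero map ${\rm ext}_0\colon H_c^{0,2}(M)\to H^2(X,\mathcal{I})$ and prove $\ker({\rm ext}_0)\subset{\rm image}(\phi)$ (the only inclusion your dimension count actually needs; the opposite one is decoration), replacing Malgrange by the factorization of $H_c^{0,2}(M)\to H^{0,2}(M)$ through ${\rm ext}_0$. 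This buys a slightly more self-contained and structurally transparent argument --- the role of $H^2(X,\mathcal{I})$ as the exact ``defect'' is made explicit --- at the price of constructing ${\rm ext}_0$ and checking the compatibility square between the connecting map of $(\ref{exseq:long_ex_seq_H_cHlimH})$ and the Mayer--Vietoris connecting map. That square does commute up to a harmless sign: with a cutoff $\chi$ equal to $1$ near $K=X\setminus V$ and compactly supported in $M$, both composites send $[\eta]$ to $\pm[\delbar\chi\wedge\eta]$. Your worry about non-flat $\mathcal{I}$ is legitimate but easily contained: every open set occurring on the left of the square avoids $Y$, where $\mathcal{I}$ restricts to $\mathcal{O}$, so the Dolbeault computations are valid there, and only the target $H^2(X,\mathcal{I})$ requires the abstract ($j_!$/Godement) treatment you indicate --- the paper sidesteps this entirely by never leaving the abstract Mayer--Vietoris sequence.
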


\begin{proof}
Denote by $N$ the dimension of $H^2(X, \mathcal{I})$. 
From the assumption, one can take $N+1$ elements $v_0, v_1, \dots, v_N\in H_c^{0, 2}(M)$ which are linearly independent. 
As $M$ is a connected non-compact surface, it follows that 
$H^{0, 2}(M) = 0$ \cite{M} (see also \cite[\S 1]{D1}). 
Thus we infer from the exactness of $(\ref{exseq:long_ex_seq_H_cHlimH})$ that the natural map $\textstyle\varinjlim_K H^{0, 1}(M\setminus K)
\to H_c^{0, 2}(M)$ is surjective. 
Therefore one can take an element $\eta_\nu\in\textstyle\varinjlim_V H^{0, 1}(V\setminus Y) (=\textstyle\varinjlim_K H^{0, 1}(M\setminus K))$ which is mapped to $v_\nu$ by this map ($\nu=0, 1, \dots, N$). 
As ${\rm dim}\,H^2(X, \mathcal{I})=N$, there exists an element $(c_0, c_1, \dots, c_N)\in \mathbb{C}^{N+1}\setminus \{(0, 0, \dots, 0)\}$ such that $\eta := \textstyle\sum_{\nu=0}^N c_\nu\eta_\nu$ is mapped to zero by the map $\textstyle\varinjlim_V H^{0, 1}(V\setminus Y)\to H^2(X, \mathcal{I})$ which appears in the sequence
\[
H^{0, 1}(M)\oplus \varinjlim_V H^1(V, \mathcal{I})
\to \varinjlim_V H^{0, 1}(V\setminus Y)
\to H^2(X, \mathcal{I})
\]
obtained by the same argument as in the proof of Proposition \ref{prop:simple_thmmaini}. 
From the exactness of this sequence, we infer that there exists an element $\zeta \in H^{0, 1}(M)$ and $\xi\in \textstyle\varinjlim_V H^1(V, \mathcal{I})$ such that (the restriction of) $-\zeta+\xi$ coincides with $\eta$. By construction it follows that the image of $\xi$ by the composition of the natural maps 
\[
\varinjlim_V H^1(V, \mathcal{I}) \to \varinjlim_V H^{0, 1}(V\setminus Y)\to H_c^{0, 2}(M)
\]
is $v := \textstyle\sum_{\nu=0}^N c_\nu v_\nu$. 
Take a holomorphic line bundle $L$ on a neighborhood of $Y$ which represents the element $\alpha_{\mathcal{I}, 1}(\xi)\in \textstyle\varinjlim_V H^1(V, \mathcal{O}_V^*)$: i.e., $[L]_Y=\alpha_{\mathcal{I}, 1}(\xi)$. 
As $v(\mathcal{I}, L)=v$ holds by construction and $v\not=0$ holds by the linear independence of $v_\nu$'s, the former half of the assertion follows from Lemma \ref{lem:iikae_Ltriv_fund} $(i)$ and 
Proposition \ref{prop:van_v_implies_Ltriv}. 
The latter half of the assertion follows from the former half by an inductive application of Lemma \ref{lem:well-def_obstretc} $(i)$ (see Remark \ref{rmk:van_of_N-impliesj^*triv}). 
\end{proof}

It should to be natural to ask the following, since some parts of the argument in the proof of Proposition \ref{prop:cor3} seem to be also applicable to $(Y, X)$ as in 
Example \ref{eg:toroidal_ruled} (See also \cite[\S 1, \S 7.1]{K20242}). 
\begin{question}
Can one show an analogue of Proposition \ref{prop:cor3} for $(Y, X)$ as in 
Example \ref{eg:toroidal_ruled}? 
\end{question}

\begin{proof}[{Proof of Theorem \ref{thm:exp2bl9pts}}]
Let $C_0\subset \mathbb{P}^2$ be a non-singular cubic curve, 
$\pi_1\colon X_1 \to \mathbb{P}^2$ be the blow-up at a point of $C_0$, 
and $\pi_\nu\colon X_\nu \to X_{\nu-1}$ be the blow-up of $X_{\nu-1}$ at a point of the strict transform $C_{\nu-1}$ of $C_{\nu-2}$ for $\nu=2, 3, \dots, 8$. Denote by $C_8$ the strict transform of $C_7$. 
Let $\mathcal{X}$ be the blow-up of $X_8\times C_8$ along the analytic subset 
$\{(t, t) \mid t\in C_8\}\subset X_8\times C_8$, 
$\mathcal{Y}$ the strict transform of $C_8\times C_8$, 
and $\pi\colon \mathcal{X}\to R:=C_8$ be the composition of the blow-up morphism and the second projection $X_8\times C_8\to C_8$. 
Note that $X_t:=\pi^{-1}(t)$ (resp. $Y_t:=X_t\cap \mathcal{Y}$) coincides with $X$ (resp. $Y$) in Example \ref{ex:blP29pts} when one choose $t\in C_8$ as the center of the ninth blow-up. 
By construction, $X_t$ and $Y_t$ clearly satisfy the condition $(i)$. 
For almost every $t\in R$, $N_{Y_t/X_t}$ satisfies the Diophantine condition (see \cite[Remark 1.4]{KUK3} for example). 
As $K_{X_t}^{-1}$ is semi-positive in this case (see Example \ref{ex:blP29pts}), the assertion $(ii)$ follows from \cite[Theorem 1.1]{K20242} and Proposition \ref{prop:cor3}. 
The assertion $(iii)$ follows from Proposition \ref{prop:cor3}, \cite[Corollary 1.3]{K20242}, and \cite[p. 155 Satz I]{C}. 
\end{proof}





\end{document}